\documentclass[reqno]{amsart}

\usepackage[all]{xy}
\usepackage{graphicx}

\usepackage{amssymb}
\usepackage{amsmath}
\usepackage{mathrsfs}
\usepackage{epsfig}
\usepackage{amscd}
\usepackage{graphicx, color}

%


\def\E{\ifmmode{\mathbb E}\else{$\mathbb E$}\fi} 
\def\N{\ifmmode{\mathbb N}\else{$\mathbb N$}\fi} 
\def\R{\ifmmode{\mathbb R}\else{$\mathbb R$}\fi} 
\def\Q{\ifmmode{\mathbb Q}\else{$\mathbb Q$}\fi} 
\def\C{\ifmmode{\mathbb C}\else{$\mathbb C$}\fi} 
\def\H{\ifmmode{\mathbb H}\else{$\mathbb H$}\fi} 
\def\Z{\ifmmode{\mathbb Z}\else{$\mathbb Z$}\fi} 
\def\P{\ifmmode{\mathbb P}\else{$\mathbb P$}\fi} 
\def\T{\ifmmode{\mathbb T}\else{$\mathbb T$}\fi} 
\def\SS{\ifmmode{\mathbb S}\else{$\mathbb S$}\fi} 
\def\DD{\ifmmode{\mathbb D}\else{$\mathbb D$}\fi} 

\newcommand{\del}{\partial}
\newcommand{\Cont}{{\operatorname{Cont}}}

\newcommand{\Fred}{{\operatorname{Fred}}}

\newcommand{\ben}{\begin{enumerate}}
\newcommand{\een}{\end{enumerate}}
\newcommand{\be}{\begin{equation}}
\newcommand{\ee}{\end{equation}}
\newcommand{\bea}{\begin{eqnarray}}
\newcommand{\eea}{\end{eqnarray}}
\newcommand{\beastar}{\begin{eqnarray*}}
\newcommand{\eeastar}{\end{eqnarray*}}
\newcommand{\bc}{\begin{center}}
\newcommand{\ec}{\end{center}}

\theoremstyle{theorem}
\newtheorem{thm}{Theorem}[section]
\newtheorem{cor}[thm]{Corollary}
\newtheorem{lem}[thm]{Lemma}
\newtheorem{prop}[thm]{Proposition}

\theoremstyle{definition}
\newtheorem{defn}[thm]{Definition}
\newtheorem{rem}[thm]{Remark}

\newtheorem{hypo}[thm]{Hypothesis}

\newtheorem{notation}[thm]{\rm\bfseries{Notation}}

\newtheorem*{thm*}{Theorem}

\numberwithin{equation}{section}

\hsize=5.0truein \hoffset=.25truein \vsize=8.375truein
\voffset=.15truein
\def\R{{\mathbb R}}

\def\Crit{{\hbox{Crit}}}

\def\E{{\mathbb E}}
\def\Z{{\mathbb Z}}
\def\C{{\mathbb C}}
\def\R{{\mathbb R}}
\def\P{{\mathbb P}}

\def\N{{\mathbb N}}

\def\11{{\mathbb I}}

\def\delbar{{\overline \partial}}

\def\C{\mathbb{C}}
\def\Z{\mathbb{Z}}

\def\T{\mathbb{T}}

\def\Q{\mathbb{Q}}

\def\E{\ifmmode{\mathbb E}\else{$\mathbb E$}\fi} 
\def\N{\ifmmode{\mathbb N}\else{$\mathbb N$}\fi} 
\def\R{\ifmmode{\mathbb R}\else{$\mathbb R$}\fi} 
\def\Q{\ifmmode{\mathbb Q}\else{$\mathbb Q$}\fi} 
\def\C{\ifmmode{\mathbb C}\else{$\mathbb C$}\fi} 
\def\H{\ifmmode{\mathbb H}\else{$\mathbb H$}\fi} 
\def\Z{\ifmmode{\mathbb Z}\else{$\mathbb Z$}\fi} 
\def\P{\ifmmode{\mathbb P}\else{$\mathbb P$}\fi} 
\def\SS{\ifmmode{\mathbb S}\else{$\mathbb S$}\fi} 
\def\DD{\ifmmode{\mathbb D}\else{$\mathbb D$}\fi} 

\def\R{{\mathbb R}}

\def\Crit{{\hbox{Crit}}}
\def\E{{\mathbb E}}
\def\Z{{\mathbb Z}}
\def\C{{\mathbb C}}
\def\R{{\mathbb R}}

\def\N{{\mathbb N}}
\def\LL{{\mathcal L}}
\def\CC{{\mathcal C}}

\def\delbar{{\overline \partial}}

%
%

\def\CB{{\mathcal B}}
\def\CC{{\mathcal C}}

\def\CH{{\mathcal H}}

\def\CJ{{\mathcal J}}

\def\CL{{\mathcal L}}
\def\CM{{\mathcal M}}

\def\CV{{\mathcal V}}
\def\CW{{\mathcal W}}

%

%



\def\darr#1{\raise1.5ex\hbox{$\leftrightarrow$}
\mkern-16.5mu #1}

\def\roughly#1{\raise.3ex\hbox{$#1$\kern-.75em
\lower1ex\hbox{$\sim$}}}

\def\opname#1{\mathop{\kern0pt{\rm #1}}\nolimits}

\def\Im{\opname{Im}}
\def\End{\opname{End}}
\def\dim{\opname{dim}}

\def\End{\operatorname{End}}

\def\span{\operatorname{span}}

\def\Cont{\operatorname{Cont}}
\def\Crit{\operatorname{Crit}}
\def\Spec{\operatorname{Spec}}
\def\Sing{\operatorname{Sing}}
\def\GFQI{\mathfrak{G}}
\def\Index{\operatorname{Index}}

\def\Image{\operatorname{Image}}

\def\Int{\operatorname{Int}}

\begin{document}

\quad \vskip1.375truein

\def\mq{\mathfrak{q}}
\def\mp{\mathfrak{p}}
\def\mH{\mathfrak{H}}
\def\mh{\mathfrak{h}}
\def\ma{\mathfrak{a}}
\def\ms{\mathfrak{s}}
\def\mm{\mathfrak{m}}
\def\mn{\mathfrak{n}}
\def\mz{\mathfrak{z}}
\def\mw{\mathfrak{w}}
\def\Hoch{{\tt Hoch}}
\def\mt{\mathfrak{t}}
\def\ml{\mathfrak{l}}
\def\mT{\mathfrak{T}}
\def\mL{\mathfrak{L}}
\def\mg{\mathfrak{g}}
\def\md{\mathfrak{d}}
\def\mr{\mathfrak{r}}
\def\Cont{\operatorname{Cont}}
\def\Crit{\operatorname{Crit}}
\def\Spec{\operatorname{Spec}}
\def\Sing{\operatorname{Sing}}
\def\GFQI{\text{\rm GFQI}}
\def\Index{\operatorname{Index}}
\def\Cross{\operatorname{Cross}}
\def\Ham{\operatorname{Ham}}
\def\Fix{\operatorname{Fix}}
\def\Graph{\operatorname{Graph}}
\def\ric{\operatorname{Ric}}
\def\lcs{\text{\rm lcs}}
\def\Sym{\text{\rm Sym}}
\def\Char{\text{\rm Char}}

\title[Asymptotic operators]
{Perturbation theory of asymptotic operators of contact instantons and
pseudoholomorphic curves on symplectization}
\author{Taesu Kim}
\address{Department of Mathematics,  
 POSTECH,  77 Cheongam-ro, Nam-gu, Pohang-si, Gyeongbuk, Korea 37673}
 \email{kimtaesu@postech.ac.kr}
\author{Yong-Geun Oh}
\address{Center for Geometry and Physics,  Institute for Basic Science (IBS), 
79 Jigok-ro 127beon-gil, Nam-gu, Pohang, Gyeongbuk, KOREA 37673 \&
Department of Mathematics, POSTECH,
77 Cheongam-ro, Nam-gu, Pohang-si, Gyeongbuk, Korea 37673}
\email{yongoh1@postech.ac.kr}

\thanks{This work is supported by the IBS project \# IBS-R003-D1.
Both authors would like to also acknowledge MATRIX and the Simons Foundation for their support and funding through the MATRIX-Simons Collaborative Fund of the IBS-CGP and MATRIX workshop on Symplectic Topology.
}


\begin{abstract} In this paper, we first derive covariant tensorial formula
for the asymptotic operators of contact instantons $w:\dot \Sigma \to Q$
and of pseudoholomorphic curves
$u : (\dot \Sigma,j) \to (Q \times \R, \widetilde J)$
on the symplectization of contact manifold $(Q,\lambda)$. The formula 
is independent of the choice of connection \emph{on the nose} and
exhibits explicit dependence on the compatible pair $(\lambda,J)$ of
given contact triad $(Q,\lambda, J)$. Then based on
this, we prove that all eigenvalues of the asymptotic operator 
are simple \emph{for a generic choice of compatible CR almost complex structure $J$} 
for given contact form $\lambda$. This perturbation theory has been missing in 
the study of pseudoholomorphic curves on symplectization.
\end{abstract}

\keywords{Contact manifolds, Legendrian submanifolds, 
contact triad connection,  contact instantons, symplectization,  almost
Hermitian manifold,  canonical connection,
pseudoholomorphic curves on symplectization, asymptotic operator, spectral flow}
\subjclass[2010]{Primary 53D42; Secondary 58J32}

\maketitle

\tableofcontents

\section{Introduction and overview}

Let $(Q, \xi)$ be a contact manifold. Assume $\xi$ is coorientable.
Then we can choose a contact form $\lambda$ with $\ker \lambda = \xi$.
With $\lambda$ given, we have the Reeb vector field $R_\lambda$
uniquely determined by the equation $R_\lambda \rfloor d\lambda = 0, \, R_\lambda \rfloor \lambda = 1$.
Then we have decomposition $TQ = \xi \oplus \R \{R_\lambda\}$. We denote by $\pi: TQ \to \xi$ the associated projection and
$\Pi = \Pi_\lambda: TQ \to TQ$ the associated idempotent whose image
is $\xi$.

A \emph{contact triad} is a triple $(Q,\lambda, J)$
where $\lambda$ is a contact form of $\xi$, i.e., $\ker \lambda = \xi$ and
$J$ is an endomorphism $J: TQ \to TQ$ that satisfies the following.

\begin{defn}[CR almost complex structure]
\label{defn:adapted-J} Let $(Q,\xi = \ker \lambda)$ be as above.
A \emph{CR almost complex structure $J$} is an endomorphism
$J: TQ \to TQ$ satisfying $J^2 = - \Pi$, or more explicitly
$$
(J|_\xi)^2 = - id|_\xi, \quad J(R_\lambda) = 0.
$$
\begin{enumerate}
\item We say $J$ is \emph{adapted to}
$\lambda$ if $d\lambda(Y, J Y) \geq 0$
for all $Y \in \xi$ with equality only when $Y = 0$. In this case, we call the pair $(\lambda, J)$ an \emph{adapted pair} of the contact manifold $(Q,\xi)$.
\item We say $J$ is \emph{compatible to} $\lambda$ if the bilinear form $d\lambda(X,JY)$
is symmetric on $\xi$ in addition to (1). In that case, we call the pair $(\lambda,J)$ a
compatible pair of $(Q,\xi)$.
\end{enumerate}
\end{defn}
The associated contact triad metric for a $\lambda$-compatible pair $(\lambda, J)$ is given by
\be\label{eq:triad-metric}
g_\lambda := d\lambda(\cdot, J\cdot) +\lambda\otimes\lambda.
\ee
The symplectization of $(Q,\lambda)$ is the symplectic manifold $(Q \times \R, d(e^s\lambda))$ with $\R$-coordinate
$s$ also called the \emph{(cylindrical) radial coordinate}.
We equip the symplectization with the $s$-translation invariant almost complex structure
$$
\widetilde J = J \oplus J_0
$$
where $J_0$ is the almost
complex structure on the plane $\R \{\frac{\del}{\del s}, R_\lambda\}$ satisfying $J_0(\frac{\del}{\del s}) = R_\lambda$.

On the other hand, in
the joint work \cite{oh-savelyev} by Savelyev and the second-named 
author, they lifted the theory of contact instantons to
the theory of pseudoholomorphic curves on the $\mathfrak{lcs}$-fication
$(Q \times S^1_\rho, \omega_\lambda)$, Banyaga's locally conformal symplectification (which they call the $\mathfrak{lcs}$-fication) of
contact manifold $(Q,\lambda)$ \cite{banyaga:lcs} on
\be\label{eq:banyaga-lcs}
(Q \times S^1_\rho, d\lambda + d\theta \wedge \lambda)
\ee
with the canonical angular form $d\theta$ satisfying $\int_{S^1_\rho} d\theta = 1$. According to the terminology adopted in \cite{oh-savelyev},
the authors call them the $\mathfrak{lcs}$-fication `of nonzero temperature' 
on which the theory of pseudoholomorphic curves is
developed. Here `$\mathfrak{lcs}$' stands for the standard abbreviation of `locally conformal symplectic'. The authors of ibid. 
call the relevant pseudoholomorphic curves \emph{lcs instantons}.
This family can be augmented by  including the case of the product $Q \times \R$ as the `zero temperature limit' with $1/\rho \to 0$,
\be\label{eq:lcsfication}
(Q \times \R, \omega_\lambda), \quad  \omega_\lambda : = d\pi_Q^*\lambda + ds \otimes \pi_Q^*\lambda
\ee
in physical terms. 
(We refer interested readers to many physical literature for various physical discussion 
involving consideration of such a limit. Here, we just take one such example,  the discussion 
in  \cite[Section 4.1]{FFN-07}.)
Here we denote by $\rho$ the radius of the circle $S^1$ and by $\pi= \pi_Q$ (resp. $s$) the projection to $Q$ (resp. to $\R$) of $Q \times \R$.
For the simplicity of notation, we will often just write $\lambda$ for $\pi_Q^*\lambda$ on $Q \times \R$.

The tensorial approach also clarifies the relationship between
the background geometries of
the contact triad $(Q,\lambda,J)$, the symplectization
$$
(SQ, d(e^s\lambda)) = (Q \times \R, e^s \omega_\lambda)
$$
and the lcs manifold \eqref{eq:banyaga-lcs}.
We write $M: = SQ$ and consider the decomposition
\be\label{eq:TM-splitting}
TM \cong TQ  \oplus \R\cdot \frac{\del}{\del s} \cong \xi \oplus \span\left\{\widetilde R_\lambda,
\frac{\del}{\del s}\right\}  \cong \xi \oplus \R^2.
\ee
We denote by $\widetilde R_\lambda$ the unique vector field on $[0,\infty) \times Q$ which is
invariant under the translation, tangent to the level sets of $s$
and projected to $R_\lambda$. When there is no danger of confusion, we will
mostly denote it by $R_\lambda$.
For give contact triad $(Q,\lambda, J)$, 
we have a canonical almost complex structure
$$
J_0: \R\left\{\frac{\del}{\del s},R_\lambda\right\} \to
\R \left\{\frac{\del}{\del s}, R_\lambda \right\}
$$
defined by $J_0 \frac{\del}{\del s} = R_\lambda$ thanks to the splitting \eqref{eq:TM-splitting}.

Any smooth map
$u:\dot \Sigma \to Q \times \R$ has the form $u = (w,f)$ with
\be\label{eq:a-s}
f = s \circ u, \, w = \pi \circ u
\ee
in the presence of contact form $\lambda$ on $(Q,\xi)$.
We have the decomposition of the derivative
$$
du = dw \oplus \left(df \otimes \frac{\del}{\del s}\right)
$$
viewed as a $TM$-valued one-form 
with respect to the splitting
$$
\operatorname{Hom}(T_z\dot \Sigma, T_{u(z)}M)
= \operatorname{Hom}(T_z\dot \Sigma, HT_{u(z)}M)
\oplus \operatorname{Hom}(T_z\dot \Sigma, VT_{u(z)}M).
$$
(For the notational simplicity, we often omit `$\otimes$' except
the situation that could cause confusion to the readers without it.)

The main purpose of the present paper is to
continue the second-named author and his collaborators'
covariant tensorial study of contact instantons and of the pseudoholomorphic curves on symplectization
given in \cite{oh-wang:CR-map1,oh-wang:CR-map2,oh:contacton-Legendrian-bdy,oh-yso:index}
and to carry out precise asymptotic analyses near the punctures
of finite energy contact instantons and of finite energy pseudoholomorphic curves
by developing a generic perturbation theory
of asymptotic operators over the change of compatible
CR-almost complex structures.

\subsection{Pseudoholomorphic curves in symplectization and contact instantons}

By definition, we have
$
d\pi_Q du = dw.
$
It was observed by Hofer \cite{hofer:invent} that $u$ is $\widetilde J$-holomorphic if and only if $(w,f)$ satisfies
\be\label{eq:tildeJ-holo}
\begin{cases}
\delbar^\pi w = 0 \\
w^*\lambda \circ j= df.
\end{cases}
\ee
(We refer to Appendix \ref{sec:covariant-differential} for the here unexplained notation $\delbar^\pi$.)
A contact instanton is a map $w: \dot \Sigma \to M$ that satisfies
the system of nonlinear partial differential equation
\be\label{eq:contacton-intro}
\delbar^\pi w = 0, \quad d(w^*\lambda \circ j) = 0
\ee
on a contact triad $(M,\lambda, J)$.  The equation itself had been 
introduced by Hofer \cite[p.698]{hofer:gafa}. Note that for any map $u = (w,f)$ satisfying \eqref{eq:tildeJ-holo}
$w$ satisfies this equation with the additional property that the one-form $w^*\lambda \circ j$ is \emph{exact}, not just closed.

In a series of papers, \cite{oh-wang:CR-map1,oh-wang:CR-map2} 
jointed with Wang and in \cite{oh:contacton}, the second-named author systematically
developed analysis of contact instantons (for the closed string case)
\emph{without taking symplectization} by the global covariant tensorial
calculations using the notion of \emph{contact triad connection} 
 introduced by Wang and the second-named author in \cite{oh-wang:connection}.
 A relevant Fredholm theory has been developed by the second-named
 author in \cite{oh:contacton}, \cite{oh-savelyev} (for the closed string case).
More recently the second-named author also studied its open string counterpart of
the boundary value problem of \eqref{eq:contacton-intro} under the Legendrian boundary condition
whose explanation is now in order.  We mention that in early 2000's Abbas  studied the 
Legendrian chord  problem in \cite{abbas:chord} in early 2000's, and more recently
Cant \cite{cant:thesis} developed a detailed Fredholm theory on symplectization
in the relative context.

Throughout the paper, we adopt the following notations.
\begin{notation} We denote by $(\Sigma,j)$ a closed Riemann surface,
$\dot \Sigma$ the associated punctured Riemann surface and
$\overline \Sigma$ the real blow-up of $\dot \Sigma$ along the
punctures.
\end{notation}
For the simplicity and
for the main purpose of the present paper, we will focus on the genus zero case 
so that $\dot \Sigma$ is conformally the unit disc with boundary
punctures $z_0, \ldots, z_k \in \del D^2$ ordered
counterclockwise, i.e.,
$$
\dot \Sigma \cong D^2 \setminus \{z_0, \ldots, z_k\}
$$
Then, for a  $(k+1)$-tuple $\vec R = (R_0,R_1, \cdots, R_k)$ of Legendrian submanifolds, which we call
an (ordered) Legendrian link, we consider
the boundary value problem
\be\label{eq:contacton-Legendrian-bdy-intro}
\begin{cases}
\delbar^\pi w = 0, \quad d(w^*\lambda \circ j) = 0,\\
w(\overline{z_iz_{i+1}}) \subset R_i
\end{cases}
\ee
as an elliptic boundary value problem for a map $w: \dot \Sigma \to M$
and derive the a priori coercive elliptic estimates. Here $\overline{z_iz_{i+1}} \subset \del D^2$
is the open arc between $z_i$ and $z_{i+1}$.

\begin{rem}
Oh also identified the correct 
counterpart of \emph{Hamiltonian-perturbed contact instantons}
 in \cite{oh:contacton-Legendrian-bdy}, \cite{oh:perturbed-contacton-bdy} 
and applied them to a systematic quantitative study of  contact topology
\cite{oh:entanglement1}, and  prove Shelukhin's conjecture  \cite{oh:shelukhin-conjecture}.
\end{rem}

Let $\widetilde \nabla: = \nabla^{\text{\rm can}}$
be the canonical connection of
this almost Hermitian manifold
\be\label{eq:almost-Hermitian-triple}
(Q \times \R, \widetilde g_\lambda, \widetilde J), \quad  \widetilde g_\lambda : = g_\lambda + ds \otimes ds
\ee
i.e., the unique Riemannian connection whose torsion
$T$ satisfies $T(X,\widetilde J X) = 0$ for all $X \in T(Q \times \R)$
(See \cite{gauduchon,kobayashi,oh:book1} for its definition and basic properties.) We note that this almost Hermitian structure on $Q\times \R$
is translational invariant in the radial $s$-direction.
The usage of the canonical connection
on the $\mathfrak{lcs}$-fication $(Q\times \R, \widetilde J, \omega_\lambda)$
(or equivalently via contact triad connection on the triad $(Q,\lambda,J)$
plays an important role in the present authors'
analysis of the \emph{asymptotic operators} of finite energy contact instantons
in Section \ref{sec:spectral-perturbation},
and hence of finite energy pseudoholomorphic curves too. Roughly speaking,
our  coordinate-free approach enables us to compute the asymptotic operator associated to
 each \emph{isospeed Reeb orbit} $(\gamma,T)$ of a contact instanton $w$ denoted by
$$
A^\pi_{(\lambda,J,\nabla)}: \Gamma(\gamma^*\xi) \to \Gamma(\gamma^*\xi)
$$
\emph{simultaneously} over all isospeed Reeb orbits in the covariant tensorial way
\emph{in terms of the pull-back connection under the map $w$} of the
 \emph{given}  contact triad connection 
or the Levi-Civita of the triad metric of the given triad $(Q,\lambda,J)$.
(See Definition \ref{defn:asymptotic-operator} for the precise definition of
the operator $A^\pi_{(\lambda,J,\nabla)}$.)

\begin{defn}[Isospeed Reeb trajectory] We call a pair $(\gamma, T)$ an \emph{isospeed Reeb trajectory}
if $\gamma: [0,1] \to Q$ and $T = \int \gamma^*\lambda$ satisfy
\be\label{eq:isospeed}
\dot \gamma(t) = T R_\lambda(\gamma(t)).
\ee
If $\gamma(1) = \gamma(0)$ in addition, we call it an isospeed Reeb orbit.
\end{defn}

\subsection{Asymptotic operators and their analysis}
\label{subsec:admissiblepair}

We first mention a few differences between the way how we study the asymptotic operators and
those of \cite{HWZ:asymptotics} and of other literature such as \cite[Appendix C]{robbin-salamon:asymptotic}, 
\cite{siefring:relative,siefring:intersection}, \cite{wendl:lecture},
\cite{cant:thesis}.

In \cite[Appendix E]{robbin-salamon:asymptotic},  \cite{siefring:relative,siefring:intersection},
\cite[Section 3.3]{wendl:lecture},
there have been attempts to give a coordinate-free definition
of the asymptotic operator along the
associated asymptotic Reeb orbit for a pseudoholomorphic
curve $u = (w,f)$ on symplectization.
However they fall short of a seamless definition of the
`asymptotic operator' of the Reeb orbits
because the Reeb orbit lives on $Q$ while the pseudoholomorphic curves live on the product $Q \times \R$
and the asymptotic limit of pseudoholomorphic curve live at infinity $Q \times \{\pm \infty\}$.
(Cant studied the asymptotic operator in the relative context in \cite[Section 6.3]{cant:thesis}
by adapting Wendl's.)

 What these literature (e.g. \cite[Section 3.3]{wendl:lecture}, \cite{siefring:relative,siefring:intersection}) 
 are describing, however,
is actually the asymptotic operator of the contact instanton $w$ but trying to describe it in terms of
the pseudoholomorphic curves which prevents them from being able to give
a seamless definition:  Recall the decomposition
\be\label{eq:dtildew}
du = d^\pi w \oplus (w^*\lambda \otimes R_\lambda) \oplus
\left(df \otimes \frac{\del}{\del s}\right)
\ee
with respect to the splitting \eqref{eq:TM-splitting}.
(Compare their practices with our definition of the asymptotic operator of contact instantons given
in Definition \ref{defn:asymptotic-operator} and compare it therewith. 
See also \cite[Section 11.2 \& 11.5]{oh-wang:CR-map2}
for the precursor of our definition.)

In this regard, we will derive an explicit formula of the
asymptotic operator in terms of the contact triad connection,
which exhibits the way how the operator explicitly depends on
$J$ so it admits a perturbation theory under the change of $J$.
\begin{prop}[Proposition \ref{prop:A}]
\label{prop:A-intro}Let $\nabla$ be the triad connection, and $(\gamma, T)$ be
an isospeed Reeb orit. Then
the asymptotic operator $A^\pi_{(\lambda,J,\nabla)}$ is given by
$$
A^\pi_{(\lambda,J,\nabla)} = - J\nabla_t - \frac{T}{2} J \CL_{R_\lambda}J.
$$
\end{prop}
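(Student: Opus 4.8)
The plan is to unwind Definition~\ref{defn:asymptotic-operator}: along the cylindrical end $[0,\infty)\times S^1$ adapted to the puncture, the limiting contact instanton over the isospeed Reeb orbit $(\gamma,T)$ is the $\tau$-independent cylinder $w_\infty(\tau,t)=\gamma(t)$, and $A^\pi_{(\lambda,J,\nabla)}$ is the first- and zeroth-order operator on $\Gamma(\gamma^*\xi)$ obtained by writing the linearization of the $\pi$-component $\delbar^\pi w=0$ at $w_\infty$, expressed through the pull-back connection $w_\infty^*\nabla$, in the normal form $\del_\tau - A^\pi_{(\lambda,J,\nabla)}$. So I would first write the $\pi$-component of the equation in a cylindrical coordinate as $\pi\,\del_\tau w + J(w)\,\pi\,\del_t w = 0$, choose a path $w_u$ of maps with $w_0 = w_\infty$ and $\del_u w_u|_{u=0}=\zeta\in\Gamma(\gamma^*\xi)$, and differentiate at $u=0$ via $w_\infty^*\nabla$. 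The structural inputs are $\nabla\lambda = 0$ (so $\nabla$ preserves $\xi$ and $\pi$ is $\nabla$-parallel), the commutation identity $\tfrac{D}{du}\del_a w - \tfrac{D}{da}\del_u w = T_\nabla(\del_u w,\del_a w)$ for the torsion $T_\nabla$, and the equalities $\del_\tau w_\infty = 0$, $\del_t w_\infty = \dot\gamma = T R_\lambda$. Then the $\del_\tau$-slot of the linearization gives exactly $\del_\tau\zeta$, since $\del_\tau w_\infty = 0$ kills the torsion correction there, and the $J$-slot gives $J\nabla_t\zeta + TJ\,\pi\,T_\nabla(\zeta,R_\lambda)$, since $\pi\,\del_t w_\infty = \pi\dot\gamma = 0$ kills the $(\nabla_\zeta J)$-contribution. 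Hence $A^\pi_{(\lambda,J,\nabla)}\zeta = -J\nabla_t\zeta - TJ\,\pi\,T_\nabla(\zeta,R_\lambda)$, so the whole proposition reduces to identifying the torsion term.

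The second step is to feed in the structure theory of the contact triad connection from \cite{oh-wang:connection}. The normalization of $\nabla$ in the Reeb direction is chosen precisely so that the mixed torsion $T_\nabla(R_\lambda,\cdot)|_\xi$, together with $\nabla_{R_\lambda}R_\lambda = 0$, $\nabla_Y R_\lambda\in\xi$ and the Hermitian identity $\nabla^\pi J = 0$ on $\xi$, are all expressed through the single tensor $\CL_{R_\lambda}J$. Substituting the resulting formula for $\pi\,T_\nabla(\zeta,R_\lambda)$ into $TJ\,\pi\,T_\nabla(\zeta,R_\lambda)$ and simplifying with $J^2 = -\operatorname{id}_\xi$ and the anticommutation $(\CL_{R_\lambda}J)J = -J(\CL_{R_\lambda}J)$ (which holds because $J^2$ is constant on $\xi$) should collapse the torsion term to $\tfrac{T}{2}J(\CL_{R_\lambda}J)\zeta$. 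Combining with the first-order part then yields $A^\pi_{(\lambda,J,\nabla)} = -J\nabla_t - \tfrac{T}{2}J\,\CL_{R_\lambda}J$. I would also check at the end that the resulting operator is genuinely $\xi$-valued and independent of the chosen cylindrical chart and of the parametrization of $\gamma$, which again uses $\nabla R_\lambda\in\xi$ and $T_\nabla(R_\lambda,R_\lambda) = 0$, and --- as a consistency check --- that $\tfrac{T}{2}J\,\CL_{R_\lambda}J$ is symmetric for the triad metric on $\xi$, so that $A^\pi_{(\lambda,J,\nabla)}$ is self-adjoint.

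The main obstacle is this second step: threading the several pieces of the contact triad connection's defining data --- the formula for $\nabla R_\lambda$ along $\xi$, the mixed torsion $T_\nabla(R_\lambda,\cdot)$, and the Hermitian condition $\nabla^\pi J = 0$ --- together with the scaling $T$ coming from $\dot\gamma = T R_\lambda$ and the several sign conventions (the orientation of $j$ on the cylinder, and the sign in the normal form $\del_\tau - A$ versus $\del_\tau + A$). It is only with the precise normalization of $\nabla$ from \cite{oh-wang:connection} that the zeroth-order contributions conspire into the single clean term $-\tfrac{T}{2}J\,\CL_{R_\lambda}J$ rather than a bulkier expression, and obtaining exactly that collapse is the delicate part; a subsidiary point is to make sure the definition of $A^\pi_{(\lambda,J,\nabla)}$ in Definition~\ref{defn:asymptotic-operator} does produce the first-order term with the sign $-J\nabla_t$ claimed here.
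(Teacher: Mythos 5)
Your overall strategy---linearizing $\pi\,\del_\tau w + J\,\pi\,\del_t w=0$ directly at the limiting orbit cylinder $w_\infty(\tau,t)=\gamma(t)$ and reading $A^\pi_{(\lambda,J,\nabla)}$ off the normal form---is viable and is essentially a compressed version of what the paper does: it evaluates the already-derived linearization formula of Theorem \ref{thm:linearization} against $\del/\del\tau$, and then passes to the $\tau\to\pm\infty$ limit of the coefficient tensors using $w^*\lambda\to T\,dt$ and $d^\pi w\to 0$ (your identification of the limit in Definition \ref{defn:asymptotic-operator} with the linearization at $w_\infty$ is harmless, given the established exponential $C^\infty$ convergence). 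The problem is the second step of your plan: the place you expect the zeroth-order term to come from is wrong, and with your stated structural inputs the computation would return $A^\pi_{(\lambda,J,\nabla)}=-J\nabla_t$ with no zeroth-order term at all.

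Concretely: (i) the contact triad connection does \emph{not} satisfy $\nabla\lambda=0$, and neither $\xi$ nor the idempotent $\Pi$ is $\nabla$-parallel. From $\nabla g=0$ and \eqref{eq:nablaYRlambda} one gets $(\nabla_X\lambda)(Y)=g(Y,\nabla_X R_\lambda)=\tfrac12\,g\bigl(Y,(\CL_{R_\lambda}J)JX\bigr)$, which vanishes identically only in the K-contact case $\CL_{R_\lambda}J=0$; this is exactly why the paper must project and work with $\nabla^\pi=\pi\nabla|_\xi$. (ii) The mixed torsion you plan to harvest is identically zero: axiom (2) of Theorem \ref{thm:connection} states $T(R_\lambda,\cdot)=0$, so $\pi\,T_\nabla(\zeta,R_\lambda)=0$; indeed this very axiom is what the paper uses to show the torsion contribution $T^{\pi,(0,1)}_{dw}(\del/\del\tau)$ disappears in the limit. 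So under your hypotheses both of your candidate zeroth-order contributions vanish. The true source of the zeroth-order term is precisely the non-parallelism you assumed away: when you differentiate the $J$-slot at $w_\infty$ you must keep the contribution of the variation of the Reeb/projection data, e.g. $(\nabla_\zeta J)(\del_t w_\infty)=(\nabla_\zeta J)(TR_\lambda)=-T\,J\nabla_\zeta R_\lambda=-\tfrac{T}{2}J(\CL_{R_\lambda}J)J\zeta$ by \eqref{eq:nablaYRlambda} (equivalently, the term $(\nabla_\zeta\Pi)(TR_\lambda)$ if you write the slot as $J\pi\,\del_t w$); in the paper this is packaged as the zeroth-order operator $B(Y)=-\tfrac12\,w^*\lambda\,\bigl((\CL_{R_\lambda}J)JY\bigr)$ of Theorem \ref{thm:linearization}, whose $(0,1)$-part evaluated on $\del/\del\tau$ produces the $\tfrac{T}{2}$-term of Proposition \ref{prop:A} after $w^*\lambda\to T\,dt$. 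To repair your argument, delete the input ``$\nabla\lambda=0$'', retain the derivative of $R_\lambda$ (equivalently of $\Pi$ or of $J$ in the Reeb direction), and discard the torsion term; the claimed formula then follows, up to the reshuffling given by the anticommutation $(\CL_{R_\lambda}J)J=-J(\CL_{R_\lambda}J)$ which the paper uses to pass between the two equivalent forms of the statement.
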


Combining this with the basic properties of the triad connection
and $\dot \gamma(t) = T R_\lambda(\gamma(t))$, we can derive an explicit expression of
the asymptotic operator purely in terms of $\lambda$, $J$ and $(\gamma,T)$
independent of the choice of the connection.

\begin{thm}[Theorem \ref{thm:A=T}]
\label{thm:A=T-intro} Let $(Q,\lambda,J)$ be a triad. Let $(\gamma, T)$ with $T \neq 0$ be an
iso-speed Reeb trajectory. Then we have the formula
\be\label{eq:A-in-CL2-intro}
A^\pi_{(\lambda,J,\nabla)} = T\left(-\frac12 \CL_{R_\lambda}J -  J \CL_{R_\lambda}
- \frac{1}{2} J(\CL_{R_\lambda}J)\right)
\ee
when acted upon $\Gamma(\gamma^*TM)$.
\end{thm}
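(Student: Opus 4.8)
The plan is to start from the formula $A^\pi_{(\lambda,J,\nabla)} = -J\nabla_t - \frac{T}{2}J\CL_{R_\lambda}J$ of Proposition \ref{prop:A-intro} and convert the covariant term $-J\nabla_t$ into Lie-derivative language along $\gamma$. Since $\dot\gamma(t) = T R_\lambda(\gamma(t))$, the pulled-back covariant derivative $\nabla_t$ along $\gamma$ is $T\nabla_{R_\lambda}$ acting on sections of $\gamma^*TM$. The first step is therefore to recall from \cite{oh-wang:connection} the defining properties of the contact triad connection that let one rewrite $\nabla_{R_\lambda}$ as a Lie derivative. The key structural fact is that the triad connection is metric for $g_\lambda$, restricts $\xi$-compatibly, and has torsion whose $\xi$-part is controlled by $d\lambda$; in particular the relation $\nabla_X R_\lambda$ vanishes (or is purely in the Reeb direction) and $\nabla_{R_\lambda}$ differs from $\CL_{R_\lambda}$ by an explicit algebraic correction term. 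I would extract the identity of the schematic shape $\nabla_{R_\lambda}Y = \CL_{R_\lambda}Y + B(Y)$ on $\Gamma(\gamma^*\xi)$, where $B$ is a zeroth-order operator expressible through $\CL_{R_\lambda}J$ and $J$, using the torsion-freeness condition $T(Y, JY)=0$ together with metric compatibility.

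Next I would substitute this into $-J\nabla_t = -TJ\nabla_{R_\lambda} = -TJ\CL_{R_\lambda} - TJ B$. Combining with the existing $-\frac{T}{2}J\CL_{R_\lambda}J$ term and collecting, one should obtain exactly
\[
A^\pi_{(\lambda,J,\nabla)} = T\left(-\frac12 \CL_{R_\lambda}J - J\CL_{R_\lambda} - \frac12 J(\CL_{R_\lambda}J)\right),
\]
provided the correction $-JB$ reorganizes into the $-\frac12\CL_{R_\lambda}J$ piece (plus possibly contributing to the $-\frac12 J(\CL_{R_\lambda}J)$ piece). The essential algebraic input here is the standard identity relating the Nijenhuis-type tensor $\CL_{R_\lambda}J$ to the difference between $J\CL_{R_\lambda}$ and $\CL_{R_\lambda}J$ acting on $\xi$-sections, namely that $\CL_{R_\lambda}(JY) = (\CL_{R_\lambda}J)Y + J\CL_{R_\lambda}Y$, which one uses to trade the mixed term $J\CL_{R_\lambda}J$ for $\CL_{R_\lambda}\circ J$ modulo $\CL_{R_\lambda}J$. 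I would also verify that the operator still maps $\Gamma(\gamma^*\xi)$ to itself (so extending the action to $\Gamma(\gamma^*TM)$ via the projection $\pi$ is legitimate): $\CL_{R_\lambda}$ preserves $\xi$ since $\CL_{R_\lambda}\lambda = 0$, and $J$ has image in $\xi$, so each of the three terms on the right is well-defined on $\gamma^*TM$.

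The main obstacle I anticipate is bookkeeping the precise correction term $B$ in $\nabla_{R_\lambda} = \CL_{R_\lambda} + B$: one must be careful that the triad connection's torsion convention, the normalization of $d\lambda(\cdot,J\cdot)$ as the metric on $\xi$, and the factor conventions in $\CL_{R_\lambda}J$ all conspire to produce exactly the coefficients $-\frac12, -1, -\frac12$ rather than, say, a sign error or a factor of $2$. A clean way to organize this is to first establish the formula on the subbundle $\xi$ using a local $g_\lambda$-orthonormal frame adapted to $J$ (so that $\CL_{R_\lambda}J$ acts as an explicit symmetric-plus-skew endomorphism), check it against the adapted/compatible conditions of Definition \ref{defn:adapted-J}, and only then package it covariantly. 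An alternative, which I would cross-check against, is to differentiate the relation $J^2 = -\Pi$ along $R_\lambda$ to get $(\CL_{R_\lambda}J)J + J(\CL_{R_\lambda}J) = 0$ on $\xi$, i.e. $\CL_{R_\lambda}J$ anticommutes with $J$; this constraint pins down how $J(\CL_{R_\lambda}J)$ and $(\CL_{R_\lambda}J)J$ relate and should force the final coefficients, serving as a consistency check on the computation.
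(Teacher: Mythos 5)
Your outline follows the paper's own route: start from Proposition \ref{prop:A}, use $\nabla_t = T\nabla_{R_\lambda}$ along the isospeed trajectory, insert an identity of the form $\nabla_{R_\lambda}Y = \CL_{R_\lambda}Y + B(Y)$ with $B$ built from $\CL_{R_\lambda}J$ and $J$, and finish with the anticommutation $(\CL_{R_\lambda}J)J = -J(\CL_{R_\lambda}J)$; the correct correction is $B = \frac12(\CL_{R_\lambda}J)J$ (this is exactly Proposition \ref{prop:nablaRlambdaY}), and with it your bookkeeping gives precisely the coefficients $-\frac12,\,-1,\,-\frac12$ of \eqref{eq:A-in-CL2-intro}, with your consistency check (differentiating $J^2=-\Pi$ along $R_\lambda$) being the very identity used to simplify $J(\CL_{R_\lambda}J)J$ to $\CL_{R_\lambda}J$ on $\xi$.

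The one point to repair is your proposed source of $B$: it does not come from the $\xi$-torsion condition $T(Y,JY)=0$ together with metric compatibility, nor from any vanishing of $\nabla_Y R_\lambda$ (for $Y\in\xi$ this covariant derivative neither vanishes nor lies in the Reeb direction). The operative triad-connection inputs are the torsion axiom $T(R_\lambda,\cdot)=0$ and the formula $\nabla_Y R_\lambda = \frac12(\CL_{R_\lambda}J)JY$ from Corollary \ref{cor:connection}; the definition of torsion then gives $\nabla_{R_\lambda}Y = [R_\lambda,Y] + \nabla_Y R_\lambda = \CL_{R_\lambda}Y + \frac12(\CL_{R_\lambda}J)JY$, which is the identity you need to substitute. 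With that substitution your computation coincides with the paper's proof of Theorem \ref{thm:A=T}.
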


This study  enables us to make precise statement on the spectral behavior of asymptotic operators under the perturbation of $J$'s.
The ellipticity of $A^\pi_{(\lambda,J,\nabla)}$ implies that it
 has a discrete set of eigenvalues, which we enumerate into
$$
\cdots < \mu_{-k} < \cdots < \mu_{-1} < 0 < \mu_1 < \cdots < \mu_k <
\cdots
$$
repeated with finite multiplicities allowed,
where $\mu_k \to \infty$ (resp. $\mu_{-k} \to -\infty$) as $k \to \infty$.
It also has a uniform spectral gap (\cite[Theorem 6.29]{kato}) in that
there exists some $d> 0$ such that
$|\mu_{i+1} - \mu_i| \geq d > 0$ for all $i$.

Our explicit formula of the asymptotic operator given in Proposition
\ref{prop:A-intro}, which simultaneously applies to all closed Reeb orbits, 
enables us to provide a generic description of the spectral behavior of asymptotic 
operators under the change of $\lambda$-compatible CR almost complex structures $J$ when $\lambda$ is fixed.
For example, we prove
the following natural generic perturbation result of eigenvalues of the asymptotic operator.

\begin{thm}[Generic simpleness of eigenvalues,
Theorem \ref{thm:simple-eigenvalue}]
\label{thm:simple-eigenvalue-intro}  Let $(Q,\xi)$ be a contact manifold.
Assume that $\lambda$ is
nondegenerate. For a generic choice of $\lambda$-compatible CR almost complex
structures $J$, all eigenvalues $\mu_i$ of the asymptotic
operator are simple for all closed Reeb orbits of $\lambda$.
\end{thm}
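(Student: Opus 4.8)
\emph{Proof proposal.} The strategy is to realize the claimed set of $J$'s as a residual subset of the space $\CJ=\CJ(\lambda)$ of $\lambda$-compatible CR almost complex structures, through an eigenvalue-splitting argument. Equip $\CJ$ with the $C^\infty$ topology (a Baire space; alternatively one works with the $C^\ell$-completions and invokes Taubes' trick); recall that $\CJ$ is a non-empty contractible Fr\'echet manifold whose tangent space at $J_0$ consists of the sections $Y$ of $\End(\xi)$ anticommuting with $J_0$ and with $d\lambda(\cdot,Y\cdot)$ symmetric. Since $\lambda$ is nondegenerate, every closed Reeb orbit is nondegenerate, hence isolated, so the set $\CP(\lambda)$ of closed Reeb orbits is countable; moreover for $\gamma\in\CP(\lambda)$ the operator $A^\pi_{(\lambda,J,\nabla)}$ on $\Gamma(\gamma^*\xi)$ is invertible, so $0$ lies in a spectral gap. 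For $\gamma\in\CP(\lambda)$ and $N\in\N$ let $\CJ_{\gamma,N}\subset\CJ$ be the set of $J$ for which the eigenvalues $\mu_{-N}(J),\dots,\mu_{-1}(J),\mu_1(J),\dots,\mu_N(J)$ are pairwise distinct. The target set is $\bigcap_{\gamma\in\CP(\lambda)}\bigcap_{N\in\N}\CJ_{\gamma,N}$, so by Baire it suffices to show each $\CJ_{\gamma,N}$ is open and dense. Openness follows from the continuous dependence of $A^\pi_{(\lambda,J,\nabla)}$ on $J$ as an unbounded self-adjoint operator on $L^2(\gamma^*\xi)$, together with the uniform spectral gap recalled above.

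For density, fix $J_0$, $\gamma$ and $N$, write $A_0:=A^\pi_{(\lambda,J_0,\nabla)}$, and suppose $\mu_i(J_0)$ with $|i|\le N$ --- call it $\nu$, and note $\nu\ne0$ --- is an eigenvalue of multiplicity $m\ge2$, with $L^2$-orthonormal eigensections $e_1,\dots,e_m$ spanning the eigenspace $E\subset\Gamma(\gamma^*\xi)$. Applying $J_0$ to $A_0 e=\nu e$ and using $J_0^2=-\mathrm{id}_\xi$ together with Proposition~\ref{prop:A-intro} turns the eigenvalue equation into the linear first-order ODE $\nabla_t e=\nu J_0 e-\tfrac{T}{2}(\CL_{R_\lambda}J_0)e$ along $\gamma$. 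By uniqueness of solutions the evaluation $E\to\xi_{\gamma(t_0)}$, $e\mapsto e(t_0)$, is therefore injective for every $t_0\in S^1$; in particular $e_1(t_0),\dots,e_m(t_0)$ are linearly independent in $\xi_{\gamma(t_0)}$ for every $t_0$.

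Now, for $\dot J\in T_{J_0}\CJ$ choose a path $J_\varepsilon$ through $J_0$ with $\partial_\varepsilon J_\varepsilon|_0=\dot J$, and set $\dot A$ to be the $\varepsilon$-derivative at $0$ of $A^\pi_{(\lambda,J_\varepsilon,\nabla_\varepsilon)}$, where $\nabla_\varepsilon$ is the triad connection of $(Q,\lambda,J_\varepsilon)$. A priori $\dot A$ is first-order, but differentiating the formula of Proposition~\ref{prop:A-intro}, using that the variation of the triad connection is tensorial, and using the eigenfunction ODE above to replace $\nabla_t e$ by a zeroth-order expression when $e\in E$, one finds that $\dot A$ restricts on $E$ to multiplication by a bundle endomorphism $\Psi_{\dot J}(t)$ of $\xi_{\gamma(t)}$ depending linearly on the $1$-jet of $\dot J$ along $\gamma$. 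By first-order perturbation theory for self-adjoint operators (Kato), the splitting of $\nu$ along $J_\varepsilon$ is governed by the symmetric bilinear form $B_{\dot J}$ on $E$ with $B_{\dot J}(e_a,e_b)=\int_{S^1}\langle\Psi_{\dot J}(t)e_a(t),e_b(t)\rangle\,dt$; if $B_{\dot J}$ has $m$ distinct eigenvalues, then $\nu$ splits into $m$ simple eigenvalues of $A^\pi_{(\lambda,J_\varepsilon,\nabla)}$ for all small $\varepsilon\ne0$. Combined with the openness already proved and an induction on the total multiplicity of $\mu_{-N}(J),\dots,\mu_N(J)$, density of $\CJ_{\gamma,N}$ follows once we show that $\dot J\mapsto B_{\dot J}$ surjects onto the space of symmetric bilinear forms on $E$ (it suffices, in fact, that its image not be contained in the line $\R\,\langle\cdot,\cdot\rangle_{L^2}|_E$).

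Proving this last statement is the step I expect to be the main obstacle. Unwinding $\Psi_{\dot J}$ through the explicit formula --- using $\nu\ne0$ and the structure equations of the triad connection relating $\nabla_{R_\lambda}J_0$ and $\CL_{R_\lambda}J_0$ --- one reduces, exactly as in Uhlenbeck's treatment of generic simplicity of Laplace eigenvalues, to the claim that as $\dot J|_\gamma$ ranges over all $\lambda$-compatible variations along $\gamma$ the forms $\langle\Psi_{\dot J}(t)\cdot,\cdot\rangle$ realize --- up to the nonzero factor $\nu$ and lower-order corrections --- an arbitrary $J_0$-anti-invariant $g_\lambda$-symmetric form on $\xi_{\gamma(t)}$, independently at each $t$ (extension of a localized $\dot J|_\gamma$ to a compatible $\dot J$ on $Q$ being unobstructed, since the compatibility constraints are fiberwise). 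The obstruction to surjectivity of $\dot J\mapsto B_{\dot J}$ then becomes a nonzero symmetric matrix $(c_{ab})$ for which the symmetric $2$-tensor $\sum_{a,b}c_{ab}\,e_a(t)\odot e_b(t)$ is $J_0$-invariant for every $t$; by the pointwise linear independence of the $e_a(t)$ this forces a proper $J_0$-complex subbundle of $\gamma^*\xi$ to be a solution of the eigenfunction ODE, and ruling out such a rigid configuration of eigensections (or, if it cannot be excluded for $J_0$ itself, eliminating it by a preliminary small perturbation of $J_0$) is the crux; everything else is bookkeeping. This yields density of each $\CJ_{\gamma,N}$, and hence the theorem. (Equivalently, the density step may be phrased as a Sard--Smale argument for a universal eigenvalue section over $\CJ$, whose transversality requirement is precisely the same surjectivity.)
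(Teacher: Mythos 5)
Your overall scheme is essentially the paper's: countably many nondegenerate orbits plus a Baire argument, reduction to the finite-dimensional eigenspace $E$ using that eigensections solve a first-order ODE and are therefore pointwise linearly independent, Kato's first-order perturbation theory for the splitting of a multiple eigenvalue, and finally the reduction of everything to the surjectivity of $\dot J\mapsto B_{\dot J}$ onto symmetric forms on $E$ (the paper packages this as stratawise transversality of $A_{k;\gamma}$ to the discriminantal variety, Propositions \ref{prop:transversality} and \ref{prop:epimorphism}, but that difference is cosmetic). The problem is that this surjectivity is exactly where the content of the theorem lies, and you leave it unproved: you yourself flag the ``rigid configuration'' obstruction --- a nonzero symmetric $(c_{ab})$ with $\sum_{a,b}c_{ab}\,e_a(t)\odot e_b(t)$ being $J_0$-invariant for all $t$ --- as the crux, and offer only the hope of excluding it or removing it by a preliminary perturbation. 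That is a genuine gap, not bookkeeping. Moreover the way you reach it discards the structural feature that the paper uses to close it: by trying to realize arbitrary forms pointwise through the zeroth-order dependence of $\dot A$ on $\dot J$, you are genuinely confined by the constraint $\dot J J_0+J_0\dot J=0$ to $J_0$-anti-invariant symmetric forms, so the obstruction you meet is real for that restricted class of variations and does not obviously disappear.

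The paper avoids the obstruction by using the full first-order dependence of the variation on $B=\dot J$ along the orbit. Starting from the connection-free formula \eqref{eq:A-in-CL2}, the variation is \eqref{eq:dJFB}, and after substituting the eigenfunction identity (Lemma \ref{lem:eigenfunction}) its action on an eigensection becomes $\bigl(M\,\CL_{R_\lambda}B+BN\bigr)e_j$ with $M=-\frac{T}{2}(\mathrm{Id}+J)$ pointwise invertible. Consequently, given any symmetric $L$ on $E$, one extends it to $\widetilde L\in\End(\gamma^*\xi)$ using the pointwise linear independence of the $e_a$ and then \emph{solves} the linear first-order ODE $M\nabla_t B+MC_\phi B+BN=\widetilde L$ along $\gamma$ for $B$ (Lemmas \ref{lem:wendl} and \ref{lem:solving-for-B}); surjectivity is then immediate and no invariance obstruction ever arises, precisely because the $\CL_{R_\lambda}B$-term lets one prescribe the right-hand side freely rather than only the pointwise multiplication part of the variation. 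To complete your argument you would need either to import this ODE-solving step or to find an independent argument excluding the invariant configuration of eigensections; as written, the proposal stops short of the theorem.
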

See Section \ref{sec:spectral-perturbation} for our derivation of the formula of
the asymptotic operator. 

\subsection{Relationship with other literature and discussion}

We have no doubt that a parameterized version of the proof of Theorem \ref{thm:simple-eigenvalue}
will lead to the following spectral flow description of the index. 

Consider the nonlinear Fredholm map 
$$
\Upsilon_{(\lambda,J)}^{\text{\rm cont}}(w): = (\delbar^\pi w, d(w^*\lambda \circ j))
$$
whose domain and codomain are specified as in \cite{oh:contacton} or in \cite{oh-yso:index}
where this map was just written as $\Upsilon$ instead. Since we use the latter for a 
different purpose later in the present article, we use different notation 
$\Upsilon_{(\lambda,J)}^{\text{\rm cont}}$ here for the same
operator appearing in \cite{oh:contacton} or in \cite{oh-yso:index}.

\begin{thm}\label{thm:spectral-flow} Let $(Q,\lambda,J)$ be a contact triad.
Let  $w: \R \times S^1 \to Q$ be a contact instanton with its asymptotic 
limits $(\gamma_+,T_+)$ and $(\gamma_-,T_-)$ and write the associated 
asymptotic operators as 
$$
A^{\pm \infty} = A^{\pi, \pm}_{(\lambda,J,\nabla)}: \Gamma(\gamma_{\pm \infty} ^*\xi) 
\to \Gamma(\gamma_{\pm \infty}^*\xi).
$$
Then for $J$ generic in the sense of Theorem \ref{thm:simple-eigenvalue}, we have
$$
\Index D\Upsilon_{(\lambda,J)}^{\text{\rm cont}}(w) = \mu^{\text{\rm spec}}(A^{- \infty}, A^{+\infty})
$$
where $\mu^{\text{\rm spec}}(A^{- \infty}, A^{+ \infty})$ is the spectral flow from 
$A^{-\infty}$ to $A^{+ \infty}$.
\end{thm}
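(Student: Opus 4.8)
\medskip
\noindent\textbf{Proof plan.} The plan is to reduce $D\Upsilon_{(\lambda,J)}^{\text{\rm cont}}(w)$ to the normal form to which the classical identity ``Fredholm index $=$ spectral flow'' --- Robbin--Salamon \cite{robbin-salamon:asymptotic}, and in the symplectization setting Hofer--Wysocki--Zehnder \cite{HWZ:asymptotics} --- applies, the essential new ingredient being the explicit covariant tensorial formulas of Proposition \ref{prop:A} and Theorem \ref{thm:A=T}, which pin down both the operator-valued path $\tau \mapsto A^\pi(\tau)$ and its endpoints $A^{\pm\infty}$ along a contact instanton. All bundles along $w$ are trivialized by the pull-back $w^*\nabla$ of the contact triad connection \cite{oh-wang:connection}.

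First I would record the \emph{asymptotic normal form}. By the $C^\infty$-exponential convergence of a finite energy contact instanton $w : \R \times S^1 \to Q$ to its asymptotic isospeed Reeb orbits $(\gamma_\pm, T_\pm)$ (the decay estimates of \cite{oh:contacton}, in the spirit of \cite{HWZ:asymptotics}), over $[\tau_0, \infty) \times S^1$ the bundle $w^*TQ = w^*\xi \oplus \R\{R_\lambda\}$ admits a $\tau$-covariantly constant trivialization in which
$$
D\Upsilon_{(\lambda,J)}^{\text{\rm cont}}(w) = \del_\tau + A(\tau) + R(\tau),
$$
with $\|R(\tau)\|_{\text{\rm op}} \to 0$ exponentially as $|\tau| \to \infty$, $A(\tau) \to A^{\pm\infty}$, and $A(\tau)$ the self-adjoint operator along $\gamma_\tau := w(\tau, \cdot)$ prescribed by Proposition \ref{prop:A} (restricted to $\xi$, together with the companion block in the $\R\{R_\lambda\}$-direction produced by the $d(w^*\lambda \circ j)$-equation, cf. \cite[\S 11.2, \S 11.5]{oh-wang:CR-map2}). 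The nondegeneracy of $\lambda$ (carried over from Theorem \ref{thm:simple-eigenvalue}) makes $A^{\pm\infty}$ invertible, so $D\Upsilon_{(\lambda,J)}^{\text{\rm cont}}(w)$ is Fredholm between the weighted Sobolev completions fixed in \cite{oh:contacton,oh-yso:index}.

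The second step is the \emph{reduction to the $\xi$-block}: using the structural identities for the triad connection that underlie Proposition \ref{prop:A} and Theorem \ref{thm:A=T}, the $\R\{R_\lambda\}$/$f$-block of $A(\tau)$ is a lower-order perturbation of a fixed $\del_\tau$-translation-invariant $\delbar$-type operator on the trivial complex line bundle, so a homotopy through Fredholm operators shows this block contributes $0$ both to the index and to the spectral flow; discarding it leaves $\del_\tau + A^\xi(\tau)$ with $A^\xi(\tau)$ a path of elliptic self-adjoint operators on $L^2(S^1, \gamma_\tau^*\xi)$ whose invertible limits $A^{\pm\infty}$ are attained exponentially. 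For such operators \cite{robbin-salamon:asymptotic} (equivalently the spectral computation behind \cite{HWZ:asymptotics}) yields $\Index(\del_\tau + A^\xi(\tau)) = \mu^{\text{\rm spec}}(\{A^\xi(\tau)\}_{\tau \in \R})$ modulo the standard sign convention, and homotopy invariance of the spectral flow reduces the right-hand side to $\mu^{\text{\rm spec}}(A^{-\infty}, A^{+\infty})$. To turn this last quantity into an honest signed count and match it with the Conley--Zehnder-type index already computed in \cite{oh-yso:index}, I would run the \emph{parametrized version} of the Sard--Smale argument proving Theorem \ref{thm:simple-eigenvalue}: over the parameter $\tau$ and the Banach manifold of $\lambda$-compatible CR almost complex structures, the universal space of eigendata of $\{A^\xi_{(\lambda,J,\nabla)}(\tau)\}$ is shown to be a Banach manifold, so that for generic $J$ --- the two fixed endpoint operators already having simple spectrum by Theorem \ref{thm:simple-eigenvalue} --- the path $\tau \mapsto A^\xi(\tau)$ meets the wall of non-invertible operators transversally, at simple eigenvalues, finitely often; each crossing form is then $\pm 1$, their sum is $\mu^{\text{\rm spec}}(A^{-\infty}, A^{+\infty})$, and the standard linear gluing identity relating the crossing form to the index closes the argument.

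The hard part will be the reduction step just sketched: the operator $\Upsilon_{(\lambda,J)}^{\text{\rm cont}}$ is not a priori of the square shape ``$\del_\tau + \text{self-adjoint}$'' --- its second component is the \emph{exterior derivative} of a one-form rather than a genuine section --- so it must first be massaged, via the triad-connection identities, into that shape, and one must verify that the supplementary $\R\{R_\lambda\}$/$f$-block genuinely contributes nothing to either the index or the spectral flow. This cancellation, together with a careful bookkeeping of the exponential weights, of the $T$-dependence in the formula of Theorem \ref{thm:A=T}, and of sign conventions, is precisely where the covariant tensorial formalism developed in this paper is indispensable. A secondary difficulty is to make the ``parametrized version'' of Theorem \ref{thm:simple-eigenvalue} rigorous in the presence of the non-compact parameter $\tau$ and of the constraint that the endpoint operators at $\tau = \pm\infty$ may not be perturbed, which calls for a relative transversality argument localized near the punctures.
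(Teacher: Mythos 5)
Your plan is essentially the same as the paper's: the paper itself only sketches this proof, reducing it to the contact-instanton analogue of Wendl's Lemma 3.17/Appendix C (the Robbin--Salamon ``index $=$ spectral flow'' scheme applied to the natural family $\tau \mapsto A^\tau$ along $w$), with the needed transversality of eigenvalue crossings obtained by the parameterized version of the proof of Theorem \ref{thm:simple-eigenvalue} within $J$-perturbations, exactly as you propose. Your additional remarks on the asymptotic normal form and the decoupling of the Reeb/radial block correspond to what the paper records in Proposition \ref{prop:off-diagonal} and the homotopy \eqref{eq:continuous-path}, so there is no substantive divergence.
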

The main step of the proof is to establish 
the contact instanton version of Lemma 3.17 \cite{wendl:lecture} which reads that there is a
perturbation (inside the space of abstract Fredholm operators) of the given one-parameter family of asymptotic operators 
such that all eigenvalue families of the perturbed operator are transversal in an explicit sense. 
In our case, such a transversality result can be obtained inside the smaller natural family of $J$ perturbations
by the parameterized version of the proof of Theorem \ref{thm:simple-eigenvalue}.
We omit the details of its proof
leaving to interested readers or them to \cite[Appendix C]{wendl:lecture} which handles the
case of pseudoholomorphic curves in symplectization the scheme of which can be now easily 
modified via the perturbations of $J$ utilizing the scheme of
 our proof of Theorem \ref{thm:simple-eigenvalue}.

Because the existing literature on the pseudoholomorphic curves on
symplectization lack an explicit formula of the asymptotic operator as given in
Theorem \ref{thm:A=T-intro}, it has been the case that the general abstract
perturbation theory of linear operators Kato \cite{kato} is just
quoted  in their study of asymptotic operators which prevents one
from making any statement on specific dependence on
the compatible almost complex structures.
(See \cite[Section 3.2]{wendl:lecture}
especially see Lemma 3.17, Theorem 3.35 and Appendix C of
\cite{wendl:lecture} for example which  in turn extends the exposition
given in \cite{HWZ:embedding-control} to higher dimensions.)

As far as we can see, the aforementioned status of matter makes the existing description 
in the literature, at least, of the spectral flow related to the index 
formula of the linearized operator for the $\widetilde J$-pseudoholomorphic curves on 
the fundamental case of 
$\R \times S^1$ for the closed string case (or on $\R \times [0,1]$ for the open string case)
in symplectization or in SFT rather unsatisfying and incomplete.  This is because the spectral flow
definition \emph{over the real line $\R \cong (0,1)$} starts from the requirement that the asymptotic operators 
$$
A^{\pm \infty} = A^{\pi, \pm}_{(\lambda,J,\nabla)}: \Gamma(\gamma_{\pm \infty} ^*\xi) \to \Gamma(\gamma_{\pm \infty}^*\xi)
$$
associated to the given $\widetilde J$ have simple eigenvalues at the end points: 
Note, however, that \emph{these operators may have eigenvalues with multiplicity} before making some
perturbation. (Such a requirement is vacuous over the closed circle $S^1$
considered as in the original article \cite{atiyah-patodi-singer:spectral}.)
Furthermore one needs this description
 \emph{simultaneously over all} asymptotic Reeb orbits for the given almost complex structure
 $\widetilde J$.
To achieve this requirement
the existing studies of such a spectral flow representation in the literature had to use the much bigger 
perturbations of abstract Fredholm operators \emph{in the linearized level}
but \emph{not in the original off-shell nonlinear level}, as well as the perturbations are made
depending on each asymptotic orbit which is not a priori given, especially 
when even its existence is now known. 
(See \cite[Lemma 3.17 \& Appendix C]{wendl:lecture}, for example.) 
 Under such current circumstance,  it is very cumbersome to
develop, for example, a  gluing theory leading to the Kuranishi structures compatible over
different stata of the compactified moduli space of pseudoholomorphic curves entering in the SFT
compactification.

Our Theorem \ref{thm:simple-eigenvalue-intro} cures at least this unsatisfying point on
the spectral flow description of the index in the literature by considering the natural family
$$
\tau \mapsto A^\tau: \Gamma(\gamma_\tau^*\xi) \to \Gamma(\gamma_\tau^*\xi); \quad \gamma_\tau = w(\tau,\cdot)
$$
which connects the asymptotic operator
at $\tau = \pm \infty$ for the given $J$, \emph{after one single perturbation thereof
in the off-shell level} (See \eqref{eq:Atau} for the precise expression.) 
We  believe that the kind of perturbation result stated in Theorem \ref{thm:simple-eigenvalue} and
Theorem \ref{thm:spectral-flow}
will play some role in the construction of Kuranishi structures
on the moduli space of finite energy contact instantons so that certain
natural functor can be defined in our Fukaya-type category of contact manifolds
\cite{kim-oh:contacton-kuranishi}, \cite{kim-oh:rational}. 
(See \cite{bao-honda:kuranishi} for a construction of semi-global Kuranishi structure 
in their definition of contact homology, where the simpleness properties of 
the eigenvalues of the asymptotic operators is utilized in an essential way.)

We also refer readers
to the arXiv version \cite{kim-oh:asymp-analysis} of the present paper for some application 
to the finer study of asymptotic convergence of contact instantons which simplifies the exposition
given in the literature on the pseudoholomorphic curves on symplectization via the systematic
tensorial calculus.

Finally we would like to just mention that the same asymptotic study
can be made by now in a straightforward way by incorporating the boundary condition
as done in \cite{oh-yso:index}, \cite{oh:contacton-transversality},
\cite{oh:contacton-gluing}.

\medskip

The present work has been first announced in the survey paper \cite{oh-kim:survey}
submitted for MATRIX Annal for the IBS-CGP and MATRIX workshop on Symplectic Topology
held for December 5 - 16, 2022.

\medskip
\noindent{\bf Acknowledgement:} We thank MATRIX for providing
an excellent research environment where the present research was initiated.
We also thank Hutchings for attracting our attention to Siefring's
paper \cite{siefring:relative} after we posted
our survey paper \cite{oh-kim:survey} in the arXiv.

\section{Recollection of the analysis of contact instantons}

In this section, we recall the basic Weitzenb\"ock-type identities and the
various analytical results on contact instantons established via covariant
tensorial approach in
\cite{oh-wang:connection}--\cite{oh-wang:CR-map2}, \cite{oh:contacton}--\cite{oh:contacton-Legendrian-bdy},
\cite{oh-savelyev} and  \cite{oh-yso:index}. Our study of asymptotic
operators of contact instantons and of pseudoholomorphic curves
on symplectization also follows this tensorial approach and is based on this
analytical foundation on contact instantons.

Denote by $(\dot\Sigma, j)$ a punctured Riemann surface (including the case of closed Riemann surfaces without punctures).

The following definition is introduced in \cite{oh-wang:CR-map1}.

\begin{defn}[Contact Cauchy-Riemann map]
A smooth map $w:\dot\Sigma\to M$ is called a
\emph{contact Cauchy-Riemann map}
(with respect to the contact triad $(M, \lambda, J)$), if $w$ satisfies the following Cauchy--Riemann equation
$$
\delbar^\pi w:=\delbar^{\pi}_{j,J}w:=\frac{1}{2}(\pi dw+J\pi dw\circ j)=0.
$$
\end{defn}

Recall that for a fixed smooth map $w:\dot\Sigma\to M$,
the triple
$$
(w^*\xi, w^*J, w^*g_\xi)
$$
becomes  a Hermitian vector bundle
over the punctured Riemann surface $(\dot\Sigma,j)$.  This  introduces a Hermitian bundle structure on
$Hom(T\dot\Sigma, w^*\xi)\cong T^*\dot\Sigma\otimes w^*\xi$ over $\dot\Sigma$,
with inner product given by
$$
\langle \alpha\otimes \zeta, \beta\otimes\eta \rangle =h(\alpha,\beta)g_\xi(\zeta, \eta),
$$
where $\alpha, \beta\in\Omega^1(\dot\Sigma)$, $\zeta, \eta\in \Gamma(w^*\xi)$,
and $h$ is the K\"ahler metric on the punctured Riemann surface $(\dot\Sigma, j)$.

Let $\nabla^\pi$ be the contact Hermitian connection.
Combining the pulling-back of this connection and the Levi--Civita connection of the Riemann surface,
we get a Hermitian connection for the bundle $T^*\dot\Sigma\otimes w^*\xi \to \dot\Sigma$, which we will still denote by $\nabla^\pi$ by a slight abuse
of notation. \emph{This is the setting where we apply the harmonic theory and
Weitzenb\"ock formulae to study the global a priori $W^{1,2}$-estimate of
$d^\pi w$:} The smooth map $w$ has an associated $\pi$-harmonic energy density,  the function $e^\pi(w):\dot\Sigma\to \R$, defined by
$$
e^\pi(w)(z):=|d^\pi w|^2(z).
$$
(Here we use $|\cdot|$ to denote the norm from $\langle\cdot, \cdot \rangle$ which should be clear from the context.)

However the contact Cauchy-Riemann equation itself
$\delbar^\pi w = 0$ does not form an elliptic system.
By augmenting the closedness condition $d(w^*\lambda\circ j)=0$ to
contact Cauchy-Riemann map equation $\delbar^\pi w = 0$, we arrive at
an elliptic system \eqref{eq:contact-instanton}
\be\label{eq:contact-instanton}
\delbar^\pi w = 0, \quad d(w^*\lambda \circ j) = 0.
\ee

\subsection{Fundamental equation of contact Cauchy-Riemann maps}
\label{subsec:CRmap}

The following fundamental identity
is derived in \cite{oh-wang:CR-map1}
for whose derivation we refer readers to.

\begin{thm}[Fundamental Equation; Theorem 4.2 \cite{oh-wang:CR-map1}]\label{thm:Laplacian-w}
Let $w$ be a contact Cauchy--Riemann map, i.e., a solution of $\delbar^\pi w=0$.
Then
\be\label{eq:Laplacian-w}
d^{\nabla^\pi} (d^\pi w) = -w^*\lambda\circ j \wedge\left(\frac{1}{2} (\CL_{R_\lambda}J)\,
d^\pi w\right).
\ee
\end{thm}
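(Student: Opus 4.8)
The plan is to deduce the identity from the elementary torsion identity $d^{w^*\nabla}(dw)(X,Y) = T(dw(X),dw(Y))$ for the pulled-back contact triad connection, combined with the explicit torsion of that connection and the equation $\delbar^\pi w = 0$. Let $\nabla$ be the contact triad connection on $TM$, with torsion $T$, and let $w^*\nabla$ be its pullback to $w^*TM$. Viewing $dw \in \Omega^1(\dot\Sigma, w^*TM)$, the general "symmetry of the Hessian up to torsion" identity gives, for all vector fields $X, Y$ on $\dot\Sigma$,
\[
d^{w^*\nabla}(dw)(X,Y) = (w^*\nabla)_X(dw(Y)) - (w^*\nabla)_Y(dw(X)) - dw([X,Y]) = T(dw(X), dw(Y)),
\]
valid for any smooth $w$ with no equation imposed. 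Now project onto $\xi$: since the triad connection satisfies $\nabla\lambda = 0$ and $\nabla R_\lambda = 0$, it preserves $TM = \xi \oplus \R R_\lambda$, so $w^*\nabla$ preserves $w^*\xi \oplus w^*(\R R_\lambda)$ and restricts on the first summand to the pulled-back contact Hermitian connection, i.e. to the operator $\nabla^\pi$ above; moreover, writing $dw = d^\pi w + (w^*\lambda)\otimes R_\lambda$, the term $d^{w^*\nabla}\bigl((w^*\lambda)\otimes R_\lambda\bigr) = d(w^*\lambda)\otimes R_\lambda$ lies entirely in the $\R R_\lambda$-factor. Hence the $\xi$-component of the identity reads $d^{\nabla^\pi}(d^\pi w) = \pi\bigl(T(dw(\cdot), dw(\cdot))\bigr)$.

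It then remains to evaluate this. By \cite{oh-wang:connection} the torsion of the triad connection satisfies $T(Y,Z) = d\lambda(Y,Z)R_\lambda$ for $Y, Z \in \xi$, which has no $\xi$-component, while the Reeb-direction torsion $T(R_\lambda, Y)$ is the $\xi$-valued tensor $\tfrac12(\CL_{R_\lambda}J)JY$ (up to the normalization conventions of the triad metric and connection). Substituting $dw(X) = d^\pi w(X) + (w^*\lambda)(X)R_\lambda$ into $T(dw(X), dw(Y))$ and projecting to $\xi$, only the two mixed terms survive, which gives $\pi\bigl(T(dw(\cdot),dw(\cdot))\bigr) = \tfrac12\,(w^*\lambda) \wedge \bigl((\CL_{R_\lambda}J)\,J\,d^\pi w\bigr)$. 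Finally I would invoke $\delbar^\pi w = 0$ in the form $J\,d^\pi w = d^\pi w \circ j$; substituting this (together with $(w^*\lambda \circ j)\circ j = -\,w^*\lambda$) rewrites the previous expression as $-\,(w^*\lambda\circ j) \wedge \bigl(\tfrac12(\CL_{R_\lambda}J)\,d^\pi w\bigr)$, which is the asserted formula.

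The main obstacle is to pin down the Reeb-direction torsion $T(R_\lambda,\cdot)$ of the triad connection in the right normalization, and to carry out the last step carefully enough that the endomorphism $(\CL_{R_\lambda}J)J$ produced by the torsion — after using $J\,d^\pi w = d^\pi w \circ j$ — matches exactly the factor $\tfrac12 \CL_{R_\lambda}J$ and the composition with $j$ in the statement, with the correct overall sign. The two softer inputs — that $w^*\nabla$ restricts on $w^*\xi$ to the pulled-back $\nabla^\pi$, and that $d^{w^*\nabla}\bigl((w^*\lambda)\otimes R_\lambda\bigr)$ contributes nothing to the $\xi$-part — are immediate from $\nabla\lambda = 0 = \nabla R_\lambda$ and the characterization of $\nabla^\pi$. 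An equivalent but more hands-on route is to fix an isothermal coordinate $z = x + iy$, set $\zeta := \pi\,\partial_x w$ (so $\pi\,\partial_y w = J\zeta$ by $\delbar^\pi w = 0$) and expand $\bigl(d^{\nabla^\pi}(d^\pi w)\bigr)(\partial_x,\partial_y) = (w^*\nabla^\pi)_{\partial_x}(J\zeta) - (w^*\nabla^\pi)_{\partial_y}\zeta$ directly, using $\nabla^\pi_Y J = 0$ for $Y\in\xi$ to isolate the Reeb component of $\partial_x w$ and then the same torsion data; the delicate coefficient-and-sign matching is identical.
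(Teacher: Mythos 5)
Your overall skeleton --- the pullback-torsion identity $d^{w^*\nabla}(dw)(X,Y)=T(dw(X),dw(Y))$, the splitting $dw = d^\pi w + w^*\lambda\otimes R_\lambda$, the defining properties of the triad connection, and the final conversion via $J\,d^\pi w = d^\pi w\circ j$ --- is the right route (the paper itself gives no proof, deferring to Theorem 4.2 of \cite{oh-wang:CR-map1}, and that derivation is of exactly this tensorial type). However, the two inputs you call ``immediate'' are false for the contact triad connection, and they are load-bearing. First, $\nabla R_\lambda \neq 0$: by Corollary \ref{cor:connection}, $\nabla_Y R_\lambda = \frac12(\CL_{R_\lambda}J)JY$ for \emph{all} $Y$, so $\nabla$ preserves neither $\R\langle R_\lambda\rangle$ nor $\xi$ (one must project, which is why $\nabla^\pi=\pi\nabla|_\xi$ is defined by projection), and
\be
d^{w^*\nabla}\bigl(w^*\lambda\otimes R_\lambda\bigr) \;=\; d(w^*\lambda)\otimes R_\lambda \;-\; w^*\lambda\wedge\Bigl(\tfrac12(\CL_{R_\lambda}J)J\,d^\pi w\Bigr),
\ee
whose second term is $\xi$-valued and is precisely the source of the right-hand side of \eqref{eq:Laplacian-w}; your claim that this term ``lies entirely in the $\R R_\lambda$-factor'' discards it. Second, your torsion data is wrong: $T(R_\lambda,\cdot)=0$ is a defining axiom of the triad connection (Theorem \ref{thm:connection}(2)), so the mixed torsion terms from which you produce the right-hand side vanish identically --- the tensor $\frac12(\CL_{R_\lambda}J)J$ is $\nabla_{(\cdot)}R_\lambda$, not $T(R_\lambda,\cdot)$. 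Moreover, on $\xi\times\xi$ the torsion is \emph{not} purely Reeb-directed in general; only $\lambda(T)=d\lambda$ and $T^\pi(JY,Y)=0$ hold, so the $\xi$-part $T^\pi(d^\pi w(\del_x),d^\pi w(\del_y))$ dies only after you use $\delbar^\pi w=0$ (which makes the two arguments $\zeta$ and $J\zeta$) together with axiom (5). Thus the CR equation is needed already at this stage, not only in your last cosmetic step.

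Your final formula comes out correct only because the tensor you misattribute to the mixed torsion happens to be $\nabla_{(\cdot)}R_\lambda$ and enters with the same sign, so the two misstatements compensate; as written, though, the argument is not a proof. The repair is short: take the $\xi$-component of $d^{w^*\nabla}(dw)=T(dw(\cdot),dw(\cdot))$ using the displayed identity above and $\pi\,d^{w^*\nabla}(d^\pi w)=d^{\nabla^\pi}(d^\pi w)$, obtaining $d^{\nabla^\pi}(d^\pi w)-w^*\lambda\wedge\bigl(\tfrac12(\CL_{R_\lambda}J)J\,d^\pi w\bigr)=T^\pi(d^\pi w,d^\pi w)$; kill the right-hand side on-shell by $\delbar^\pi w=0$ and $T^\pi(JY,Y)=0$; then your last step, using $J\,d^\pi w=d^\pi w\circ j$ and the surface identity $\alpha\wedge(\beta\circ j)=-(\alpha\circ j)\wedge\beta$, gives \eqref{eq:Laplacian-w} with the stated sign.
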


The following elegant expression of Fundamental Equation in any
\emph{isothermal coordinates} $(x,y)$, i.e., one such that
$z = x+ iy$ provides a complex coordinate of $(\dot \Sigma,j)$
such that $h = dx^2 + dy^2$, will be extremely useful for the study of
higher a priori $C^{k,\alpha}$ H\"older estimates.

\begin{cor}[Fundamental Equation in Isothermal Coordinates]
Let $(x,y)$ be an isothermal coordinates.
Write $\zeta := \pi \frac{\del w}{\del \tau}$
as a section of $w^*\xi \to M$. Then
\be\label{eq:fundamental-isothermal}
\nabla_x^\pi \zeta + J \nabla_y^\pi \zeta
 - \frac{1}{2} \lambda\left(\frac{\del w}{\del x}\right)(\CL_{R_\lambda}J)\zeta
 + \frac{1}{2} \lambda\left(\frac{\del w}{\del y}\right)(\CL_{R_\lambda}J)J\zeta =0.
\ee
\end{cor}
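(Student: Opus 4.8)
The plan is to obtain \eqref{eq:fundamental-isothermal} by transcribing the coordinate-free Fundamental Equation \eqref{eq:Laplacian-w} of Theorem \ref{thm:Laplacian-w} into the given isothermal coordinates $(x,y)$ and then applying the fiberwise operator $J$ to put it in the stated $\delbar$-type form. First I would evaluate both sides of \eqref{eq:Laplacian-w}, which are $w^*\xi$-valued $2$-forms on $\dot\Sigma$, on the commuting coordinate frame; it suffices to do this on $(\del_x,\del_y)$ since that frame spans $T\dot\Sigma$. For the left-hand side I use the formula for the exterior covariant derivative of a bundle-valued $1$-form, which, because $[\del_x,\del_y]=0$ and the connection on $\dot\Sigma$ is torsion-free (so that the surface Christoffel terms antisymmetrize away), collapses to
\[
d^{\nabla^\pi}(d^\pi w)(\del_x,\del_y) = \nabla_x^\pi\big(\pi\tfrac{\del w}{\del y}\big) - \nabla_y^\pi\big(\pi\tfrac{\del w}{\del x}\big),
\]
where $\nabla^\pi$ on the right now means the pulled-back contact Hermitian connection on $w^*\xi$. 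For the right-hand side I pair the scalar $1$-form $w^*\lambda\circ j$ against the $w^*\xi$-valued $1$-form $\tfrac12(\CL_{R_\lambda}J)d^\pi w$, using $(w^*\lambda\circ j)(\del_x) = \lambda(\tfrac{\del w}{\del y})$ and $(w^*\lambda\circ j)(\del_y) = -\lambda(\tfrac{\del w}{\del x})$, up to the orientation sign that fixes the action of $j$ on $\del_x,\del_y$.

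Next I would use the equation $\delbar^\pi w = 0$ itself, which in the isothermal coordinates reads $\pi\tfrac{\del w}{\del x} + J\,\pi\tfrac{\del w}{\del y} = 0$, i.e. $\pi\tfrac{\del w}{\del y} = J\zeta$ with $\zeta := \pi\tfrac{\del w}{\del x}$, to eliminate $\pi\tfrac{\del w}{\del y}$ throughout; and since the contact Hermitian connection is complex linear, $\nabla^\pi J = 0$, I may commute $J$ past $\nabla^\pi$. After substitution the transcribed identity has the schematic $(0,1)$-type form $J\nabla_x^\pi\zeta - \nabla_y^\pi\zeta = (\text{zeroth order in }\zeta)$, whereas \eqref{eq:fundamental-isothermal} is the conjugate combination $\nabla_x^\pi\zeta + J\nabla_y^\pi\zeta$; so I apply $-J$ to the whole identity and use $J^2 = -\mathrm{id}$ on $\xi$. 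This turns the left-hand side into $\nabla_x^\pi\zeta + J\nabla_y^\pi\zeta$ and the right-hand side into terms of the form $J(\CL_{R_\lambda}J)\zeta$ and $J(\CL_{R_\lambda}J)J\zeta$. To finish I invoke the algebraic identity $J(\CL_{R_\lambda}J) = -(\CL_{R_\lambda}J)J$, which holds because $\CL_{R_\lambda}(J^2) = -\CL_{R_\lambda}\Pi = 0$ — the Reeb flow preserves $\lambda$, hence the splitting $TQ = \xi\oplus\R R_\lambda$ and the idempotent $\Pi$ — while $\CL_{R_\lambda}J$ maps $\xi$ to $\xi$ and annihilates $R_\lambda$; consequently $J(\CL_{R_\lambda}J)\zeta = -(\CL_{R_\lambda}J)J\zeta$ and $J(\CL_{R_\lambda}J)J\zeta = (\CL_{R_\lambda}J)\zeta$ on $w^*\xi$. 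Feeding these back produces exactly the two zeroth-order terms $-\tfrac12\lambda(\tfrac{\del w}{\del x})(\CL_{R_\lambda}J)\zeta + \tfrac12\lambda(\tfrac{\del w}{\del y})(\CL_{R_\lambda}J)J\zeta$ of \eqref{eq:fundamental-isothermal}.

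There is no real conceptual obstacle: all the content sits in Theorem \ref{thm:Laplacian-w}, and the corollary is just its faithful coordinate form. The only point needing genuine care is the sign bookkeeping — pinning down the orientation convention that determines $j\del_x$ versus $j\del_y$, the sign conventions for $d^{\nabla^\pi}$ and for the wedge of a scalar with a vector-valued form, and the precise meaning of $w^*\lambda\circ j$ — and then tracking these consistently through the multiplication by $J$ and the use of $J(\CL_{R_\lambda}J) = -(\CL_{R_\lambda}J)J$, since the two scalar coefficients $\lambda(\tfrac{\del w}{\del x}),\lambda(\tfrac{\del w}{\del y})$ trade places against $(\CL_{R_\lambda}J)\zeta$ and $(\CL_{R_\lambda}J)J\zeta$ under that multiplication. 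A good consistency check on the final formula is the degenerate case $\CL_{R_\lambda}J = 0$: there \eqref{eq:fundamental-isothermal} must collapse to $\nabla_x^\pi\zeta + J\nabla_y^\pi\zeta = 0$, the holomorphicity of $\zeta$ for the pulled-back complex structure on $w^*\xi$, which is precisely what \eqref{eq:Laplacian-w} gives when its right-hand side vanishes.
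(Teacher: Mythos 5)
Your overall strategy is the intended one (the paper states this corollary without proof, as a direct transcription of Theorem \ref{thm:Laplacian-w} from \cite{oh-wang:CR-map1}), and the individual ingredients you list are all correct: the evaluation of $d^{\nabla^\pi}(d^\pi w)$ on the commuting frame, the readings $(w^*\lambda\circ j)(\del_x)=\lambda(\del w/\del y)$, $(w^*\lambda\circ j)(\del_y)=-\lambda(\del w/\del x)$ for $j\del_x=\del_y$, the on-shell substitution $\pi\tfrac{\del w}{\del y}=J\zeta$ with $\zeta=\pi\tfrac{\del w}{\del x}$ (the ``$\pi\,\del w/\del\tau$'' in the statement must indeed be read this way), the use of $\nabla^\pi J=0$, and the anticommutation $J(\CL_{R_\lambda}J)=-(\CL_{R_\lambda}J)J$ justified via $\CL_{R_\lambda}\Pi=0$.

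The gap is exactly at the step you yourself single out as the only delicate one and then do not perform: the final sign and index bookkeeping is asserted (``produces exactly the two zeroth-order terms'') rather than carried out, and with the conventions you fixed it does not produce the printed formula. Evaluating \eqref{eq:Laplacian-w} on $(\del_x,\del_y)$ gives
\[
J\nabla_x^\pi\zeta-\nabla_y^\pi\zeta
=-\tfrac12\,\lambda\Big(\tfrac{\del w}{\del x}\Big)(\CL_{R_\lambda}J)\zeta
-\tfrac12\,\lambda\Big(\tfrac{\del w}{\del y}\Big)(\CL_{R_\lambda}J)J\zeta,
\]
and applying $-J$ together with $J(\CL_{R_\lambda}J)\zeta=-(\CL_{R_\lambda}J)J\zeta$ and $J(\CL_{R_\lambda}J)J\zeta=(\CL_{R_\lambda}J)\zeta$ yields
\[
\nabla_x^\pi\zeta+J\nabla_y^\pi\zeta
-\tfrac12\,\lambda\Big(\tfrac{\del w}{\del y}\Big)(\CL_{R_\lambda}J)\zeta
+\tfrac12\,\lambda\Big(\tfrac{\del w}{\del x}\Big)(\CL_{R_\lambda}J)J\zeta=0,
\]
i.e.\ the two scalar coefficients land on the \emph{opposite} zeroth-order operators from those in \eqref{eq:fundamental-isothermal}; one can check that no orientation choice $j\del_x=\pm\del_y$ restores the printed placement while keeping the first-order part $\nabla_x^\pi\zeta+J\nabla_y^\pi\zeta$ with $\zeta=\pi\,\del w/\del x$. (The display above can be confirmed independently of \eqref{eq:Laplacian-w} by writing $dw=d^\pi w+w^*\lambda\otimes R_\lambda$, using $d^\nabla(dw)=w^*T$, $\nabla_Y R_\lambda=\tfrac12(\CL_{R_\lambda}J)JY$ and $T^\pi(J\zeta,\zeta)=0$ on-shell.) Note also that \eqref{eq:fundamental-isothermal} and \eqref{eq:equation-for-zeta0} in the same paper are not consistent with each other, so an index/sign typo is certainly in circulation; but as written your argument neither derives the statement as printed nor detects the discrepancy. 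To complete it you must actually carry out the transcription above and either exhibit a convention under which the printed coefficients arise or record that the correct formula has $\lambda(\del w/\del x)$ and $\lambda(\del w/\del y)$ interchanged relative to \eqref{eq:fundamental-isothermal}.
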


The fundamental equation in cylindrical (or strip-like) coordinates is nothing but the linearization equation of the contact Cauchy-Riemann
equation in the direction
$\frac{\del}{\del\tau}$. This  plays an important role in the derivation
of the exponential decay of the derivatives at cylindrical ends.
(See \cite[Part II]{oh-wang:CR-map2}.)

\subsection{Generic nondegeneracy of Reeb orbits and of Reeb chords}

Nondegeneracy of closed Reeb orbits or of Reeb chords
is fundamental in the Fredholm property of the linearized operator of
contact instanton equations as well as of pseudoholomorphic curves on symplectization.

\subsubsection{The case of closed Reeb orbits}

Let $\gamma$ be a closed Reeb orbit of period $T \neq 0$. In other words,
$\gamma: \R \to M$ is a solution of $\dot x = R_\lambda(x)$ satisfying
$\gamma(T) = \gamma(0)$.
By definition, we can write $\gamma(T) = \phi^T_{R_\lambda}(\gamma(0))$
for the Reeb flow $\phi^T_{R_\lambda}$ of the Reeb vector field $R_\lambda$.
Therefore if $\gamma$ is a closed orbit, then we have
$$
\phi^T_{R_\lambda}(\gamma(0)) = \gamma(0)
$$
i.e.,
$p = \gamma(0)$ is a fixed point of the diffeomorphism $\phi^T_{R_\lambda}$.
Since $\CL_{R_\lambda}\lambda = 0$, $\phi^T_{R_\lambda}$ is a (strict) contact diffeomorphism and so
induces an isomorphism
$$
d\phi^T_{R_\lambda}(p)|_{\xi_p}: \xi_p \to \xi_p
$$
which is the linearization restricted to $\xi_p$ of the Poincar\'e return map.

\begin{defn} We say a $T$-closed Reeb orbit $(T,\lambda)$ is \emph{nondegenerate}
if $d\phi^T_{R_\lambda}(p)|_{\xi_p}:\xi_p \to \xi_p$ with $p = \gamma(0)$ has not eigenvalue 1.
\end{defn}

The following generic nondegeneracy result is proved by
Albers-Bramham-Wendl in \cite{albers-bramham-wendl}. 

\begin{thm}[Albers-Bramham-Wendl] \label{thm:ABW}
Let $(Q,\xi)$ be a contact manifold. Then there exists
a residual subset $\CC^{\text{\rm reg}}(Q,\xi) \subset \CC(Q,\xi)$ such that
for any contact form $\lambda \in \CC^{\text{\rm reg}}(Q,\xi)$ all
Reeb orbits are nondegenerate for $T\neq 0$.
\end{thm}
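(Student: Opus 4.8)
The plan is to prove this by a Sard--Smale argument on a universal moduli space of closed Reeb orbits fibered over the space $\CC(Q,\xi)$ of contact forms with kernel $\xi$, after reducing to bounded period and then intersecting over a countable family of period cutoffs. First I would fix a reference contact form $\lambda_0$ and identify $\CC(Q,\xi)$ with the space of positive functions via $\lambda = f\lambda_0$, $f>0$, so that perturbing the contact form means perturbing $f$. A direct computation from $\iota_{R_\lambda}\lambda=1$, $\iota_{R_\lambda}d\lambda=0$ gives the explicit form
\[
R_{f\lambda_0} = \tfrac1f R_{\lambda_0} + Y_f, \qquad Y_f \in \xi,\quad \iota_{Y_f}d\lambda_0|_\xi = \tfrac1{f^2}\,df|_\xi,
\]
so an infinitesimal variation $f = 1+sh$ changes $R_\lambda$ (at $\lambda_0$) by a vector field whose $R_{\lambda_0}$-component is $-h$ and whose $\xi$-component is the $d\lambda_0|_\xi$-dual of $dh|_\xi$. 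For each $T_0>0$ let $\CC^{T_0}\subset\CC(Q,\xi)$ be the set of contact forms all of whose closed Reeb orbits of period $\le T_0$ are nondegenerate. Since $R_\lambda$ is nowhere vanishing on the compact manifold $Q$ there is a standard positive lower bound $T\ge T_{\min}>0$ for periods of closed orbits, and orbits with $T\le T_0$ form a $C^\infty$-compact set modulo $S^1$-reparametrization; hence for $\lambda\in\CC^{T_0}$ these orbits are finite in number and persist under small perturbations, with no new orbit of period $\le T_0$ able to appear. With the cutoffs $T_0$ chosen in the gaps of the period spectra, each $\CC^{T_0}$ is then \emph{open}, so it suffices to prove density: the desired residual set is $\CC^{\mathrm{reg}}(Q,\xi) = \bigcap_n \CC^{T_0^{(n)}}$ by the Baire category theorem.

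For density I would form the universal moduli space, using a Banach completion of $\CC(Q,\xi)$ together with Floer's $C^\varepsilon$-trick so that $C^\infty$ perturbations survive, and $W^{k,p}$-loops:
\[
\CM^{T_0} = \bigl\{(\gamma,T,\lambda)\ :\ T\in(0,T_0],\ \gamma:S^1\to Q,\ \dot\gamma = T\,R_\lambda(\gamma)\bigr\}\big/\,S^1.
\]
At fixed $\lambda$, the linearization of the defining equation in the $(\gamma,T)$-directions is Fredholm of index $0$ (a first-order ODE operator on $S^1$, corrected by the $\R$-factor for $T$ and by the $S^1$-reparametrization), and its cokernel is canonically identified with $\ker\bigl(d\phi^T_{R_\lambda}(\gamma(0))|_\xi - \mathrm{id}\bigr)$. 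The transversality claim is that the full linearization, also varying $\lambda$, is surjective: for a \emph{simple} orbit $\gamma$ a variation $h$ supported in a small ball about a point $\gamma(t_0)$ of its embedded image produces a perturbation of $R_\lambda$ localized near $t=t_0$ along $\gamma$ whose value at $\gamma(t_0)$ is an arbitrary vector of $T_{\gamma(t_0)}Q$ (the $R_{\lambda_0}$-component from $h(\gamma(t_0))$, the $\xi$-component from $dh|_\xi$ at $\gamma(t_0)$, chosen independently), and by the usual unique-continuation argument for the adjoint ODE such perturbations annihilate the finite-dimensional cokernel. Multiply-covered orbits $\gamma=\gamma_0^{(k)}$ are handled via the underlying simple orbit $\gamma_0$: nondegeneracy of $\gamma_0^{(k)}$ amounts to $d\phi^{T_0}_{R_\lambda}(\gamma_0(0))|_\xi$ having no $k$-th root of unity among its eigenvalues, so it suffices to arrange, by the transversal perturbations localized near $\gamma_0$, that this linearized return map has no root-of-unity eigenvalue at all, which makes all iterates nondegenerate at once. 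Granting transversality, $\CM^{T_0}$ is a Banach manifold and the projection $\Pi^{T_0}:\CM^{T_0}\to\CC(Q,\xi)$ is Fredholm of index $0$ modulo $S^1$; Sard--Smale gives a residual, hence dense, set of regular values, and the Taubes/Floer regularization upgrades this to a dense set of \emph{smooth} contact forms.

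The final ingredient is the equivalence that $\lambda$ is a regular value of $\Pi^{T_0}$ exactly when every closed Reeb orbit of $\lambda$ of period $\le T_0$ is nondegenerate. Surjectivity of $D\Pi^{T_0}$ at $(\gamma,T,\lambda)$ forces the $(\gamma,T)$-linearization itself to be surjective, i.e.\ its cokernel $\ker\bigl(d\phi^T_{R_\lambda}(\gamma(0))|_\xi - \mathrm{id}\bigr)$ to vanish, which is precisely the nondegeneracy condition recalled above; conversely, nondegeneracy of all such orbits yields fiberwise surjectivity and hence regularity of the value $\lambda$. Combining with the preceding paragraph, each $\CC^{T_0}$ is open and dense, and the countable intersection over cutoffs $T_0^{(n)}\to\infty$ produces the residual set $\CC^{\mathrm{reg}}(Q,\xi)$ of the theorem.

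I expect the main obstacle to be the transversality step, and within it the feature genuinely special to contact geometry: one may perturb only the contact \emph{form}, not the contact \emph{structure}, so the admissible perturbations are ``functions'' rather than arbitrary $1$-forms, and one must check --- through the explicit formula for $R_{f\lambda_0}$ above --- that these nonetheless generate a sufficiently rich family of deformations of $R_\lambda$ along any simple orbit. The remaining technical points are more routine: the reduction to the underlying simple orbit for multiply-covered orbits (a perturbation localized on the manifold meets the orbit at several times, so one must argue at the level of the return map), the lower bound on periods and the choice of period cutoffs in the gaps of the period spectra, and the $C^\varepsilon$-completion and elliptic bootstrapping needed to stay in the $C^\infty$ category.
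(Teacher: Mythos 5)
The paper itself offers no proof of this statement: it is quoted from Albers--Bramham--Wendl \cite{albers-bramham-wendl}, and the closest argument actually carried out by the authors is the relative (chord) version, Theorem B.3 of \cite{oh:contacton-transversality}. Your sketch reproduces the standard Sard--Smale proof used in both places: identify $\CC(Q,\xi)$ with $\{f\lambda_0 : f>0\}$, form the universal moduli space of isospeed closed orbits of period $\le T_0$, show that the $f$-variations surject onto the cokernel along a simple orbit using the explicit formula for $R_{f\lambda_0}$ together with unique continuation for the adjoint ODE, identify regular values of the projection with forms whose orbits of period $\le T_0$ are all nondegenerate, and intersect over a countable family of cutoffs. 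Your formula for $R_{f\lambda_0}$ and the resulting infinitesimal variation (the $-h$ component along $R_{\lambda_0}$ and the $d\lambda_0|_\xi$-dual of $dh|_\xi$) are correct, as is the openness argument for $\CC^{T_0}$.

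The one step stated too quickly is the multiple-cover reduction. You cannot, in a single transversality step, ``arrange that the linearized return map has no root-of-unity eigenvalue at all'': the locus of symplectic matrices having \emph{some} root of unity in their spectrum is a countable union of positive-codimension walls, so its complement is residual but not open, and avoiding it is not a single open-dense condition. The standard fix is to introduce, for each pair $(k,T_0)$, the set of forms whose simple orbits of period $\le T_0$ satisfy $\det\bigl((d\phi^{T}_{R_\lambda}(p)|_{\xi_p})^{k}-\mathrm{id}\bigr)\neq 0$; each such set is open and dense by your transversality argument applied to the $k$-fold cover, and one intersects over $k$ as well as over $T_0$. This is where the genuine work on multiple covers sits, which you flag but do not resolve: a perturbation $h$ supported in a flow box is traversed $k$ times by the cover, so surjectivity onto the cokernel of the covered orbit's equation does not follow from the simple-orbit case by localization at a single point alone; one must argue at the level of the linearized return map of the underlying simple orbit, as in \cite{albers-bramham-wendl}. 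Since your scheme is already organized as a Baire intersection, incorporating the extra index $k$ is a reorganization rather than a new idea, but it is needed for the multiple-cover step to be correct.
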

(The case $T = 0$ can be included as the Morse-Bott nondegenerate case
if we allow the action $T = 0$ by extending the definition of a Reeb trajectory
to \emph{isospeed Reeb chords}  of the pairs $(\gamma,T)$ with
$\gamma:[0,1] \to Q$ with $T = \int \gamma^*\lambda$ as done in
\cite{oh:entanglement1,oh:contacton-transversality}.)

\subsubsection{The case of Reeb chords}
Let $R_0, \, R_1$ be a pair of Legendrian submanifolds.
We first recall the notion of iso-speed Reeb trajectories used in
\cite{oh:entanglement1} and recall the definition of
nondegeneracy of thereof.

Consider contact triads $(Q,\lambda,J)$ and the boundary
value problem
for $(\gamma, T)$ with $\gamma:[0,1] \to Q$
\be\label{eq:chord-equation}
\begin{cases}
\dot \gamma(t) = T R_\lambda(\gamma(t)),\\
\gamma(0) \in R_0, \quad \gamma(1) \in R_1.
\end{cases}
\ee
\begin{defn}[Isospeed Reeb trajectory; Definition 2.1 \cite{oh:contacton-transversality}]
We call a pair $(\gamma,T)$
of a smooth curve $\gamma:[0,1] \to Q$ and $T \in \R$
an \emph{iso-speed Reeb trajectory} if they satisfy
\be\label{eq:isospeed-chords}
\dot \gamma(t) = T R_\lambda(\gamma(t)), \quad \int \gamma^*\lambda = T
\ee
for all $t \in [0,1]$. We call $(\gamma, T)$ an iso-speed closed Reeb orbit
if $\gamma(0) = \gamma(1)$, and an iso-speed Reeb chord of $(R_0,R_1)$
it $\gamma(0) \in R_0$ and $\gamma(1) \in R_1$ from $R_0$ to $R_1$.
\end{defn}

With this definition, we state the corresponding notion of nondegeneracy

\begin{defn}\label{defn:nondegeneracy-chords}
We say a Reeb chord $(\gamma, T)$ of $(R_0,R_1)$ is nondegenerate if
the linearization map $d\phi^T_{R_\lambda}(p): \xi_p \to \xi_p$ satisfies
$$
d\phi^T_{R_\lambda}(p)(T_{\gamma(0)} R_0) \pitchfork T_{\gamma(1)} R_1  \quad \text{\rm in }  \,  \xi_{\gamma(1)}
$$
or equivalently
$$
d\phi^T_{R_\lambda}(p)(T_{\gamma(0)} R_0) \pitchfork T_{\gamma(1)} Z_{R_1} \quad \text{\rm in} \, T_{\gamma(1)}Q.
$$
Here $\phi^t_{R_\lambda}$ is the flow generated by the Reeb vector field $R_\lambda$.
\end{defn}
More generally, we consider the following situation.
We recall the definition of \emph{Reeb trace} $Z_R$ of a Legendrian submanifold $R$, which is defined to be
$$
Z_R: = \bigcup_{t \in \R} \phi_{R_\lambda}^t(R).
$$
(See \cite[Appendix B]{oh:contacton-transversality} for detailed discussion on its genericity.)

\begin{defn}[Nondegeneracy of Legendrian links]
\label{defn:nondegeneracy-links}
Let $\vec{R}=(R_1,\cdots,R_k)$ be a chain of Legendrian submanifolds,
which we call a (ordered) Legendrian link. We say that the Legendrian link $\vec R$ 
is \emph{nondegenerate} if 
it satisfies
$$
Z_{R_i} \pitchfork R_j
$$
for all $i, \, j= 1,\ldots, k$.
\end{defn}

We now provide the off-shell framework for the proof of nondegeneracy
in general. Denote $\CL(Q)=C^\infty(S^1,Q)$ the space of loops $z: S^1 = \R /\Z \to Q$.
We denote by
$$
\CC(Q,\xi)
$$
the set of contact forms of $(Q,\xi)$ equipped with $C^\infty$-topology.
We denote by $\LL(Q;R_0,R_1)$ the space of paths
$$
\gamma: ([0,1], \{0,1\}) \to (Q;R_0,R_1).
$$
We consider the assignment
\be\label{eq:Phi-TR}
\Phi: (T,\gamma,\lambda) \mapsto \dot \gamma - T \,R_\lambda(\gamma)
\ee
as a section of the Banach vector bundle over
$$
(0,\infty) \times \CL^{1,2}(Q;R_0,R_1) \times \CC(Q,\xi)
$$
where $\CL^{1,2}(Q;R_0,R_1)$
is the $W^{1,2}$-completion of $\CL(Q;R_0,R_1)$. We have
$$
\dot \gamma - T\, R_\lambda(\gamma) \in \Gamma(\gamma^*TQ; T_{\gamma(0)}R_0, T_{\gamma(1)}R_1).
$$
We  define the vector bundle
$$
\CL^2(R_0,R_1) \to (0,\infty) \times \CL^{1,2}(Q;R_0,R_1) \times \CC(Q,\xi)$$
whose fiber at $(T,\gamma,\lambda)$ is $L^2(\gamma^*TQ)$. We denote by
$\pi_i$, $i=1,\, 2, \, 3$ the corresponding projections as before.

We denote $\mathfrak{Reeb}(M,\lambda;R_0,R_1) = \Phi_\lambda^{-1}(0)$,
where
$$
\Phi_\lambda: = \Phi|_{ (0,\infty) \times \CL^{1,2}(Q;R_0,R_1) \times \{\lambda\}}.
$$
Then we have
$$
\mathfrak{Reeb}(\lambda;R_0,R_1) =  \Phi_\lambda^{-1}(0) = \mathfrak{Reeb}(Q,\xi) \cap \pi_3^{-1}(\lambda).
$$
The following relative version of Theorem \ref{thm:ABW} is proved in
\cite[Appendix B]{oh:contacton-transversality}.

\begin{thm} [Perturbation of contact forms;
Theorem B.3 \cite{oh:contacton-transversality}]
\label{thm:Reeb-chord-lambda}
Let $(Q,\xi)$ be a contact manifold. Let  $(R_0,R_1)$ be a pair of Legendrian submanifolds
allowing the case $R_0 = R_1$.  There
exists a residual subset $\CC^{\text{\rm reg}}_1(Q,\xi) \subset \CC(Q,\xi)$
such that for any $\lambda \in \CC^{\text{\rm reg}}_1(Q,\xi)$ all
Reeb chords from $R_0$ to $R_1$ are nondegenerate for $T \neq 0$ and
Bott-Morse nondegenerate when $T = 0$.
\end{thm}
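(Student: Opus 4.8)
The plan is to run the standard Sard--Smale universal-moduli-space argument on the section $\Phi$ of the Banach bundle $\CL^2(R_0,R_1)$ set up above. I would first show that the universal space $\mathfrak{Reeb}(Q,\xi)=\Phi^{-1}(0)$ is a Banach manifold by proving that $\Phi$ is transverse to the zero section; granting this, the projection $\pi_3\colon\mathfrak{Reeb}(Q,\xi)\to\CC(Q,\xi)$ is a Fredholm map of index $0$ (the free parameter $T$ compensating the index $-1$ of the Lagrangian boundary-value operator appearing below), so the Sard--Smale theorem produces a residual subset $\CC^{\text{\rm reg}}_1(Q,\xi)$ of regular values of $\pi_3$. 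Two routine technicalities are disposed of at the end: $\CC(Q,\xi)$ is only a Fréchet space, which is handled by Taubes's device (or the Floer $C^\varepsilon$ trick) of replacing it by a countable family of genuine Banach manifolds and taking a countable intersection of the resulting residual sets; and the parameter $T$ is confined to exhausting compact intervals $[1/n,n]$, again followed by a countable intersection.

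\emph{Transversality of $\Phi$ is the crux.} At a zero $(T,\gamma,\lambda)$ the vertical linearization, restricted to the $(T,\gamma)$-directions, is
$$
D_{(T,\gamma)}\Phi\colon (\tau,\zeta)\longmapsto \nabla_t\zeta - T\,\nabla_\zeta R_\lambda - \tau\,R_\lambda(\gamma),
$$
acting on sections $\zeta$ of $\gamma^*TQ$ with $\zeta(0)\in T_{\gamma(0)}R_0$ and $\zeta(1)\in T_{\gamma(1)}R_1$; this is a zeroth-order perturbation of $\nabla_t$, hence Fredholm between the $W^{1,2}$- and $L^2$-completions, but not onto in general, so surjectivity of the full $D\Phi$ must come from the $\lambda$-variations. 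Differentiating the defining relations $R_{\lambda_s}\rfloor\lambda_s=1$ and $R_{\lambda_s}\rfloor d\lambda_s=0$ along $\lambda_s=\lambda+s\mu$ produces a first-order expression $X_\mu$ for $\frac{d}{ds}R_{\lambda_s}\big|_{s=0}$ in terms of $\mu$ and $d\mu$, so that $D_\lambda\Phi(\mu)=-T\,X_\mu(\gamma)$. If $\eta\in L^2(\gamma^*TQ)$ annihilates the image of $D\Phi$, then orthogonality to the $(T,\gamma)$-part plus elliptic regularity makes $\eta$ smooth and a solution of the formal adjoint ODE, while orthogonality to every $D_\lambda\Phi(\mu)$ gives $\int_0^1\langle\eta(t),X_\mu(\gamma(t))\rangle\,dt=0$ for all admissible $\mu$. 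Since $T\neq0$ the curve $\gamma$ is an immersion, and a short coordinate computation shows that $\mu\mapsto X_\mu$ is pointwise surjective onto $T_qQ$ using the first jet of $\mu$ at $q$; choosing $\mu$ supported in a shrinking neighborhood of $\gamma(t_0)$ and using the adjoint ODE to relate the values of $\eta$ at the finitely many preimage times then forces $\eta(t_0)=0$, hence $\eta\equiv0$ and $\Phi\pitchfork 0$. This is the main obstacle: pushing a one-form perturbation through the Reeb-field construction to obtain the local surjectivity of $\mu\mapsto X_\mu$, together with the somewhere-injectivity bookkeeping needed when $\gamma$ traverses part of a closed Reeb orbit (so that $\gamma^{-1}$ of a small neighborhood is a union of intervals), which one handles by relating $\eta$ at the several preimage times via the adjoint transition maps, or by reducing to the underlying simple chord.

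Finally I would read off the conclusion. For $\lambda\in\CC^{\text{\rm reg}}_1(Q,\xi)$ and any Reeb chord $(\gamma,T)$ with $T\neq0$, regularity of $\lambda$ together with $\Phi\pitchfork 0$ forces $D_{(T,\gamma)}\Phi$ to be surjective, hence---its index being $0$---injective, i.e.\ $\ker D_{(T,\gamma)}\Phi=0$. Transporting the homogeneous linearized equation by the Reeb flow $d\phi^t_{R_\lambda}$, and using that $\phi^t_{R_\lambda}$ preserves $\xi$ while $R_\lambda\notin\xi$ (so that the parameter $\tau$, and any reparametrization direction, is killed by the Legendrian boundary conditions), identifies $\ker D_{(T,\gamma)}\Phi$ with $d\phi^T_{R_\lambda}(p)\bigl(T_{\gamma(0)}R_0\bigr)\cap T_{\gamma(1)}R_1$; its vanishing is precisely nondegeneracy in the sense of Definition \ref{defn:nondegeneracy-chords}. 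The stratum $T=0$ consists of the constant curves at points of $R_0\cap R_1$, where the linearized operator is simply $\nabla_t$ with the Legendrian boundary data and its kernel is the expected space $T_qR_0\cap T_qR_1$; this stratum is therefore automatically Bott--Morse nondegenerate (with critical manifold $R_0\cap R_1$, or all of $R$ when $R_0=R_1$, put in clean position) and imposes no extra constraint on $\lambda$, so one keeps the residual subset from the previous step. Taking the countable intersection over the $C^\varepsilon$-approximations and over the intervals $[1/n,n]$ then yields the desired residual subset $\CC^{\text{\rm reg}}_1(Q,\xi)\subset\CC(Q,\xi)$.
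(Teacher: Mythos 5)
Your overall strategy is the one the paper's setup is designed for: the paper does not actually prove Theorem \ref{thm:Reeb-chord-lambda} here (it defers the proof to Appendix B of \cite{oh:contacton-transversality}), but the universal section $\Phi(T,\gamma,\lambda)=\dot\gamma-T\,R_\lambda(\gamma)$ over $(0,\infty)\times\CL^{1,2}(Q;R_0,R_1)\times\CC(Q,\xi)$ that you feed into Sard--Smale is exactly the off-shell framework recalled in the text, and most of your ingredients are right: the index count (the $T$-direction compensating the index $-1$ of the two-point Lagrangian boundary value problem), the identification of $\ker D_{(T,\gamma)}\Phi$ with $d\phi^T_{R_\lambda}(p)\bigl(T_{\gamma(0)}R_0\bigr)\cap T_{\gamma(1)}R_1$ via the $\xi$-preserving linearized flow, the pointwise surjectivity of the Reeb-field variation in terms of the $1$-jet of $h$ (note $T_\lambda\CC(Q,\xi)=\{h\lambda\}$, so only these variations are admissible, and they do suffice), and the Taubes/Floer $C^\varepsilon$ device plus the exhaustion $T\in[1/n,n]$.

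The genuine gap is your treatment of non-embedded chords. A Reeb chord fails to be injective only when its image lies on a closed Reeb orbit, say of minimal period $T_{\min}$, and then shrinking the support of $h$ near a point $q$ only yields that the \emph{sum} of the annihilator $\eta$ over the preimage times of $q$ vanishes. When $T/T_{\min}\notin\Z$ one can still conclude, using the jump of the covering multiplicity at the chord endpoint together with the first-order adjoint ODE, that $\eta$ vanishes at one time and hence everywhere; but when $T$ is an exact multiple of $T_{\min}$ (forcing $\gamma(0)=\gamma(1)\in R_0\cap R_1$) the multiplicity is constant away from that single point, and the argument only produces $\eta(0)=\eta(1)$, which together with the adjoint boundary conditions does not force $\eta\equiv0$. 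Your two suggested fixes do not close this: ``relating $\eta$ at the preimage times via the adjoint transition maps'' is precisely what degenerates in this case, and ``reducing to the underlying simple chord'' has no meaning for chords (there is no covering of boundary value problems). This configuration is the chord analogue of the multiply covered closed orbit, which is exactly why the proof of Theorem \ref{thm:ABW} in \cite{albers-bramham-wendl} requires a separate, more delicate perturbation analysis of the return map rather than a bare universal-moduli argument; a complete proof must treat it explicitly, e.g.\ by perturbing the return map of the underlying closed orbit so that all relevant iterates move $d\phi^T_{R_\lambda}(T_{\gamma(0)}R_0)$ off $T_{\gamma(1)}R_1$. A smaller point: at $T=0$ the Morse--Bott property is automatic only when $R_0\cap R_1$ is clean (e.g.\ $R_0=R_1$); your parenthetical ``put in clean position'' is doing real work there, and a $\lambda$-perturbation, which moves neither $R_0$ nor $R_1$, cannot supply it.
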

The following theorem is also proved in \cite{oh:contacton-transversality}.

\begin{thm}[Perturbation of boundaries;
Theorem B.10 \cite{oh:contacton-transversality}]
\label{thm:Reeb-chords-MB}
Let $(Q,\xi)$ be a contact manifold. Let  $(R_0,R_1)$ be a pair of Legendrian submanifolds allowing the case $R_0 = R_1$.
 For a given contact form $\lambda$ and $R_1$,
there exists a residual subset
$$
R_0 \in{\mathcal Leg}^{\text{\rm reg}}(Q,\xi) \subset {\mathcal Leg}(Q,\xi)
$$
of Legendrian submanifolds such that for all $R_0 \in {\mathcal Leg}^{\text{\rm reg}}(Q,\xi)$ all
Reeb chords from $R_0$ to $R_1$ are nondegenerate for $T \neq  0$ and Morse-Bott nondegenerate when $T = 0$.
\end{thm}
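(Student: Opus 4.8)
The statement is the ``perturb-the-boundary'' counterpart of Theorem~\ref{thm:Reeb-chord-lambda}, and the plan is to prove it by the same Sard--Smale parametric transversality scheme, now taking the Legendrian submanifold $R_0$ as the parameter. The essential simplification compared with Theorem~\ref{thm:Reeb-chord-lambda} is that, with $\lambda$ fixed, the chord problem \eqref{eq:chord-equation} reduces to a genuinely \emph{finite-dimensional} one: a Reeb trajectory $(\gamma,T)$ with $\gamma(0)=p$ is determined by $(p,T)$ through $\gamma(t)=\phi^{tT}_{R_\lambda}(p)$, it is a chord from $R_0$ to $R_1$ precisely when $p\in R_0$ and $\phi^T_{R_\lambda}(p)\in R_1$, and its nondegeneracy in the sense of Definition~\ref{defn:nondegeneracy-chords} (together with the $T=0$ condition $Z_{R_0}\pitchfork R_1$ of Definition~\ref{defn:nondegeneracy-links}) is exactly the assertion that the evaluation map $(p,T)\mapsto\phi^T_{R_\lambda}(p)$, restricted to $R_0\times\R$, is transverse to $R_1$ at the point in question. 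So I would first fix a reference Legendrian $R_0^{\mathrm{ref}}$ and a contactomorphism of a tubular neighborhood of it onto a neighborhood of the zero section in $J^1(R_0^{\mathrm{ref}})$; the Legendrians $C^\ell$-close to $R_0^{\mathrm{ref}}$ are then the $1$-jet graphs $R_0^f$ of small functions $f\in C^\ell(R_0^{\mathrm{ref}})$, with embeddings $\iota_f:R_0^{\mathrm{ref}}\hookrightarrow Q$ onto them, and I would study the universal evaluation map
$$
G:\ R_0^{\mathrm{ref}}\times\R\times\mathcal U\longrightarrow Q,\qquad G(x,T,f)=\phi^T_{R_\lambda}\bigl(\iota_f(x)\bigr),
$$
where $\mathcal U\subset C^\ell(R_0^{\mathrm{ref}})$ is a small neighborhood of $0$.

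The heart of the matter --- and the only computation that really matters --- is that $G$ is a submersion along $G^{-1}(R_1)$. Differentiating in $f$ alone, a variation $\delta f$ displaces $\iota_f(x)$ by the vector which, under the chosen contactomorphism, corresponds to the $1$-jet $\bigl(\delta f(x),\,d(\delta f)_x\bigr)\in\R\oplus T^*_xR_0^{\mathrm{ref}}$; as $\delta f$ runs over $C^\ell(R_0^{\mathrm{ref}})$ this value sweeps out the whole fiber of $J^1(R_0^{\mathrm{ref}})\to R_0^{\mathrm{ref}}$, which is a linear complement of $d\iota_f\bigl(T_xR_0^{\mathrm{ref}}\bigr)$ inside $T_{\iota_f(x)}Q$. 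Composing with the isomorphism $d\phi^T_{R_\lambda}$ and adding the contribution of $\partial_xG$ therefore yields all of $T_{G(x,T,f)}Q$, so $G$ is a submersion; in particular $\mathcal M:=G^{-1}(R_1)$ is a $C^\ell$ Banach manifold, and the natural projection $\pi:\mathcal M\to\mathcal U$ is a Fredholm map (its linearization at a chord has finite-dimensional kernel and cokernel, the latter because $\partial_fG$ factors through the finite-rank $1$-jet evaluation $f\mapsto\bigl(f(x),df_x\bigr)$) of index
$$
\operatorname{ind}\pi=\bigl(\dim R_0^{\mathrm{ref}}+1\bigr)-\operatorname{codim}_Q R_1=(n+1)-(n+1)=0,
$$
where $n=\dim R_0=\dim R_1$ and $\dim Q=2n+1$. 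By the Sard--Smale theorem (valid for $\ell$ large) the regular values of $\pi$ form a residual subset of $\mathcal U$, and for each regular value $f$ the map $G(\cdot,\cdot,f)$ is transverse to $R_1$; equivalently, for such $R_0^f$ every Reeb chord to $R_1$ with $T\neq0$ is nondegenerate and $Z_{R_0^f}\pitchfork R_1$ holds, which is the asserted Morse--Bott nondegeneracy along $T=0$.

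It remains to upgrade this to a residual subset of $\mathcal{Leg}(Q,\xi)$ in the $C^\infty$-topology, for which I would use the standard Taubes-type argument: cover $\mathcal{Leg}(Q,\xi)$ by countably many tubular-neighborhood charts as above, let $\mathcal{Leg}^{\mathrm{reg}}_N(Q,\xi)$ be the set of Legendrians $R_0$ all of whose Reeb chords to $R_1$ with $0<|T|\le N$ are nondegenerate and which satisfy $Z_{R_0}\pitchfork R_1$, note that each $\mathcal{Leg}^{\mathrm{reg}}_N$ is open (by $C^0$-compactness of the set of such chords --- using that $Q$ is closed --- and openness of nondegeneracy) and $C^\infty$-dense (by the $C^\ell$ statement above together with $C^\infty$-approximation of $C^\ell$ Legendrians through Legendrians), and take $\mathcal{Leg}^{\mathrm{reg}}(Q,\xi)=\bigcap_{N\ge1}\mathcal{Leg}^{\mathrm{reg}}_N(Q,\xi)$. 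I expect the main obstacle to be not any single hard estimate but rather the careful bookkeeping in the first two steps: equipping the space of Legendrian embeddings with a Banach manifold structure and setting up the $1$-jet tubular-neighborhood identification so that the computation of $\partial_fG$ is clean and uniform in $f$, and handling the $T=0$ stratum so that it produces precisely the Morse--Bott nondegeneracy of Definition~\ref{defn:nondegeneracy-chords} rather than a merely transverse intersection. Once those are in place, the Sard--Smale and Taubes steps run exactly as in \cite[Appendix B]{oh:contacton-transversality} and in the proof of Theorem~\ref{thm:Reeb-chord-lambda}.
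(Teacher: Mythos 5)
Your proposal is sound, but it takes a genuinely different route from the one the paper relies on. The paper itself does not reprove the statement: it recalls the off-shell framework of Section 2.2 --- the section $\Phi(T,\gamma,\cdot)=\dot\gamma - T\,R_\lambda(\gamma)$ of a Banach bundle over $(0,\infty)\times \CL^{1,2}(Q;R_0,R_1)\times(\text{perturbation data})$ --- and defers the proof to \cite[Appendix B]{oh:contacton-transversality}, where the boundary perturbation is implemented in that same path-space setting (nearby Legendrians parametrized by functions through the one-jet neighborhood theorem, with the moving boundary condition $\gamma(0)\in R_0^f$ entering the universal moduli space, and Sard--Smale applied to the projection). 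You instead integrate the chord ODE once and for all and do parametric transversality for the finite-dimensional flow-evaluation map $G(x,T,f)=\phi^T_{R_\lambda}(\iota_f(x))$, with the one-jet functions $f$ as the Banach parameter. Your key computation is correct: the vertical $1$-jet variations $(\delta f(x),d(\delta f)_x)$ span a complement of $T_{\iota_f(x)}R_0^f$, so $G$ is a submersion, $\pi:G^{-1}(R_1)\to\mathcal U$ has index $(n+1)-(n+1)=0$, and regular values give exactly the condition $d\phi^T_{R_\lambda}(T_pR_0^f)+\R R_\lambda(\gamma(1))+T_{\gamma(1)}R_1=T_{\gamma(1)}Q$, which (since $T_{\gamma(1)}Z_{R_1}=T_{\gamma(1)}R_1\oplus\R R_\lambda(\gamma(1))$) is precisely the nondegeneracy of Definition \ref{defn:nondegeneracy-chords} for $T\neq 0$, and at $T=0$ is precisely $Z_{R_0^f}\pitchfork R_1$, i.e.\ the Definition \ref{defn:nondegeneracy-links}-type condition the paper uses as Morse--Bott nondegeneracy of the constant chords (the originally allowed case $R_0=R_1$ causes no trouble because the generic perturbed $R_0^f$ is distinct from $R_1$). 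What your route buys is elementarity and transparency: no Banach manifold of $W^{1,2}$ paths, no linearized chord operator, only smoothness of the Reeb flow plus Sard--Smale for a map with finite-dimensional target. What the paper's route buys is uniformity: the same function-space framework is used for the contact-instanton Fredholm theory, so the linearized statements and asymptotic-operator formulations come out in the form used elsewhere in the paper.

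Two small points you should make explicit when writing this up. First, the openness step of your Taubes argument (for chords with $0<|T|\le N$) needs the relevant chords to stay in a compact region, which is automatic only when $Q$ is closed or one restricts to a compact part; the paper's quoted statements carry the same implicit hypothesis, but your parenthetical ``using that $Q$ is closed'' should be promoted to an explicit assumption or remark. Second, the identification of evaluation-map transversality with the paper's notion of (Morse--Bott) nondegeneracy at the two strata $T\neq 0$ and $T=0$ deserves a short displayed lemma rather than the implicit use you give it, since it is the only place where the precise Definitions \ref{defn:nondegeneracy-chords} and \ref{defn:nondegeneracy-links} enter.
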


We refer readers to \cite[Appendix B]{oh:contacton-transversality} for the proofs of these results.

\subsection{Exponential asymptotic $C^\infty$ convergence of contact instantons}
\label{subsec:exponential-convergence}

In this section, we recall the asymptotic behavior of contact instantons
on the Riemann surface $(\dot\Sigma, j)$ associated with a metric $h$ with \emph{cylinder-like ends} for the closed string context and with
\emph{strip-like ends} for the open string context.

We assume there exists a compact set $K_\Sigma\subset \dot\Sigma$,
such that $\dot\Sigma-\Int(K_\Sigma)$ is a disjoint union of interior-punctured disks
 each of which is isometric to the half cylinder $[0, \infty)\times S^1$ or
 the half strip $(-\infty, 0]\times S^1$, where
the choice of positive or negative strips depends
on the choice of analytic coordinates at the punctures.
We denote by $\{p^+_i\}_{i=1, \cdots, l^+}$ the positive punctures, and by $\{p^-_j\}_{j=1, \cdots, l^-}$ the negative punctures. Here $l=l^++l^-$. The case of boundary-punctured disks is similar.
Then we denote by $\phi^{\pm}_i$ such cylinder-like or strip-like coordinates depending on
whether they are boundary or interior punctures. 

We separately describe the cases of interior punctures and of boundary punctures.
We first mainly state our assumptions for the study of the behavior of boundary punctures.
(The case of interior punctures is treated in \cite[Section 6]{oh-wang:CR-map1} and will be briefly mentioned at the end of this section.)

\begin{defn} Let $\dot\Sigma$ be a punctured Riemann surface of genus zero with boundary punctures
$\{p^+_i\}_{i=1, \cdots, l^+}\cup \{p^-_j\}_{j=1, \cdots, l^-}$ equipped
with a metric $h$ with \emph{strip-like ends} outside a compact subset $K_\Sigma$.
Let
$w: \dot \Sigma \to M$ be any smooth map with Legendrian boundary condition.
We define the total $\pi$-harmonic energy $E^\pi(w)$
by
\be\label{eq:pienergy}
E^\pi(w) = E^\pi_{(\lambda,J;\dot\Sigma,h)}(w) = \frac{1}{2} \int_{\dot \Sigma} |d^\pi w|^2
\ee
where the norm is taken in terms of the given metric $h$ on $\dot \Sigma$ and the triad metric on $M$.
\end{defn}

Throughout this section, we work locally near one boundary puncture
$p$, i.e., on a punctured semi-disc
$D^\delta(p) \setminus \{p\}$. By taking the associated conformal coordinates $\phi^+ = (\tau,t)
:D^\delta(p) \setminus \{p\} \to [0, \infty)\times [0,1]$ such that $h = d\tau^2 + dt^2$,
we need only look at a map $w$ defined on the semi-strip
 $[0, \infty)\times [0,1]$ without loss of generality.

Under the nondegeneracy hypothesis from Definition
\ref{defn:nondegeneracy-links} and the transversality hypothesis,
the exponential $C^\infty$ convergence from is derived 
from the subsequence convergence and charge-vanishing result in \cite{oh-wang:CR-map1}, \cite{oh-yso:index}.
For readers' convenience and for the self-containedness of the paper,
we recall the subsequence and charge in Appendix \ref{sec:charge-vanishing},
and the exponential convergence result in this subsection.

Suppose that the tuple $\vec R= (R_0, \ldots, R_k)$ are
transversal in the sense all pairwise Reeb chords are nondegenerate. In particular we assume
that the tuples are pairwise disjoint. 

Firstly, we state the following $C^0$-exponential convergence from \cite{oh-wang:CR-map2}
for the closed string and \cite{oh-yso:index} for the open string case.

\begin{prop}[Proposition 11.23 \cite{oh-wang:CR-map2}, Proposition 6.5 \cite{oh-yso:index}]
There exist some constants $C > 0$, $\delta>0$ and $\tau_0$ large such that for any $\tau>\tau_0$,
\beastar
\|d\left( w(\tau, \cdot), \gamma(\cdot) \right) \|_{C^0([0,1])} &\leq& C \, e^{-\delta \tau}
\eeastar
where we have $0 < \delta < |\mu_1|$ with the first negative eigenvalue $\mu_1$ of the
asymptotic operator $A_{(\lambda,J,\nabla)}^{\pi}$ of $\gamma$.
\end{prop}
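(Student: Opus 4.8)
\medskip

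\noindent\emph{Proof proposal.} The statement is local near a single puncture, so the plan is to pass to the strip-like (resp.\ cylinder-like) conformal coordinate $\phi^{+}=(\tau,t)$ of Section~\ref{subsec:exponential-convergence} and work on the half-strip $[0,\infty)\times[0,1]$ (resp.\ half-cylinder $[0,\infty)\times S^1$), on which $w$ is a contact instanton carrying the Legendrian boundary condition along $t\in\{0,1\}$. The subsequence-convergence and charge-vanishing results recalled in Appendix~\ref{sec:charge-vanishing} provide a sequence $\tau_k\to\infty$ along which $w(\tau_k,\cdot)\to\gamma$ in $C^\infty$ and along which the asymptotic charge vanishes; hence $(\gamma,T)$ with $T=\int\gamma^*\lambda$ is an iso-speed Reeb chord (resp.\ orbit), nondegenerate by hypothesis. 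Nondegeneracy is precisely the invertibility of the elliptic, formally self-adjoint operator $A^\pi_{(\lambda,J,\nabla)}$ on $L^2(\gamma^*\xi)$ equipped with the linearized Legendrian boundary conditions, so its spectrum has a gap around $0$ and $|\mu_1|$ (the first negative eigenvalue, in the numbering of the statement) is bounded away from $0$.

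The core estimate is exponential decay of the tail $\pi$-energy
\be
\mathcal E(\tau):=\frac12\int_{[\tau,\infty)\times[0,1]}|d^\pi w|^2 .
\ee
Writing $\zeta:=\pi\frac{\del w}{\del\tau}$, the Fundamental Equation of Theorem~\ref{thm:Laplacian-w}, in the isothermal form \eqref{eq:fundamental-isothermal}, is exactly the linearization of the contact instanton equation in the $\frac{\del}{\del\tau}$-direction; it is an asymptotically autonomous Cauchy--Riemann-type equation for $\zeta$ whose limiting operator along $\gamma$ is $A^\pi_{(\lambda,J,\nabla)}$ (Proposition~\ref{prop:A-intro}), the coefficients converging to their limits as $\lambda(\frac{\del w}{\del\tau})\to0$ and $\lambda(\frac{\del w}{\del t})\to T$. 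I would pair this first-order equation, together with the associated second-order Weitzenb\"ock identity for $d^\pi w$ obtained from \eqref{eq:Laplacian-w}, against $\zeta$ and integrate by parts over the slices $\{\tau\}\times[0,1]$; the $t$-boundary terms drop out by the Legendrian condition and the compatibility of the triad connection with the splitting $TQ=\xi\oplus\R R_\lambda$. For $h(\tau):=\int_{\{\tau\}\times[0,1]}|\zeta|^2$ this produces a differential inequality $h''(\tau)\ge(2\delta)^{2}\,h(\tau)$ valid for $\tau$ large and any prescribed $\delta<|\mu_1|$: the constant traces back to the spectral estimate $\|A^\pi\zeta\|_{L^2}\ge|\mu_1|\,\|\zeta\|_{L^2}$ on the subspace of finite-energy (decaying) modes, while the $\tau$-dependent and quadratic remainders are absorbed once $\|d(w(\tau,\cdot),\gamma)\|_{C^1}$ is small, which holds along the subsequence and, after a first nonsharp decay estimate, for all large $\tau$. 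Since $h$ is convex, satisfies $\int^\infty h<\infty$, and has $h(\tau_k)\to0$, a maximum-principle comparison with $\tau\mapsto e^{-2\delta\tau}$ gives $h(\tau)\le Ce^{-2\delta\tau}$, hence $\mathcal E(\tau)\le C'e^{-2\delta\tau}$.

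It then remains to upgrade the $L^2$ decay of $d^\pi w$ to the pointwise $C^0$ estimate. Applying the a priori $W^{1,2}$ and $C^{k,\alpha}$ elliptic estimates for contact instantons on unit-width sub-strips --- uniform by the $\tau$-translation invariance of the end --- to $\mathcal E([\tau-1,\tau+1])\le Ce^{-2\delta\tau}$ yields $\|d^\pi w\|_{C^0(\{\tau\}\times[0,1])}\le Ce^{-\delta\tau}$. The remaining $R_\lambda$-direction components of $dw$ are controlled by the structure equations: $\delbar^\pi w=0$ gives $d\lambda(\frac{\del w}{\del\tau},\frac{\del w}{\del t})=d\lambda(\zeta,J\zeta)=O(|d^\pi w|^2)$, and together with $d(w^*\lambda\circ j)=0$ and the charge-vanishing normalization this forces the scalar functions $\lambda(\frac{\del w}{\del\tau})$ and $\lambda(\frac{\del w}{\del t})-T$ to solve elliptic equations on the half-strip with right-hand side $O(|d^\pi w|^2)$ and vanishing (resp.\ periodic) boundary data, hence to decay like $e^{-\delta\tau}$ as well. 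Thus $\bigl|\tfrac{\del w}{\del\tau}(\tau,t)\bigr|\le Ce^{-\delta\tau}$, and integrating in $\tau$ from $\tau$ to $\infty$ while using $w(\tau_k,\cdot)\to\gamma$ gives $d\bigl(w(\tau,t),\gamma(t)\bigr)\le\int_\tau^\infty\bigl|\tfrac{\del w}{\del\tau'}\bigr|\,d\tau'\le Ce^{-\delta\tau}$ uniformly in $t$, which is the assertion; the closed-string case is identical with $S^1$ in place of $[0,1]$ and with all boundary terms absent.

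The step I expect to be the main obstacle is the differential inequality with the \emph{sharp} exponent, i.e.\ $h''\ge(2\delta)^2 h$ for arbitrary $\delta<|\mu_1|$ rather than merely for some positive rate. This requires pairing the Fundamental Equation against $\zeta$ so that the quadratic form of $A^\pi$ --- in its explicit tensorial form from Proposition~\ref{prop:A-intro}, so that one knows precisely \emph{which} operator controls the rate --- appears, and then improving the naive coercivity estimate (which only sees the full spectral gap about $0$) to the bound involving $|\mu_1|$ by invoking the finite-energy hypothesis to rule out the modes of $A^\pi$ with eigenvalue of the wrong sign; and it requires controlling the error terms coming from the fact that \eqref{eq:fundamental-isothermal} approaches its autonomous limit only at an a priori unknown speed --- which is why a preliminary weak exponential decay estimate must be obtained first and fed back in. By comparison the handling of the Legendrian boundary terms and of the scalar $\lambda$-direction components is routine, but it must be carried out consistently at the same rate.
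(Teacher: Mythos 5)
Your outline is essentially sound, but note first that the present paper does not prove this proposition: it is quoted from \cite{oh-wang:CR-map2} (closed string) and \cite{oh-yso:index} (open string), so the comparison has to be with the proofs there rather than with anything in this text. Your route --- subsequence convergence and charge vanishing to pin down the limit chord/orbit, $L^2$-exponential decay of $\zeta=\pi\frac{\del w}{\del\tau}$ on slices via the convexity inequality $h''\geq (2\delta)^2h$ driven by the coercivity of $A^\pi_{(\lambda,J,\nabla)}$, elliptic bootstrap on unit sub-strips, the quadratic elliptic equations for $\lambda(\frac{\del w}{\del\tau})$ and $\lambda(\frac{\del w}{\del t})-T$, and finally integration in $\tau$ --- is the classical Hofer--Wysocki--Zehnder/Robbin--Salamon scheme and it does prove the statement. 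The cited sources instead run a \emph{three-interval method} on the discrete quantities $x_k=\|\zeta\|^2_{L^2([k,k+1]\times[0,1])}$, proving $x_k\leq\alpha(x_{k-1}+x_{k+1})$ with $\alpha<\frac12$ from the same spectral gap; this avoids differentiating the non-autonomous coefficients in $\tau$ and is what makes the argument robust enough for the Morse--Bott setting of \cite{oh-wang:CR-map2}, whereas your differential inequality is the more direct option in the nondegenerate case relevant here. The ingredient you invoke somewhat loosely --- closeness of $w(\tau,\cdot)$ to the (by nondegeneracy unique) limit orbit for \emph{all} large $\tau$, needed to absorb the error terms --- is in fact available from the uniform derivative convergence recorded in Corollary \ref{cor:tangent-convergence}, so this is a presentational rather than a substantive gap.

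One caution about the step you single out as the main obstacle: obtaining the rate for \emph{every} $\delta<|\mu_1|$ by discarding the modes of the ``wrong sign'' is considerably harder than ``invoking the finite-energy hypothesis'' --- it is essentially the level of precision of the HWZ/Siefring asymptotic formula and requires spectral projections of the limit operator together with a careful treatment of the non-autonomous error. But it is also unnecessary for the proposition as stated, which only asserts the existence of constants $C,\delta,\tau_0$ with $0<\delta<|\mu_1|$: any $\delta$ below the two-sided spectral gap $\mathrm{dist}(0,\Spec A^\pi_{(\lambda,J,\nabla)})$, which the crude convexity (or three-interval) argument already delivers, automatically satisfies $\delta<|\mu_1|$. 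So you should either drop that refinement or present it as a separate, genuinely stronger statement.
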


Once the above $C^0$-exponential decay is established the proof of
$C^\infty$-exponential convergence $w(\tau,\cdot) \to \gamma$
by establishing the $C^\infty$-exponential decay of
$$
dw - R_\lambda(w)\, dt.
$$
The proof of the latter decay in \cite{oh-yso:index} is carried out
by an \emph{alternating boot strap argument} by decomposing
$$
dw = d^\pi w + w^*\lambda\, R_\lambda
$$
as follows. Let $z = x+i y$ be any isothermal coordinates on $(D_2,\del D_2) \subset (\dot \Sigma,\del \dot \Sigma)$
adapted to the boundary, i.e., satisfying that $\frac{\del}{\del x}$ is tangent to $\del \dot \Sigma$.
We set
\beastar
\zeta & : = & d^\pi w(\del_x), \\
 \chi &: = & \lambda\left(\frac{\del w}{\del y}\right)
+ \sqrt{-1} \lambda\left(\frac{\del w}{\del x}\right)
\eeastar
Then we show that the fundamental equation \eqref{eq:Laplacian-w} is transformed into the following system of equations for the pair
$(\zeta,\alpha)$
\be\label{eq:equation-for-zeta0}
\begin{cases}\nabla_x^\pi \zeta + J \nabla_y^\pi \zeta
+ \frac{1}{2} \lambda(\frac{\del w}{\del y})(\CL_{R_\lambda}J)\zeta - \frac{1}{2} \lambda(\frac{\del w}{\del x})(\CL_{R_\lambda}J)J\zeta =0\\
\zeta(z) \in TR_i \quad \text{for } \, z \in \del D_2
\end{cases}
\ee
and
\be\label{eq:equation-for-alpha}
\begin{cases}
\delbar \chi = \frac{1}{2}|\zeta|^2 \\
\chi(z) \in \R \quad \text{for } \, z \in \del D_2
\end{cases}
\ee
for some $i = 0, \ldots, k$. With this coupled system of equations for $(\zeta,\chi)$
at our disposal, the proof follows the alternating boot-strap 
between $\zeta$ and $\chi$ similarly as in the proof of
of higher regularity results carried out
by the alternating boot strap argument  in \cite{oh:contacton-Legendrian-bdy,oh-yso:index}.

Combining this and elliptic $C^{k,\alpha}$-estimates
given in \cite{oh-wang:CR-map1,oh-yso:index}, the proof of $C^\infty$-convergence of
$w(\tau,\cdot) \to \gamma$ as $\tau \to \infty$ is completed.

So far we have recollected various foundational analytic results
on contact instantons both in the closed and in the open string
context, which will be used later in our study of asymptotic operators.
 Since the same arguments can be applied to the open string
case with by now straightforward incorporation of the boundary
condition exercised in \cite{oh-yso:index}, \cite{oh:contacton-gluing},
\cite{oh:contacton-transversality} and \cite{oh-yso:spectral},
\emph{we will focus on the case of closed strings to streamline
our study of asymptotic operators and to highlight main points
our approach in the rest of the paper.}

\subsection{Exponential convergence of pseudoholomorphic
curves on symplectization}

Finally we make a brief mention on how the exponential
convergence for pseudoholomorphic curves on symplectization
follows from that of contact instantons now.
We consider the symplectization
$$
M = Q \times \R, \quad \omega = d(e^s \pi^*\lambda) = e^s (ds \wedge \pi^*\lambda + d\pi^*\lambda)
$$
of the contact manifold $(Q,\xi)$ equipped with contact form $\lambda$.

On $Q$, the Reeb vector field $R_\lambda$ associated to the contact
form $\lambda$ is the unique vector field $X = :R_\lambda$ satisfying
\be\label{eq:Liouville} X \rfloor \lambda = 1, \quad X \rfloor
d\lambda = 0.
\ee
We call $(y,s)$ the cylindrical coordinates.
On the cylinder $[0, \infty) \times Q \subset (-\infty,\infty) \times Q$,
we have the natural splitting \eqref{eq:TM-splitting} of the tangent bundle $TM$.

Now we describe a special family of almost complex structure compatible to
the given cylindrical structure of $M$.

\begin{defn}\label{defn:lambdcompatibleJ} An almost complex structure $J$ on $ Q \times (0,\infty)$ is called
\emph{$\lambda$-compatible} if it is split into
$$
J = J_\xi \oplus J_0: TM \cong \xi \oplus \R^2  \to TM \cong \xi \oplus \R^2
$$
where $J|_\xi$ is compatible to $d\lambda|_{\xi}$ and $j: \R^2 \to \R^2$
maps $\frac{\del}{\del s}$ to $R_\lambda$.
\end{defn}

For our purpose, we will need to consider a family of symplectic forms
to which the given $J$ is compatible and their associated metrics.
For any $\lambda$-compatible $J$, the $J$-compatible metric associated
to $\omega$ is expressed as
\be\label{eq:gJ}
g_{(\omega,J)}  = ds^2 + g_Q
\ee
on $\R \times Q$.

Now we regard the triple $(\omega,J,g_{(\omega,J)})$ be an almost
Hermitian manifold near the level surface $s = 1$. We then fix the canonical
connection $\nabla$ associated to $(\omega,J,g_{(\omega,J)})$.
(See Appendix \ref{sec:connection}.)

The following
is a general property of the canonical connection.

\begin{prop}\label{prop:TJYY} Let $(W,\omega, J)$ be an almost Hermitian manifold
and $\nabla$ be the canonical connection. Denote by $T$ be its torsion tensor. Then
\be\label{eq:TJYY}
T(JY,Y) = 0
\ee
for all vector fields $Y$ on $W$.
\end{prop}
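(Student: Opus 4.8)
The statement to prove is that for the canonical connection $\nabla$ of an almost Hermitian manifold $(W,\omega,J)$, the torsion satisfies $T(JY,Y)=0$ for all vector fields $Y$. The defining property of the canonical connection is precisely that it is the unique Hermitian connection (i.e.\ $\nabla g = 0$, $\nabla J = 0$) whose torsion $T$ has vanishing $(1,1)$-part, or equivalently whose torsion satisfies $T(X, JX) = 0$ for all $X$. Since this identity is essentially the definition quoted in the text (the canonical connection $\widetilde\nabla$ was introduced as ``the unique Riemannian connection whose torsion $T$ satisfies $T(X,\widetilde J X) = 0$ for all $X$''), the content of the proposition is the polarization passage from $T(X,JX)=0$ for all $X$ to $T(JY,Y)=0$ for all $Y$, and a check of internal consistency with $\nabla J = 0$.

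\smallskip

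The plan is as follows. First I would recall the defining relation $T(X,JX) = 0$ for every vector field $X$. Substituting $X = Y + Z$ and using bilinearity and antisymmetry of $T$ gives, after expansion,
\[
T(Y, JZ) + T(Z, JY) = 0 \qquad \text{for all } Y, Z.
\]
Next I would specialize $Z = JY$. Using $J^2 = -\operatorname{id}$ (for a genuine almost complex structure on $W$; in the contact triad application one works with the split structure where $J^2 = -\Pi$ and applies the identity on $\xi$ separately from the Reeb–radial plane where $J_0^2 = -\operatorname{id}$ as well), the relation becomes
\[
T(Y, J(JY)) + T(JY, JY) = 0,
\]
i.e.\ $T(Y, -Y) + 0 = 0$, which is automatic and gives nothing. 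Instead I would specialize $Y \mapsto JY$ in the polarized identity, or equivalently put $Z = JY$ directly into $T(X,JX)=0$ with $X = JY$: then $T(JY, J(JY)) = T(JY, -Y) = -T(JY,Y) = 0$, hence $T(JY,Y) = 0$. That is the whole argument: it is a one-line consequence of applying the defining identity to the vector field $JY$ in place of $X$ and using $J^2 = -\operatorname{id}$ together with the antisymmetry of the torsion.

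\smallskip

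There is essentially no obstacle here; the only points requiring a word of care are (i) making sure the ambient space on which the statement is applied genuinely carries $J^2 = -\operatorname{id}$ — in the symplectization setting of \eqref{eq:almost-Hermitian-triple} this holds on all of $T(Q\times\R)$ since $\widetilde J = J \oplus J_0$ with $J_0^2 = -\operatorname{id}$ on the Reeb–radial plane and $J^2 = -\operatorname{id}$ on $\xi$, so $\widetilde J{}^2 = -\operatorname{id}$ on the whole tangent bundle — and (ii) noting that the canonical connection exists and is unique for an almost Hermitian manifold, so the phrase ``the canonical connection'' is well posed (references \cite{gauduchon,kobayashi,oh:book1} are cited for this). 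I would therefore present the proof as the short polarization computation above, remarking that \eqref{eq:TJYY} is just a restatement of the torsion normalization built into the definition of $\nabla$.
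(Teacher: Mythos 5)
Your argument is correct and is essentially the paper's (implicit) one: the paper states the canonical connection's defining normalization $T(X,\widetilde J X)=0$ (recorded again as \eqref{eq:canonical-nabla} in Appendix A) and treats \eqref{eq:TJYY} as an immediate consequence, exactly as you do by substituting $X=JY$ and using $J^2=-\operatorname{id}$. Note you could shorten it further: antisymmetry of the torsion alone gives $T(JY,Y)=-T(Y,JY)=0$ directly from the defining identity with $X=Y$, with no need for $J^2=-\operatorname{id}$ or the polarization detour.
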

Consider the decomposition \eqref{eq:TM-splitting}
and the canonical connection $\widetilde \nabla$ on $Q \times \R$, which in particular
is $J$-linear.
Recalling the expression $u = (w,f)$ with
$f = s\circ u$ and $w = \pi_Q \circ u$ for each map $u: \dot \Sigma \to Q \times \R$, 
we know that if it is $J$-holomorphic, it satisfies
$$
\delbar^\pi w = 0, \quad w^*\circ j = df
$$
on $\dot \Sigma$. In particular $w$ is a contact instanton.
Then we have already shown the exponential convergence $w$ to
the Reeb orbit $w(\cdot, t)$.

For the convergence of $f$, we use the equation
$$
w^*\lambda \circ j = df
$$
which is equivalent to
$$
w^*\lambda = - df\circ j
$$
By taking differential, we obtain
$$
-d(df\circ j) = w^*d\lambda = \frac12 |d^\pi w|^2 \, d\tau \wedge dt.
$$
Firstly, this equation shows that $f$ is a subharmonic function. More explicitly, we derive
satisfies
\be\label{eq:a-asymptotics}
\frac{\del^2 f}{\del \tau^2} + \frac{\del^2 f}{\del t^2} - \frac12 |d^\pi w|^2 = 0
\ee
where we know  from Theorem \ref{thm:subsequence} that the convergence $\frac12 |d^\pi w|^2 \to T^2$ is exponentially fast.
This immediately
gives rise to the following exponential convergence of the radial component
which will complete the study of asymptotic convergence property of
finite energy pseudoholomorphic planes in symplectization.

\begin{prop}[Exponential convergence of radial component] Let
$u=(w,f)$ be a finite energy $\widetilde J$-holomorphic plane in
$Q\times \R$. Then we have convergence
$$
df \to T\, d\tau
$$
exponentially fast. More precisely, there exists some $c \in \R$ such that
$$
|f(\tau,t) - (T\tau + c)| \to 0
$$
as $\tau \to \infty$ exponentially fast.
\end{prop}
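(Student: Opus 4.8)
The plan is to integrate the linear ODE \eqref{eq:a-asymptotics} for $f(\tau,t)$ on the semi-strip $[0,\infty)\times[0,1]$ using the exponential decay of the inhomogeneity $\tfrac12|d^\pi w|^2 - T^2$ together with a Fourier decomposition in the $t$-variable, and then to identify the unique bounded (in the appropriate weighted sense) solution mode. First I would set $g(\tau,t) := f(\tau,t) - T\tau$, so that $g$ satisfies $\Delta g = \tfrac12|d^\pi w|^2 - T^2 =: \rho(\tau,t)$, where $\rho \to 0$ exponentially fast in $C^\infty$ by Theorem \ref{thm:subsequence} and the $C^\infty$-exponential convergence $w(\tau,\cdot)\to\gamma$ already established for contact instantons. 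I would then recall that for the plane ($\dot\Sigma$ conformally $\C$, one positive puncture), the relevant $t$-variable is the $S^1$-coordinate; expanding $g(\tau,t) = g_0(\tau) + \sum_{k\neq 0} g_k(\tau)e^{2\pi i k t}$ and $\rho$ likewise, the zero mode solves $g_0''(\tau) = \rho_0(\tau)$ and each nonzero mode solves $g_k'' - (2\pi k)^2 g_k = \rho_k$.

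Next I would solve these ODEs. For the nonzero modes, the homogeneous solutions are $e^{\pm 2\pi |k|\tau}$; discarding the exponentially growing branch by the a priori finiteness of energy (which forces $dg$, hence $g_k$ and $g_k'$, to be bounded — indeed the finite-energy hypothesis already gave subsequential convergence of $w$, and one checks $|dw - R_\lambda(w)\,dt|\to 0$, so $df - T\,d\tau \to 0$ at least subsequentially), the variation-of-constants formula against the decaying Green's kernel produces $g_k(\tau)$ with $|g_k(\tau)| \le C_k e^{-\delta\tau}$ for the same $\delta$ controlling $\rho$ (up to shrinking $\delta$ below $2\pi$). Summing over $k$ with the Sobolev/elliptic control on $\rho$ gives an exponential bound on $\sum_{k\neq 0} g_k(\tau)e^{2\pi i kt}$. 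For the zero mode, $g_0(\tau) = g_0(0) + g_0'(0)\tau + \int_0^\tau\!\!\int_0^{\sigma}\rho_0(s)\,ds\,d\sigma$; since $\int_0^\infty |\rho_0| < \infty$ the double integral converges to a constant plus an exponentially small remainder, so boundedness of $dg$ forces $g_0'(0) + \int_0^\infty \rho_0 = 0$ — i.e.\ the only surviving nonconstant contribution is killed — leaving $g_0(\tau) \to c$ for some constant $c \in \R$ exponentially fast. Combining the two gives $f(\tau,t) - (T\tau + c) \to 0$ exponentially, and differentiating the exponential estimates (or re-running the argument on $\nabla dg$ via elliptic bootstrapping as in the contact-instanton case) yields $df \to T\,d\tau$ exponentially in $C^\infty$.

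The main obstacle I anticipate is \emph{not} the ODE integration itself but justifying a priori that $dg = df - T\,d\tau$ is genuinely bounded on the end (so that one is entitled to discard growing modes and to fix the zero-mode slope); this is where the finite-energy hypothesis on $u=(w,f)$ must be fed in carefully, rather than merely the finite $\pi$-energy of $w$. Concretely, I would derive the boundedness of $df$ from $w^*\lambda = -df\circ j$ together with the already-proven $C^0$-exponential convergence $w(\tau,\cdot)\to\gamma$ (which controls $w^*\lambda \to T\,dt$), so that $df \circ j \to -T\,dt$, hence $df \to T\,d\tau$, at least in $C^0$ and exponentially — this simultaneously supplies the boundedness input and, in fact, already gives a quick direct route to the conclusion, with the Fourier/ODE analysis serving to upgrade to $C^\infty$ and to pin down the additive constant $c = \lim_{\tau\to\infty}(f(\tau,t) - T\tau)$ (independent of $t$ by the exponential decay of the nonzero modes). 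A secondary technical point is uniformity of the rate $\delta$: one must take $\delta$ smaller than both $|\mu_1|$ (controlling $\rho$ via the asymptotic operator of $\gamma$) and $2\pi$ (the spectral gap of $-\partial_t^2$ on $S^1$), which is harmless.
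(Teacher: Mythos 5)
Your proposal is correct, but the argument you foreground (Fourier decomposition of \eqref{eq:a-asymptotics} plus mode-by-mode ODE analysis) is not the paper's proof; the paper's proof is precisely what you call the ``quick direct route'' in your final paragraph. Since $u=(w,f)$ is $\widetilde J$-holomorphic, $df = w^*\lambda\circ j$ and the asymptotic charge vanishes, so Corollary \ref{cor:Q=0} gives $w^*\lambda \to T\,dt$ exponentially fast; composing with $j$ (note $dt\circ j = d\tau$) yields $df \to T\,d\tau$ exponentially, and integrating the exponentially decaying $\del_\tau f - T$ together with $\del_t f \to 0$ produces the constant $c$. That is the whole proof. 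Your Fourier/ODE machinery is therefore redundant for this statement, and as you set it up it is in fact circular as a primary argument: the boundedness/convergence input needed to discard the growing branches and to fix the zero-mode slope is exactly what the direct route already delivers, and once you have $df \to T\,d\tau$ exponentially in $C^0$ the mode analysis adds nothing (the constant $c$ and its $t$-independence come for free from the decay of $\del_\tau f - T$ and of $\del_t f$). What the heavier route would buy is an argument not presupposing the exponential rate for $w^*\lambda$, but that rate is already part of the package recalled from Corollary \ref{cor:Q=0}, so the paper does not need it.

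One correction to your displayed equation: with $g = f - T\tau$ one has $\Delta g = \Delta f = \frac12|d^\pi w|^2$, with no $-T^2$, and the correct asymptotics is $\frac12|d^\pi w|^2 \to 0$ exponentially (both $\pi$-components of $dw$ vanish in the limit, cf. Corollary \ref{cor:tangent-convergence}), not $\to T^2$; the latter echoes a misprint in the text preceding \eqref{eq:a-asymptotics}. Taking $\rho := \frac12|d^\pi w|^2$ your zero-mode and nonzero-mode computations go through unchanged, so the slip is harmless, but as literally written your $\rho$ tends to $-T^2 \neq 0$ and the zero mode would grow quadratically, so the source term must be corrected for the argument to make sense.
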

\begin{proof} We have already established $w^*\lambda \to T\, dt$ before
in Corollary \ref{cor:Q=0}.
By composing by $j$, the statement follows.
\end{proof}

\section{Covariant linearization operator and its Fredholm theory}
\label{sec:Fredholm}

In this section, we work out the Fredholm theories of
pseudoholomorphic curves on symplectization by recalling the
exposition given in \cite{oh:contacton} for the case of contact instantons and
the one given in \cite{oh-savelyev} for the case of $\mathfrak{lcs}$-instantons.
The zero-temperature limit of the latter also provides the relevant Fredholm theory
for pseudoholomorphic curves just by incorporating the presence of the $\R$-factor
in the product $M = Q^{2n-1} \times \R$ into that of contact instantons.
Explanation of this point is now in order.

Let $\Sigma$ be a closed Riemann surface and $\dot \Sigma$ be its
associated punctured Riemann surface.  We allow the set of whose punctures
to be empty, i.e., $\dot \Sigma = \Sigma$.

We recall the splitting $TM = \xi \oplus \CV$ from \eqref{eq:TM-splitting}.
Using this splitting, we would like to regard the assignment $u \mapsto \delbar_J u$ which can be
decomposed into
$$
u = (w,f) \mapsto \left(\delbar^\pi w, w^*\lambda \circ j - f^*ds\right) =: \Upsilon(u)
$$
for a map $w: \dot \Sigma \to Q$ as a section of the (infinite dimensional) vector bundle
over the space of maps of $w$. In this section, we lay out the precise relevant off-shell framework
of functional analysis. We recall the definition of the $(0,1)$-projection of
the covariant differential $d^{\nabla^\pi}$
$$
\delbar^{\nabla^\pi} := \frac12\left(\nabla^\pi + J \nabla^\pi \circ j\right).
$$
(We recall readers the definition of general covariant differential in Appendix \ref{sec:covariant-differential},
which in particular applies to $d^{\nabla^\pi}$.)

We decompose
$$
\Upsilon(u) = (\Upsilon_1(u), \Upsilon_2(u))
$$
into the $\xi$ and the Reeb components respectively. Then we have the formulae
\be\label{eq:Upsilon12}
\Upsilon_1(u) = \delbar^\pi u, \quad \Upsilon_2(u) = w^*\lambda \circ j - f^*ds.
\ee

\begin{thm}[Theorem 10.1 \cite{oh-savelyev}] \label{thm:linearization} We decompose $d\pi = d^\pi w + w^*\lambda\otimes R_\lambda$
and $Y = Y^\pi + \lambda(Y) R_\lambda$, and $X = (Y, v) \in \Omega^0(w^*T(Q \times \R))$.
Denote $\kappa = \lambda(Y)$ and $b = ds(v)$. Then we have
\bea
D\Upsilon_1(u)(Y,v) & = & \delbar^{\nabla^\pi}Y^\pi + B^{(0,1)}(Y^\pi) +  T^{\pi,(0,1)}_{dw}(Y^\pi) \nonumber\\
&{}& \quad + \frac{1}{2}\kappa \cdot  \left(\CL_{R_\lambda}J)J(\del^\pi w\right)
\label{eq:Dwdelbarpi}\\
D\Upsilon_2(u)(Y,v) & = &  w^*(\CL_Y \lambda) \circ j- \CL_v ds = d\kappa \circ j - db
+ w^*(Y \rfloor d\lambda) \circ j
\nonumber\\
\label{eq:Dwddot}
\eea
where $B^{(0,1)}$ and $T_{dw}^{\pi,(0,1)}$ are the $(0,1)$-components of $B$ and
$T_{dw}^\pi$, where $B, \, T_{dw}^\pi: \Omega^0(w^*TQ) \to \Omega^1(w^*\xi)$ are
zero-order differential operators given by
$$
B(Y) =
- \frac{1}{2}  w^*\lambda \left((\CL_{R_\lambda}J)J Y\right)
$$
and
$$
T_{dw}^\pi(Y) = \pi T(Y,dw)
$$
respectively.
\end{thm}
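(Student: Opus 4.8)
\emph{Proof strategy.} The plan is to fix $X=(Y,v)\in\Omega^0(w^*T(Q\times\R))$, choose a smooth one-parameter family $u_r=(w_r,f_r)$ with $u_0=u$ and $\partial u_r/\partial r|_{r=0}=X$, and compute the ($r$-)derivative of $\Upsilon(u_r)$ at $r=0$, taken covariantly where the target bundle varies, using the contact triad connection $\nabla$ on $(Q,\lambda,J)$ --- equivalently the canonical connection $\widetilde\nabla$ on $(Q\times\R,\widetilde g_\lambda,\widetilde J)$ --- together with the induced Hermitian connection $\nabla^\pi$ on $\xi$. Since $\Upsilon_1$ depends only on $w$ while $\Upsilon_2$ also records the $\R$-factor, I would treat the two components separately.

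For $\Upsilon_2$ the computation is elementary: $\Upsilon_2(u_r)$ is a one-form on the \emph{fixed} surface $\dot\Sigma$, so no connection is needed. Since $j$ is fixed, the $r$-derivative of $w_r^*\lambda\circ j-f_r^*ds$ at $r=0$ is $(w^*\CL_Y\lambda)\circ j-\CL_v\,ds$ by naturality of the Lie derivative under pullback (with $Y,v$ extended arbitrarily off the image). Cartan's formula gives $\CL_Y\lambda=d(\lambda(Y))+Y\rfloor d\lambda=d\kappa+Y\rfloor d\lambda$, and $\CL_v\,ds=d(ds(v))=db$ since $ds$ is closed; this is \eqref{eq:Dwddot}, and $R_\lambda\rfloor d\lambda=0$ shows the last term sees only $Y^\pi$.

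For $\Upsilon_1$ I would work in an isothermal coordinate $z=x+iy$ on $\dot\Sigma$, express $\delbar^\pi w_r$ through $\pi\,\partial_x w_r$ and $\pi\,\partial_y w_r$, and differentiate covariantly in $r$ with $\nabla^\pi$. Three mechanisms enter. (i) The torsion commutation identity $\nabla_{\partial_r}(\partial_i w)-\nabla_{\partial_i}(\partial_r w)=T(\partial_r w,\partial_i w)$ converts $r$-derivatives of $dw$ into $x,y$-derivatives of $\partial_r w=Y$; projecting to $\xi$ assembles the $\nabla^\pi$-derivatives of $Y^\pi$ into $\delbar^{\nabla^\pi}Y^\pi$ and leaves the torsion term $T^{\pi,(0,1)}_{dw}(Y^\pi)$. (ii) Differentiating the projection $\pi=\operatorname{id}-\lambda\otimes R_\lambda$ brings in $\nabla\lambda$ and $\nabla R_\lambda$; since $\nabla R_\lambda$ is, by the defining torsion property of the triad connection \cite{oh-wang:connection}, a zeroth-order tensor built from $\CL_{R_\lambda}J$, this produces exactly $B^{(0,1)}(Y^\pi)$ with $B(Y)=-\frac12\,w^*\lambda((\CL_{R_\lambda}J)JY)$. (iii) The $\kappa R_\lambda$-part of $Y$ contributes $\kappa\,\nabla_{R_\lambda}$ acting on $\pi\,dw$, and the discrepancy between $\nabla_{R_\lambda}$ and the Hermitian $\nabla^\pi_{R_\lambda}$ on $\xi$-sections --- which one rewrites through $\CL_{R_\lambda}J$ using the torsion identity $T(JZ,Z)=0$ of Proposition \ref{prop:TJYY} --- yields the remaining term $\frac12\kappa\,(\CL_{R_\lambda}J)J(\partial^\pi w)$. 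Collecting the $(0,1)$-parts gives \eqref{eq:Dwdelbarpi}.

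I expect the main obstacle to be steps (ii) and (iii): organizing the contributions from the failure of $\pi$ and of $J$ to be $\nabla$-parallel --- only their restrictions to $\xi$ behave well --- into the clean zeroth-order tensors $B$ and $\frac12\kappa(\CL_{R_\lambda}J)J(\partial^\pi w)$, with no connection-dependent residue surviving in the final $J$-dependent formula. This is in essence the linearization computation already performed for contact instantons in \cite{oh-wang:CR-map1,oh:contacton}; the one genuinely new feature for $\Upsilon$ on the symplectization is carrying the $\R$-factor $(f,v,b)$, which decouples from $\Upsilon_1$ and enters $\Upsilon_2$ only through the closed form $ds$, contributing bookkeeping but no new analytic content.
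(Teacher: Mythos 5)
Your strategy is essentially the derivation behind the quoted theorem (which this paper cites from \cite{oh-savelyev} without reproving): covariant linearization via the triad/canonical connection, the pullback Cartan formula for $\Upsilon_2$, and for $\Upsilon_1$ the torsion commutation identity together with $\nabla_v R_\lambda = \frac12(\CL_{R_\lambda}J)Jv$ from \eqref{eq:nablaYRlambda}, which is exactly how the terms $\delbar^{\nabla^\pi}Y^\pi$, $T^{\pi,(0,1)}_{dw}(Y^\pi)$, $B^{(0,1)}(Y^\pi)$ and $\frac12\kappa(\CL_{R_\lambda}J)J\del^\pi w$ arise. One small correction to your mechanism (iii): the $\kappa$-term does not come from a discrepancy between $\nabla_{R_\lambda}$ and $\nabla^\pi_{R_\lambda}$ nor from Proposition \ref{prop:TJYY}; it comes from $\nabla_{dw(e)}(\kappa R_\lambda)=d\kappa(e)\,R_\lambda+\kappa\nabla_{dw(e)}R_\lambda$, i.e.\ from the same identity \eqref{eq:nablaYRlambda} that produces $B$ (the $d\kappa\, R_\lambda$ piece being killed by the $\xi$-projection and instead feeding $\Upsilon_2$), after which the anticommutation of $\CL_{R_\lambda}J$ with $J$ turns the $d^\pi w$ contribution into the $(1,0)$-part $\del^\pi w$. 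This is a cosmetic imprecision in an otherwise correct sketch, not a gap.
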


Now we consider a punctured Riemann surface $\dot \Sigma$.
We consider some choice of weighted Sobolev spaces
$$
\CW^{k,p}_{\delta;\eta}\left(\dot \Sigma,Q \times \R;\vec \gamma^+, \vec \gamma^-\right)
$$
as the off-shell function space and linearize the map
$$
(w,\widetilde f) \mapsto \left(\delbar^\pi w,  d\widetilde f\right).
$$
This linearization operator then becomes cylindrical in cylindrical
coordinates near the punctures.
The Fredholm property of the linearization map
$$
D\Upsilon_{(\lambda,J)}(u): \Omega^0_{k,p;\delta}(u^*T(Q \times \R);J;\gamma^+,\gamma^-) \to
\Omega^{(0,1)}_{k-1,p;\delta}(w^*\xi) \oplus \Omega^{(0,1)}_{k-1,p}(u^*\CV)
$$
and its index is computed in \cite{oh:contacton}, \cite{oh:contacton-transversality}
and in \cite{oh-yso:index} respectively. 

We briefly recall the aforementioned Fredholm property here.
We have the decomposition
\be\label{eq:Omega0-decompose}
\Omega^0_{k,p;\delta}(w^*T(Q \times \R);J;\gamma^+,\gamma^-) =
\Omega^0_{k,p;\delta}(w^*\xi) \oplus \Omega^0_{k,p;\delta}(u^*\CV)
\ee
and again the operator
\be\label{eq:DUpsilonu}
D\Upsilon_{(\lambda,J)}(u): \Omega^0_{k,p;\delta}(w^*T(Q \times \R);J;\gamma^+,\gamma^-) \to
\Omega^{(0,1)}_{k-1,p;\delta}(w^*\xi) \oplus \Omega^{(0,1)}_{k-1,p;\delta}(u^*\CV)
\ee
which is decomposed into
$$
D\Upsilon_1(u)(Y,v)\oplus D\Upsilon_2(u)(Y,v)
$$
where the summands are given as in
\eqref{eq:Dwdelbarpi} and \eqref{eq:Dwddot} respectively. 

In terms of the decomposition \eqref{eq:Omega0-decompose},
the linearized operator $D\Upsilon_{(\lambda, J)}(u)$ can be written as
in the following succinct matrix form
\be\label{eq:matrix-form}
\left(
\begin{matrix}
 \delbar^{\nabla^\pi} + B^{(0,1)} +  T^{\pi,(0,1)}_{dw} &, & \frac{1}{2}(\cdot) \cdot
  \left((\CL_{R_\lambda}J)J(\del^\pi w)\right)\\
\left((\cdot)^\pi \rfloor d\lambda\right)\circ j &, & \delbar
\end{matrix}
\right).
\ee
For the calculation of the index of the linearized operator, we would like to
homotope to the block-diagonal form, i.e., into the direct sum operator
$$
\left(\delbar^{\nabla^\pi} + T^{\pi,(0,1)}_{dw}  + B^{(0,1)}\right)\oplus \delbar
= D_w \delbar^\pi \oplus \delbar
$$
via a \emph{continuous} path of Fredholm operators. The
Fredholm property of all elements in the path used in \cite{oh:contacton},
\cite{oh-savelyev} relies on the
following asymptotic property of the off-diagonal terms.
This was implicitly used in the calculation of the index in
\cite{oh:contacton}, \cite{oh-savelyev}, and its proof is given in \cite[Proposition 13.6]{oh-kim:survey}.
For readers' convenience, we reproduce its proof here to show how the asymptotic exponential
convergence and the explicit formula of the linearized operator
enter in the asymptotic property.

\begin{prop}\label{prop:off-diagonal}
The off-diagonal terms decay exponentially fast as $|\tau|\to \infty$. In particular
the off-diagonal term is a compact operator relatively to the diagonal term.
\end{prop}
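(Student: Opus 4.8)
The plan is to read off the off-diagonal entries from the matrix form \eqref{eq:matrix-form} and show that each one decays exponentially as $|\tau| \to \infty$, using the exponential convergence results recalled in Section \ref{subsec:exponential-convergence}. There are two off-diagonal terms. The upper-right entry is the zero-order operator $v \mapsto \frac{1}{2}\, ds(v)\cdot (\CL_{R_\lambda}J)J(\del^\pi w)$, whose operator norm at a point $(\tau,t)$ is controlled by $|(\CL_{R_\lambda}J)J|\cdot |\del^\pi w|(\tau,t)$. The lower-left entry is $Y \mapsto (Y^\pi \rfloor d\lambda)\circ j$, which is not a priori small; here one must use that it acts only on the \emph{Reeb-and-radial} part $\CV$ of the domain after the homotopy, or more precisely that in the block decomposition we only care about the component of $(Y^\pi \rfloor d\lambda)\circ j$ that couples $\Omega^0(u^*\CV)$ to $\Omega^{(0,1)}(w^*\xi)$; since $\CV$ is spanned by $R_\lambda$ and $\partial/\partial s$ and $R_\lambda \rfloor d\lambda = 0$, the contribution of the $R_\lambda$-component vanishes identically, and the $\partial/\partial s$-component contributes nothing to a $TQ$-valued expression, so this entry is in fact identically zero on $\CV$. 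Thus the only genuinely analytic point is the exponential decay of the upper-right term.

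First I would invoke the $C^0$-exponential convergence stated just before (Proposition 11.23 of \cite{oh-wang:CR-map2}, Proposition 6.5 of \cite{oh-yso:index}), together with the subsequence/charge-vanishing results of Appendix \ref{sec:charge-vanishing}, to conclude that along each cylindrical end $w(\tau,\cdot) \to \gamma$ in $C^0$ and, more to the point, that $d^\pi w \to 0$ exponentially fast: indeed the asymptotic Reeb orbit $\gamma$ has $d^\pi\gamma = \pi\dot\gamma = 0$ since $\dot\gamma = T R_\lambda$, so the full derivative $dw$ converges to $T\,dt$ (equivalently $w^*\lambda \to T\,dt$, as in Corollary \ref{cor:Q=0}) and its $\pi$-component tends to zero. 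Then I would upgrade this to $C^0$-exponential decay of $d^\pi w$ itself using the $C^\infty$-exponential convergence asserted at the end of Section \ref{subsec:exponential-convergence} (via the alternating bootstrap for the pair $(\zeta,\chi)$ in \eqref{eq:equation-for-zeta0}–\eqref{eq:equation-for-alpha}): since $\zeta = d^\pi w(\partial_x)$ solves a cylindrical first-order equation with asymptotic operator $A^\pi_{(\lambda,J,\nabla)}$, its $W^{k,p}$-norm on unit cylinders $[\tau,\tau+1]\times S^1$ is bounded by $C e^{-\delta|\tau|}$ for any $0 < \delta < |\mu_1|$.

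Having this, I would estimate the operator norm of the upper-right block on a unit cylinder $[\tau,\tau+1]\times S^1$: since the tensor $(\CL_{R_\lambda}J)J$ is a fixed smooth bundle map along the compact $Q$ (hence uniformly bounded along the precompact image of $w$), the $W^{k-1,p}$-to-$L^p$ operator norm is dominated by $\|d^\pi w\|_{W^{k-1,p}([\tau,\tau+1]\times S^1)} \le C e^{-\delta|\tau|}$, which is the claimed exponential decay. The compactness of the off-diagonal term relative to the diagonal term then follows by a standard argument: approximate the off-diagonal operator by its truncations to $\{|\tau| \le R\}$, which are compact (bounded zero-order operators with compactly supported, hence pre-compact via Rellich, image after composing with a compact inclusion of Sobolev spaces), and observe that the truncation error has operator norm $\le C e^{-\delta R} \to 0$; a norm-limit of compact operators is compact. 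The main obstacle is the first part — making sure the exponential decay of $d^\pi w$ is genuinely available in the $W^{k-1,p}$ (not merely $C^0$) norm needed to bound the operator as a map between the weighted Sobolev spaces in \eqref{eq:DUpsilonu}; this is exactly what the alternating bootstrap of \cite{oh-yso:index} supplies, so I would cite that step rather than reprove it, and then the compactness conclusion is routine.
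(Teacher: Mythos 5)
Your treatment of the upper-right entry and of the final compactness step is fine and is essentially the paper's route for that half: the entry $\frac12(\cdot)\cdot\left((\CL_{R_\lambda}J)J(\del^\pi w)\right)$ decays because $\del^\pi w\left(\frac{\del}{\del\tau}\right) = -J\left(\frac{\del w}{\del t}\right)^\pi \to 0$ exponentially (the paper extracts this directly from $\delbar^\pi w=0$ and $\frac{\del w}{\del t}\to TR_\lambda$; your appeal to the exponential decay of $d^\pi w$ amounts to the same thing), and the truncation argument for relative compactness is routine.

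However, your treatment of the lower-left entry contains a genuine error coming from a misreading of the block structure of \eqref{eq:matrix-form}. In the decomposition the $(2,1)$-entry $\left((\cdot)^\pi\rfloor d\lambda\right)\circ j$ takes as input the \emph{$\xi$-component} $Y^\pi\in\Omega^0(w^*\xi)$ and produces output in $\Omega^{(0,1)}(u^*\CV)$; it is the term $w^*(Y\rfloor d\lambda)\circ j$ appearing in $D\Upsilon_2(u)(Y,v)$ in Theorem \ref{thm:linearization}, which depends only on $Y^\pi$ precisely because $R_\lambda\rfloor d\lambda=0$. It does \emph{not} act on sections of $\CV$, so the argument that "the $R_\lambda$-component vanishes and $\del/\del s$ contributes nothing, hence the entry is identically zero on $\CV$" addresses the wrong operator: since $d\lambda|_\xi$ is nondegenerate, $(Y^\pi\rfloor d\lambda)\circ j$ is generically nonzero, and the proposition's content is exactly that this entry decays at the ends, not that it vanishes. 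The correct argument (the paper's) is to evaluate against the coordinate vector fields, giving
\begin{equation*}
(Y^\pi\rfloor d\lambda)\circ j \;=\; d\lambda\left(Y,\frac{\del w}{\del t}\right)d\tau \;-\; d\lambda\left(Y,\frac{\del w}{\del\tau}\right)dt,
\end{equation*}
and then to use the exponential convergences $\frac{\del w}{\del t}\to TR_\lambda$ and $\frac{\del w}{\del\tau}\to 0$ (equivalently $\left(\frac{\del w}{\del\tau}\right)^\pi=-J\left(\frac{\del w}{\del t}\right)^\pi$ together with $w^*\lambda\to T\,dt$, $Q=0$): both limiting coefficients vanish because $R_\lambda\rfloor d\lambda=0$, so the entry decays exponentially. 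Without this step your proof establishes decay of only one of the two off-diagonal blocks, which is a real gap rather than a stylistic difference.
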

\begin{proof} For the $(1,2)$-term of the matrix \eqref{eq:matrix-form}, we derive
$$
\left(\frac{\del w}{\del \tau}\right)^\pi + J \left(\frac{\del w}{\del t}\right)^\pi = 0
$$
from the equation $\delbar^\pi w = 0$ by evaluating it against $\frac{\del}{\del \tau}$.
 
 Therefore we have
$$
\del^\pi w\left(\frac{\del}{\del \tau}\right) = \frac12\left(\left(\frac{\del w}{\del \tau}\right)^\pi - J \left(\frac{\del w}{\del t}\right)^\pi \right)
= - J \left(\frac{\del w}{\del t}\right)^\pi.
$$
By the exponential convergence $\frac{\del w}{\del t} \to T R_\lambda(\gamma_\infty(t))$, we derive
$$
J \del^\pi w\left(\frac{\del}{\del \tau}\right)
= \left(\frac{\del w}{\del t}\right)^\pi \to 0
$$
since $\frac{\del w}{\del t} \to T R_\lambda$. 

For the $(2,1)$-term, we evaluate
\beastar
(Y^\pi \rfloor d\lambda)\circ j\left(\frac{\del}{\del \tau}\right) & = & d\lambda\left(Y, \frac{\del w}{\del t}\right)\\
(Y^\pi \rfloor d\lambda)\circ j\left(\frac{\del}{\del t}\right) & = & - d\lambda\left(Y, \frac{\del w}{\del \tau}\right).
\eeastar
Therefore we have derived
$$
(Y^\pi \rfloor d\lambda)\circ j = d\lambda\left(Y, \frac{\del w}{\del t}\right) \, d\tau
- d\lambda\left(Y, \frac{\del w}{\del \tau}\right)\, dt.
$$
This proves that
$$
\left((\cdot)^\pi \rfloor d\lambda\right)\circ j \to  d\lambda\left(\cdot, \frac{\del w}{\del t}\right) \, d\tau - d\lambda\left(\cdot , \frac{\del w}{\del \tau}\right)\, dt
$$
as $|\tau| \to \infty$. Since
$\frac{\del w}{\del t} \to T R_\lambda$, the first term converges to zero, 
and the second term converges to
$$
- d\lambda (\cdot, J TR_\lambda) = T d\lambda \left(\cdot, \frac{\del}{\del s}\right) = 0,
$$
all exponentially fast. 

Combining the two, we have proved that 
the off-diagonal term converges to the zero operator exponentially fast. The last statement
about the relatively compactness is an immediate consequence of this exponential decay.

This finishes the proof.
\end{proof}

Because of this asymptotic vanishing, the path
\be\label{eq:continuous-path}
s \in [0,1] \mapsto
\left(
\begin{matrix}
 \delbar^{\nabla^\pi} + B^{(0,1)} +  T^{\pi,(0,1)}_{dw} &, & \frac{1-s}{2}(\cdot) \cdot
  \left((\CL_{R_\lambda}J)J(\del^\pi w)\right)\\
(1-s)\left((\cdot)^\pi \rfloor d\lambda\right)\circ j &, & \delbar
\end{matrix}
\right) =: L_s
\ee
carries the same asymptotic operator and hence
is a continuous path of Fredholm operators
$$
L_s:  \Omega^0_{k,p;\delta}(w^*T(Q \times \R);J;\gamma^+,\gamma^-) \to
\Omega^{(0,1)}_{k-1,p;\delta}(w^*\xi) \oplus \Omega^{(0,1)}_{k-1,p;\delta}(u^*\CV)
$$
such that $L_0 = D\Upsilon_{(\lambda,J)}(u)$ and
$$
L_1 =   \left(\delbar^{\nabla^\pi} + B^{(0,1)} +  T^{\pi,(0,1)}_{dw}\right) \oplus \delbar = D_w \delbar^\pi \oplus \delbar. 
$$
Therefore we have only to
compute the index of the diagonal operator $L_1$ which was
given in \cite{oh:contacton}, \cite{oh-savelyev}.

\section{Definition of asymptotic operator and its covariant formulae}
\label{sec:asymptotic-operators}

Recalling that for any $\widetilde J$-holomorphic curve $(w,f)$ on the symplectization,
$w$ is a contact instanton for $J$ on $Q$. Furthermore we have
$$
(w,f)^*T(Q \times \R) = w^*TQ \oplus f^*T\R
= w^*\xi \oplus \span_\R \left\{\frac{\del}{\del s}, R_\lambda \right\}.
$$
The last splitting is respected
by the \emph{canonical connection} of the almost Hermitian manifold
$$
(Q\times \R, d\lambda + ds \wedge \lambda, \widetilde J).
$$
Indeed, we have
$$
\nabla^{\text{\rm can}} = \nabla^\pi \oplus \nabla_0
$$
where $\nabla^\pi = \pi \nabla|_{\xi}$ and $\nabla_0$ is the trivial
connection on $\span_\R\{\frac{\del}{\del s}, R_\lambda\}$.

\begin{rem} Here again we would like to emphasize the usage of
the canonical connection of the almost Hermitian manifold, not
the Levi-Civita connection, admits this splitting.
\end{rem}

Now we study a finer analysis of the asymptotic behavior along the Reeb orbit. Our discussion thereof is close to the one given in
\cite[Section 11.2 \& 11.5]{oh-wang:CR-map2} where the more general
Morse-Bott case is studied.

For this purpose, we  evaluate the linearization operator $D\Upsilon$ against $\frac{\del}{\del \tau}$. We have already checked the
off-diagonal terms of the matrix representation of $D\Upsilon(w)$
decays exponentially fast in the direction $\tau$
in the previous section and so we have only to examine the diagonal terms $D\Upsilon_1(w)$ and $D\Upsilon_2(w)$.

First we consider $D\Upsilon_2$ and rewrite
$$
D\Upsilon_2(u) =\delbar = \frac12(\del_\tau + i \del_t).
$$
Therefore we have the asymptotic operator
\be\label{eq:Aperp}
A^\perp{(\lambda,J,\nabla)} : = - i \del_t
\ee
which does not depend on the choice of $J \in \CJ_\lambda(Q,\xi)$.
The eigenfunction expansions for this operator is nothing but
the standard Fourier series for $f \in L^2(S^1,Q)$.

This being said, we now focus on the $Q$-component $D\Upsilon_1(u)$ of the
asymptotic operator, and compute
\be\label{eq:Dupsilon1-ddtau}
D\delbar^\pi (w)\left(\frac{\del}{\del \tau}\right) = \frac12(\nabla_\tau^\pi +  J \nabla_t^\pi)
+ T_{dw}^{\pi,(0,1)}\left(\frac{\del}{\del \tau}\right)
+ B^{(0,1)}\left(\frac{\del}{\del \tau}\right).
\ee
In fact, this is nothing but the left hand side of \eqref{eq:fundamental-isothermal}
by the calculation of the torsion term, which we omit since we have already have
the formula \eqref{eq:fundamental-isothermal}.
We write
$$
D\delbar^\pi (w)\left(\frac{\del}{\del \tau}\right) =  \frac12\left(\nabla_\tau^\pi - A^\tau_{(\lambda,J,\nabla)}\right).
$$
and define the family of operators
$$
A^\tau = A^{\tau}_{(\lambda,J,\nabla)}: \Gamma(w_\tau^*\xi) \to \Gamma(w_\tau^*\xi)
$$
given by the formula
\be\label{eq:Atau}
A^\tau: = - J \nabla_t^\pi - \left(
 T_{dw}^{\pi,(0,1)}\left(\frac{\del}{\del \tau}\right)
+ B^{(0,1)}\left(\frac{\del}{\del \tau}\right)\right).
\ee
Thanks to the exponential convergence of $w_\tau= w(\tau, \cdot) \to \gamma_\pm$ as $\tau \to \pm \infty$,
we can take the limit of the conjugate operators
\be\label{eq:Atau-conjugate}
\Pi_\tau^\infty A^\tau_{(\lambda,J,\nabla)}(\Pi_\tau^\infty)^{-1}: \Gamma(\gamma_\pm^*\xi) \to \Gamma(\gamma_\pm^*\xi)
\ee
as $\tau \to \pm \infty$ respectively, where $\Pi_\tau^\infty$ is the parallel transport
along the short geodesics from $w(\tau,t)$ to $w(\infty,t)$. This conjugate is defined
for all sufficiently large $|\tau|$.

Since the discussion at $\tau = -\infty$ will be the same, we will focus our discussion
on the case at $\tau = + \infty$ from now on.

\begin{defn}[Asymptotic operator]\label{defn:asymptotic-operator}
 Let $(\tau,t)$ be
the cylindrical (or strip-like) coordinate, and let $\nabla^\pi$ be
the almost Hermitian connection on $w^*\xi$ induced by
the contact triad connection $\nabla$ of $(Q,\lambda,J)$.
We define the \emph{asymptotic operator} of a contact
instanton $w$ to be the limit operator
\be\label{eq:asymptotic-operator}
A^\pi_{(\lambda,J,\nabla)}: = \lim_{\tau \to +\infty}\Pi_\tau^\infty A^\tau_{(\lambda,J,\nabla)}(\Pi_\tau^\infty)^{-1}.
\ee
\end{defn}
Obviously we can define the asymptotic operator at negative punctures in the similar way.

\subsection{Asymptotic operator in contact triad connection}

Now we find the formula for the above
limit operator \emph{with respect to the contact triad connection}.
Since $T(R_\lambda, \cdot) = 0$, $\left(\frac{\del w}{\del \tau}\right)^\pi
= - J \left(\frac{\del w}{\del t}\right)^\pi$ and
$\frac{\del w}{\del t}(\tau, \cdot) \mapsto T R_\lambda$ exponentially fact, we obtain
$$
T_{dw}^{\pi,(0,1)}(\frac{\del}{\del \tau}) = T^\pi\left(\frac{\del w}{\del \tau},\cdot\right) \to  0.
$$
On the other hand, we have
$$
2 B^{(0,1)}\left(\frac{\del}{\del \tau}\right) = -\frac12 \lambda\left(\frac{\del w}{\del \tau}\right) \CL_{R_\lambda}J
-\frac12 \lambda\left(\frac{\del w}{\del t}\right) J \CL_{R_\lambda}J \\
$$
This converges to $\frac{T}{2} J \nabla_{R_\lambda}$ since $w^*\lambda \to T \, dt$
as $\tau \to \infty$.

This immediately gives rise to the following simple explicit formula
 for the asymptotic operator.

\begin{prop}\label{prop:A} Let $\nabla$ be the contact triad connection
associated to any compatible pair $(\lambda, J)$. Then the asymptotic operator $A^\pi_{(\lambda,J,\nabla)}$ is given by
$$
A^\pi_{(\lambda,J,\nabla)} = - J \nabla_t + \frac{T}2 \CL_{R_\lambda}J J
$$
In particular, it induces a (real) self-adjoint operator on $(\xi, g|_\xi)$
 with respect to the triad metric $g$.
 \end{prop}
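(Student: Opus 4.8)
The plan is to compute the limit operator \eqref{eq:asymptotic-operator} directly from the expression \eqref{eq:Atau}, using the two convergence facts that the excerpt has already established at this point: first, that along a contact instanton $w$ with asymptotic Reeb orbit $(\gamma,T)$ one has $\frac{\del w}{\del t}(\tau,\cdot) \to T R_\lambda$ and $w^*\lambda \to T\,dt$ exponentially fast; and second, that $\delbar^\pi w = 0$ forces $\left(\frac{\del w}{\del \tau}\right)^\pi = -J\left(\frac{\del w}{\del t}\right)^\pi$, so the $\pi$-component of $\frac{\del w}{\del \tau}$ also decays exponentially. I would treat the three summands of $A^\tau$ in turn. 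The term $-J\nabla_t^\pi$ passes to the limit and becomes $-J\nabla_t$ (the almost Hermitian connection $\nabla^\pi$ on $w^*\xi$ being the $\pi$-part of the triad connection $\nabla$, and the parallel transport $\Pi_\tau^\infty$ conjugating it to the pulled-back connection along $\gamma$). The torsion term $T_{dw}^{\pi,(0,1)}\!\left(\frac{\del}{\del\tau}\right) = T^\pi\!\left(\frac{\del w}{\del\tau},\cdot\right)$ splits as $T^\pi\!\left(\left(\frac{\del w}{\del\tau}\right)^\pi,\cdot\right) + \lambda\!\left(\frac{\del w}{\del\tau}\right) T^\pi(R_\lambda,\cdot)$; the first piece vanishes in the limit because $\left(\frac{\del w}{\del\tau}\right)^\pi \to 0$, and the second vanishes because the triad connection satisfies $T(R_\lambda,\cdot)=0$ (a defining property recalled in the excerpt). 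Hence the torsion term contributes nothing.

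It remains to handle $B^{(0,1)}\!\left(\frac{\del}{\del\tau}\right)$. Starting from $B(Y) = -\tfrac12 w^*\lambda\big((\CL_{R_\lambda}J)JY\big)$ and extracting the $(0,1)$-part evaluated on $\frac{\del}{\del\tau}$ (using again $\delbar^\pi w = 0$ to rewrite the $\del_t$-evaluation in terms of $\del_\tau$), one gets
$$
2B^{(0,1)}\!\left(\frac{\del}{\del\tau}\right) = -\frac12\,\lambda\!\left(\frac{\del w}{\del\tau}\right)\CL_{R_\lambda}J \;-\;\frac12\,\lambda\!\left(\frac{\del w}{\del t}\right)J\,\CL_{R_\lambda}J.
$$
As $\tau\to\infty$, $\lambda\!\left(\frac{\del w}{\del\tau}\right)\to 0$ (the charge-type component, which vanishes by Corollary \ref{cor:Q=0}) while $\lambda\!\left(\frac{\del w}{\del t}\right)\to T$, so the limit is $-\tfrac{T}{2}\,J\,\CL_{R_\lambda}J$; dividing by the leftover factor $2$ as dictated by the placement of $A^\tau$ in $D\delbar^\pi(w)\left(\frac{\del}{\del\tau}\right) = \tfrac12(\nabla_\tau^\pi - A^\tau)$, this contributes $+\tfrac{T}{2}(\CL_{R_\lambda}J)J$ with the sign coming out of the minus in front of $B^{(0,1)}$ in \eqref{eq:Atau}. (Here the appearance of $\nabla_{R_\lambda}$ in the displayed intermediate line of the surrounding text, which on $\xi$ along a Reeb orbit agrees with the Lie-derivative expression $\CL_{R_\lambda}J\cdot J$ up to the standard triad-connection identity $\nabla_{R_\lambda}J = \tfrac12\CL_{R_\lambda}J$, should be normalized carefully; I would cite the relevant identity for the triad connection rather than reprove it.) Assembling the three pieces yields
$$
A^\pi_{(\lambda,J,\nabla)} = -J\nabla_t + \frac{T}{2}\,(\CL_{R_\lambda}J)\,J,
$$
which is the claimed formula.

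For the self-adjointness assertion, I would argue as follows. The leading term $-J\nabla_t$ is the standard asymptotic operator of a Hermitian bundle pair over $S^1$ with a unitary connection, and it is symmetric with respect to the $L^2$ inner product induced by $g|_\xi$: integration by parts over $S^1$ produces no boundary term, $J$ is a $g|_\xi$-isometry with $J^*=-J$, and $\nabla_t$ is metric, so $\langle -J\nabla_t\,\zeta,\eta\rangle = \langle \zeta,-J\nabla_t\,\eta\rangle$. For the zeroth-order term one must check that $(\CL_{R_\lambda}J)J$ is a $g|_\xi$-symmetric endomorphism of $\xi$; this is where compatibility of $(\lambda,J)$ enters, and it is the only nontrivial algebraic point. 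Concretely, differentiating $J^2 = -\mathrm{id}$ along $R_\lambda$ gives $(\CL_{R_\lambda}J)J + J(\CL_{R_\lambda}J) = 0$, i.e. $\CL_{R_\lambda}J$ anticommutes with $J$; and differentiating the symmetry $d\lambda(X,JY) = d\lambda(Y,JX)$ (equivalently $g|_\xi(JX,JY)=g|_\xi(X,Y)$ together with $\CL_{R_\lambda}\lambda=0$, $\CL_{R_\lambda}d\lambda=0$) along $R_\lambda$ yields that $g|_\xi\big((\CL_{R_\lambda}J)X, Y\big)$ is symmetric in $X,Y$ after composing suitably with $J$; combining these two identities shows $(\CL_{R_\lambda}J)J$ is $g|_\xi$-symmetric. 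I expect this last algebraic verification—bookkeeping the anticommutation relation for $\CL_{R_\lambda}J$ against the $d\lambda$-compatibility of $J$ to land exactly on symmetry of $(\CL_{R_\lambda}J)J$—to be the main (though still elementary) obstacle; everything else is either cited from earlier in the paper or a routine limit.
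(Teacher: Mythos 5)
Your proposal is correct and follows essentially the same route as the paper: the formula is obtained by taking the $\tau\to\infty$ limit of the three summands of $A^\tau$ in \eqref{eq:Atau}, killing the torsion term via $T(R_\lambda,\cdot)=0$ together with $(\frac{\del w}{\del\tau})^\pi=-J(\frac{\del w}{\del t})^\pi$ and the exponential convergence, and evaluating the $B^{(0,1)}$ term using $w^*\lambda\to T\,dt$. The only difference is cosmetic: where you verify directly that $(\CL_{R_\lambda}J)J$ is $g|_\xi$-symmetric by differentiating $J^2=-\mathrm{id}$ and the compatibility along $R_\lambda$, the paper simply quotes this pointwise symmetry from Lemma 6.2 of Blair's book and combines it with the same integration-by-parts argument for $-J\nabla_t$.
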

\begin{proof} We have already shown the formula above. The symmetry of the opeartor
$-J \nabla_t$ can be directly checked from the $J$-linearity of $\nabla^\pi$ by
the integration by parts.

Now we recall from  \cite{blair:book}
that  that the operator $\CL_{R_\lambda}J J$ defines a symmetric operator
on $\Gamma(\gamma^*\xi)$ by the following general identity and hence so is $A^\pi_{(\lambda,J,\nabla)}$.

\begin{lem}[Lemma 6.2 \cite{blair:book}]
The linear maps $\CL_{R_\lambda}J J$ and $\CL_{R_\lambda}J$ are
(pointwise) symmetric with respect to the triad metric.
\end{lem}

This finishes the proof of the proposition.
\end{proof}

Proposition \ref{prop:A} enables us to derive the following.

\begin{prop}\label{prop:nablaRlambdaY} We have
\be\label{eq:nablaRlambdaY}
\nabla_{R_\lambda}Y = [R_\lambda,Y] +  \frac12 (\CL_{R_\lambda}J) JY
= \CL_{R_\lambda} Y + \frac12 (\CL_{R_\lambda}J) JY
\ee
for all $Y \in \xi$.
\end{prop}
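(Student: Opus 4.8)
The plan is to identify the expression $-J\nabla_t + \frac{T}{2}(\CL_{R_\lambda}J)J$ for the asymptotic operator from Proposition \ref{prop:A} with an alternative expression that one can write down directly from the definition of the contact Cauchy--Riemann operator, namely in terms of the Lie derivative along $R_\lambda$. Concretely, along an isospeed Reeb orbit $(\gamma,T)$ with $\dot\gamma = TR_\lambda$, one has $\nabla_t Y = T\nabla_{R_\lambda}Y$, so Proposition \ref{prop:A} reads $A^\pi_{(\lambda,J,\nabla)} = -TJ\nabla_{R_\lambda} + \frac{T}{2}(\CL_{R_\lambda}J)J$. On the other hand, the asymptotic operator is also obtained as the linearization of the Reeb flow acting on $\xi$, and there is a standard expression for that linearization in terms of $\CL_{R_\lambda}$: the linearized return map is generated by $Y\mapsto \CL_{R_\lambda}Y$ on sections of $\gamma^*\xi$, composed with $-J$ to make it self-adjoint (this is exactly the $-J\CL_{R_\lambda}$-type description referred to in Theorem \ref{thm:A=T-intro} via \eqref{eq:A-in-CL2-intro}). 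Comparing the two expressions for $A^\pi$ and cancelling the common factor $T$ (using $T\neq 0$), one is forced to the identity $\nabla_{R_\lambda}Y = \CL_{R_\lambda}Y + \frac12(\CL_{R_\lambda}J)JY$, which is precisely \eqref{eq:nablaRlambdaY} once we observe $\CL_{R_\lambda}Y = [R_\lambda,Y]$.

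More carefully, here are the steps I would carry out. First, recall from \cite{oh-wang:connection} the defining properties of the contact triad connection $\nabla$: it is adapted to the triad $(Q,\lambda,J)$, it restricts to the contact Hermitian connection $\nabla^\pi$ on $\xi$, its torsion satisfies $T(R_\lambda,\cdot) = 0$, and $\nabla R_\lambda = 0$. Second, use the standard formula relating the covariant derivative and the Lie derivative of a vector field: for any torsion tensor $T$ of a connection $\nabla$, $\CL_{X}Y = \nabla_X Y - \nabla_Y X - T(X,Y)$. Taking $X = R_\lambda$ and using both $\nabla_Y R_\lambda = 0$ (since $\nabla R_\lambda = 0$) and $T(R_\lambda,Y) = 0$, this gives $\CL_{R_\lambda}Y = \nabla_{R_\lambda}Y$ — but this is the identity \emph{without} the correction term, and it holds for the full tangent bundle, not for the $\xi$-component. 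So the content of the proposition is really about the \emph{$\xi$-projected} derivative $\nabla^\pi_{R_\lambda}Y = \pi\nabla_{R_\lambda}Y$ versus $\pi[R_\lambda,Y]$; the discrepancy $\frac12(\CL_{R_\lambda}J)JY$ measures how $\nabla^\pi$ differs from the naive Lie-derivative projection, and this is exactly what the $B^{(0,1)}$-term in \eqref{eq:Atau} was computing. Thus the third step is to unwind the computation in the proof of Proposition \ref{prop:A}: the term $2B^{(0,1)}(\partial/\partial\tau)$ converged to $\frac{T}{2}J\CL_{R_\lambda}J$, and matching this against the definition $A^\tau = -J\nabla^\pi_t - (T^{\pi,(0,1)}_{dw}(\cdot) + B^{(0,1)}(\cdot))$ together with the known limit $A^\pi = -TJ\nabla^\pi_{R_\lambda} + \tfrac{T}{2}(\CL_{R_\lambda}J)J$ pins down $\nabla^\pi_{R_\lambda}$.

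The main obstacle, and the step requiring the most care, is bookkeeping: one must be scrupulous about (a) whether $\nabla$ means the full triad connection or its $\xi$-part $\nabla^\pi$, since these differ precisely by a $\lambda$-valued term that vanishes when tested against $R_\lambda$-tangent directions but not in general; (b) the ordering and placement of $J$ in $(\CL_{R_\lambda}J)JY$ versus $J(\CL_{R_\lambda}J)Y$, since $\CL_{R_\lambda}J$ and $J$ anticommute (differentiating $J^2 = -\Pi$ along $R_\lambda$ gives $(\CL_{R_\lambda}J)J + J(\CL_{R_\lambda}J) = 0$ on $\xi$), which is why the two sign conventions in Proposition \ref{prop:A} and here look superficially different; and (c) the factor of $T$: the identity \eqref{eq:nablaRlambdaY} is stated connection-theoretically with no $T$, so one divides out the $T$ coming from $\dot\gamma = TR_\lambda$ and then notes that the resulting pointwise tensorial identity, being independent of $\gamma$, holds for all $Y\in\xi$ at every point of $Q$. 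Once these conventions are fixed consistently, the proof is a direct comparison of the two formulas for $A^\pi_{(\lambda,J,\nabla)}$ with no further analysis needed. Alternatively — and this may be cleaner — one can prove \eqref{eq:nablaRlambdaY} directly from the axioms of the contact triad connection in \cite{oh-wang:connection} by polarizing the torsion condition $T(Z,JZ)=0$ and using $\nabla R_\lambda = 0$ and $\CL_{R_\lambda}\lambda = 0$, which avoids invoking the asymptotic operator at all; I would present whichever is shorter, but I expect the direct connection-theoretic derivation to be the more robust route.
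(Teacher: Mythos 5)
There is a genuine gap, on two counts. First, your primary route is circular relative to the paper's logic: you propose to compare the formula of Proposition \ref{prop:A} with a ``standard'' expression of the asymptotic operator in terms of $\CL_{R_\lambda}$, citing \eqref{eq:A-in-CL2-intro}; but Theorem \ref{thm:A=T} is itself deduced from Proposition \ref{prop:nablaRlambdaY} (its proof begins by ``utilizing \eqref{eq:nablaRlambdaY}''), and you offer no independent derivation of that Lie-derivative expression. The heuristic ``linearized return map composed with $-J$'' would only account for the $-TJ\CL_{R_\lambda}$ term, not the two extra terms $-\frac{T}{2}\CL_{R_\lambda}J$ and $-\frac{T}{2}J(\CL_{R_\lambda}J)$ appearing in \eqref{eq:A-in-CL2}, so the comparison you want to make has nothing to match against until the proposition is already known.

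Second, and more seriously, you assert that the contact triad connection satisfies $\nabla R_\lambda = 0$. It does not: the axioms give only $\nabla_{R_\lambda}R_\lambda = 0$ and $\nabla_Y R_\lambda \in \xi$ for $Y\in\xi$, and Corollary \ref{cor:connection} gives $\nabla_Y R_\lambda = \frac12(\CL_{R_\lambda}J)JY$, which vanishes only when $R_\lambda$ is Killing (cf.\ the remark quoting \cite[Remark 2.4]{oh-wang:CR-map1}). Your torsion computation with $\nabla R_\lambda = 0$ yields $\CL_{R_\lambda}Y = \nabla_{R_\lambda}Y$, which contradicts the identity being proved; the discrepancy is not a $\nabla$ versus $\nabla^\pi$ bookkeeping issue as you suggest, but is exactly the term $\nabla_Y R_\lambda$ that you dropped. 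The paper's proof is two lines along the ``direct connection-theoretic'' route you mention at the end, but with the correct inputs: from the definition of torsion, $\nabla_Y R_\lambda - \nabla_{R_\lambda}Y - [Y,R_\lambda] = T(Y,R_\lambda)$, the triad property $T(R_\lambda,\cdot)=0$, and \eqref{eq:nablaYRlambda}, one gets $\nabla_{R_\lambda}Y = [R_\lambda,Y] + \frac12(\CL_{R_\lambda}J)JY$ immediately, the second equality in \eqref{eq:nablaRlambdaY} being the definition of $\CL_{R_\lambda}$ on vector fields. Your closing alternative never invokes \eqref{eq:nablaYRlambda}, which is the one ingredient that produces the correction term, so as written neither of your routes closes.
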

\begin{proof} We recall the formula
$$
\nabla_Y R_\lambda = \frac12 (\CL_{R_\lambda} J)JY
$$
from \eqref{eq:nablaYRlambda}. By the definition of the torison,
we also have
$$
\nabla_Y R_\lambda = \nabla_{R_\lambda} Y + [Y,R_\lambda] + T(Y,R_\lambda).
$$
By combining these with the torsion property $T(\cdot,R_\lambda) = 0$ of the triad connection,
we obtain the first equality. The second is just the definition of the Lie derivative $\CL_{R_\lambda}$
acted upon vector fields.
\end{proof}

We note that the right hand side formula in \eqref{eq:nablaRlambdaY}
is canonically defined depending only on $\lambda$ and $J$
\emph{independent of the choice of connection.}
In other words, we can derive the first variation of
the operator $\nabla_{R_\lambda}|_\xi$ associated to the triad
connection of $(Q,\lambda,J)$ with respect to the compatible pair
$(\lambda,J)$ or with respect to $J$ when $\lambda$ is fixed.

Now in the context of pseudoholomorphic curves on symplectization,
we proceed the procedure by decomposing the full asymptotic operator
$A^\pi_{(\lambda,J,\nabla)}$ of the
pseudoholomorphic curves to be the operator
$$
A^\pi_{(\lambda,J,\nabla)}: \gamma^*\xi \oplus \C \to  \gamma^*\xi \oplus \C
$$
into
$$
A_{(\lambda,J,\nabla)} = A^\pi_{(\lambda,J,\nabla)} \oplus
A^\perp_{(\lambda,J,\nabla)}.
$$
Here $\C$ stands for the pull-back bundle
$$
(\pi \circ u_\tau)^*\left(\R \left\{\frac{\del}{\del s}, R_\lambda \right\}\right)
 = w_\tau^*\left(\R \left\{\frac{\del}{\del s}, R_\lambda \right\}\right)
\cong \R \left \{\frac{\del}{\del s}, R_\lambda \right\}
$$
which is canonically trivialized, and hence may be regarded as a vector bundle over
the curves $w_\tau$ on $Q$. (Compare this with \cite[Definition 2.28]{pardon:contacthomology}.)

\subsection{Asymptotic operator in Levi-Civita connection}

Up until now, we have emphasized the usage of triad connection which
give rise to an optimal form of tensorial expression.
We recall that while $\nabla_Y J = 0$ for all $Y \in \xi$
for the contact triad connection $\nabla$ by
one of the defining axioms of the contact triad connection.
We have $\nabla_{R_\lambda}J \neq 0$ for the connection:
(In fact, $\nabla_{R_\lambda} J = 0$ if and only if
$R_\lambda$ is a Killing vector field, i.e.,
$\nabla R_\lambda = 0$ with respect to $\nabla$. See \cite[Remark 2.4]{oh-wang:CR-map1}.)
On the other hand while $\nabla^{\text{\rm LC}}_Y J \neq 0$
for the Levi-Civita connection $\nabla^{\text{\rm LC}}$ in general, the Levi-Civita
connection  carries  the following surprising property
$$
\nabla^{\text{\rm LC}}_{R_\lambda} J = 0.
$$
(See Lemma. 6.1 \cite{blair:book},
Proposition 4 \cite{oh-wang:connection}.)  Although it will not
be used in the present paper, we also convert the formula of the
asymptotic operator $A_{(\lambda,J,\nabla)}$ into one written
in terms of the Levi-Civita connection for a possible future purpose.

For this purpose, the following lemma is crucial.
\begin{lem}
For any $Y \in \Gamma(\xi)$, we have
\be\label{eq:LCR-triadR}
\nabla_{R_\lambda} Y = \nabla_{R_\lambda}^{\text{\rm LC}} Y
- \frac12 JY.
\ee
\end{lem}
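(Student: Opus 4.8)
The plan is to compare $\nabla$ (the contact triad connection) with $\nabla^{\text{\rm LC}}$ through their difference tensor. Recall from \cite{oh-wang:connection} that $\nabla$ is compatible with the triad metric $g = g_\lambda$ and that its torsion $T$ satisfies $T(R_\lambda,\cdot) = 0$ as well as $T(Y,Z) = d\lambda(Y,Z)\,R_\lambda$ for $Y, Z \in \Gamma(\xi)$. Since $\nabla^{\text{\rm LC}}$ is also $g$-metric and torsion-free, the difference tensor $D(X,Y) := \nabla_X Y - \nabla^{\text{\rm LC}}_X Y$ satisfies the two algebraic identities $g(D(X,Y),Z) + g(D(X,Z),Y) = 0$ and $D(X,Y) - D(Y,X) = T(X,Y)$. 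The standard Koszul-type manipulation then recovers $D$ from $T$:
\[
g(D(X,Y),Z) = \frac{1}{2}\Big(g(T(X,Y),Z) - g(T(Y,Z),X) + g(T(Z,X),Y)\Big).
\]
So it suffices to evaluate the right-hand side at $X = R_\lambda$, $Y \in \Gamma(\xi)$, and $Z$ arbitrary.

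First I would take $Z \in \Gamma(\xi)$. Then the first and third terms on the right vanish because $T(R_\lambda,Y) = 0$ and $T(Z,R_\lambda) = -T(R_\lambda,Z) = 0$, leaving
\[
g(D(R_\lambda,Y),Z) = -\frac{1}{2}\,g(T(Y,Z),R_\lambda) = -\frac{1}{2}\,d\lambda(Y,Z) = -\frac{1}{2}\,g(JY,Z),
\]
where the last equality is the elementary identity $d\lambda(Y,Z) = g(JY,Z)$ on $\xi$, a direct consequence of $g_\lambda|_\xi = d\lambda(\cdot,J\cdot)$ together with compatibility of $(\lambda,J)$. Next, taking $Z = R_\lambda$, all three torsion terms vanish, so $g(D(R_\lambda,Y),R_\lambda) = 0 = -\frac{1}{2}\,g(JY,R_\lambda)$ since $JY \in \xi$. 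As $g(D(R_\lambda,Y),\cdot)$ and $-\frac{1}{2}\,g(JY,\cdot)$ thus agree on all of $TQ$, we conclude $D(R_\lambda,Y) = -\frac{1}{2}JY$, which is precisely \eqref{eq:LCR-triadR}.

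The argument is short once the structural input is in place, so the main work is really bookkeeping: matching the sign conventions in the Koszul formula against the convention for the torsion of $\nabla$, and verifying $d\lambda|_\xi = g(J\cdot,\cdot)$ rather than its negative. If one prefers to keep things self-contained instead of quoting $T(Y,Z) = d\lambda(Y,Z)R_\lambda$ outright, the one point requiring a small computation is the identity $\lambda(T(Y,Z)) = d\lambda(Y,Z)$ for $Y,Z \in \xi$: it follows from $d\lambda(Y,Z) = (\nabla_Y\lambda)(Z) - (\nabla_Z\lambda)(Y) + \lambda(T(Y,Z))$ together with $(\nabla_Y\lambda)(Z) = g(\nabla_Y R_\lambda,Z) = \frac{1}{2}\,g((\CL_{R_\lambda}J)JY,Z)$, using $\nabla$-metricity and \eqref{eq:nablaYRlambda}, where the pointwise symmetry of $\CL_{R_\lambda}J$ and its anticommutation with $J$ (Lemma 6.2 of \cite{blair:book}) force the antisymmetrization over $(Y,Z)$ to cancel.
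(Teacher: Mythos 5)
Your argument is correct, but it runs along a genuinely different track from the paper's. The paper first uses $T(R_\lambda,\cdot)=0$ together with torsion-freeness of $\nabla^{\text{\rm LC}}$ to trade $\nabla_{R_\lambda}Y$ and $\nabla^{\text{\rm LC}}_{R_\lambda}Y$ for $\nabla_Y R_\lambda+[R_\lambda,Y]$ and $\nabla^{\text{\rm LC}}_Y R_\lambda+[R_\lambda,Y]$, and then simply subtracts the two known formulas $\nabla_Y R_\lambda=\frac12(\CL_{R_\lambda}J)JY$ from \eqref{eq:nablaYRlambda} and $\nabla^{\text{\rm LC}}_Y R_\lambda=\frac12 JY+\frac12(\CL_{R_\lambda}J)JY$ (Blair, Lemma~6.2; \cite{oh-wang:connection}). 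You instead compute the full difference tensor $D=\nabla-\nabla^{\text{\rm LC}}$ via the standard contorsion formula for a metric connection with prescribed torsion, and evaluate it using only metricity, $T(R_\lambda,\cdot)=0$, and the $R_\lambda$-component of the torsion on $\xi$. Your route is slightly longer but buys independence from the Levi-Civita formula for $\nabla^{\text{\rm LC}}_Y R_\lambda$: in effect your computation re-derives that formula from the triad connection's axioms, whereas the paper's two-line proof presupposes it. The sign bookkeeping in your contorsion identity and in $d\lambda(Y,Z)=g_\lambda(JY,Z)$ checks out against the paper's conventions. One point to repair in the write-up: the identity $T(Y,Z)=d\lambda(Y,Z)\,R_\lambda$ for $Y,Z\in\Gamma(\xi)$ is stronger than what the triad connection satisfies --- the $\xi$-component $T^\pi(Y,Z)$ is in general nonzero (it carries the Nijenhuis-type tensor of $J|_\xi$; only $T^\pi(JY,Y)=0$ is an axiom), and what is actually available is $\lambda(T)=d\lambda$, i.e.\ Corollary~\ref{cor:connection}(2). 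Since your computation only ever uses $g_\lambda(T(Y,Z),R_\lambda)=\lambda(T(Y,Z))$, this does not affect the proof; you should simply cite $\lambda(T)=d\lambda$ directly (your closing paragraph essentially reproves that corollary, which is fine but unnecessary) rather than assert the stronger pointwise identity for $T$ on $\xi$.
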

\begin{proof} Applying the torsion property $T(R_\lambda,Y) = 0$ of the triad connection
$\nabla$ and the torsionfreeness of the Levi-Civita connection,
\eqref{eq:LCR-triadR} is equivalent to
$$
\nabla_Y R_\lambda  + [R_\lambda,Y] = \nabla^{\text{\rm LC}}_Y R_\lambda + [R_\lambda,Y]
- \frac12 JY.
$$
By cancelling the bracket terms away, it is enough to show
$$
\nabla_Y R_\lambda = \nabla^{\text{\rm LC}}_Y R_\lambda
- \frac12 JY.
$$
 This follows from  the formula  $\nabla_Y R_\lambda  = \frac12 (\CL_{R_\lambda}J) J$
(see \eqref{eq:nablaYRlambda}) and
$$
\nabla_Y^{\text{\rm LC}} R_\lambda
=  \frac12 JY + \frac12 (\CL_{R_\lambda}J) J
$$
for all $Y \in \xi$. (See Lemma 6.2 \cite{blair:book},  Lemma 9 \cite{oh-wang:connection} .)
\end{proof}
Therefore we have derived the simple relationship
$$
\nabla^{\text{\rm LC}}_{R_\lambda}
=  \frac12 J + \nabla_{R_\lambda}
$$
between $\nabla^{\text{\rm LC}}_{R_\lambda}$ and $\nabla_{R_\lambda}$
on $\Gamma(\xi)$.

Then taking the pull-back of this identity along the
loop $\gamma$ and combining Proposition \ref{prop:A} ,
we can rewrite the asymptotic operator in terms of
the Levi-Civita connection as follows.

\begin{cor}\label{cor:A-in-LC} Let $(\lambda, J)$ be any compatible pair
and let $\nabla^{\text{\rm LC}}$ be the Levi-Civita connection of the
triad metric of $(Q,\lambda,J)$. Let $w$ be a contact instanton with its asymptotic limit 
$\gamma = w(\pm \infty, \cdot)$  in cylindrical coordinate $(\tau,t)$ near a puncture.  Assume
$$
\int \gamma^*\lambda =T \neq 0
$$
and let $A_{(\lambda, J, \nabla)}$ be the asymptotic operator of $w$. Then we have
\begin{enumerate}
\item $[\nabla^{\text{\rm LC}}_t,J] (= \nabla^{\text{\rm LC}}_tJ)= 0$,
\item
\be
 \label{eq:A-in-LC}
A^\pi_{(\lambda,J,\nabla)} = - J\nabla_t^{\text{\rm LC}} - \frac{T}{2} Id +   \frac{T}{2} (\CL_{R_\lambda}J) J.
\ee
\end{enumerate}
\end{cor}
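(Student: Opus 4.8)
The plan is to deduce Corollary \ref{cor:A-in-LC} directly from Proposition \ref{prop:A} together with the relation $\nabla^{\text{\rm LC}}_{R_\lambda} = \frac12 J + \nabla_{R_\lambda}$ on $\Gamma(\xi)$ established just above. First I would prove part (1): since $\nabla^{\text{\rm LC}}_{R_\lambda}J = 0$ (quoted from Lemma 6.1 of \cite{blair:book}) and, along the asymptotic Reeb orbit $\gamma$, the tangent direction $\del_t$ corresponds to $T R_\lambda$ up to the exponentially small error that vanishes in the limit, the pulled-back connection $\nabla^{\text{\rm LC}}_t$ acting on $\gamma^*\xi$ satisfies $\nabla^{\text{\rm LC}}_t = T \nabla^{\text{\rm LC}}_{R_\lambda}$ on the limit bundle; hence $[\nabla^{\text{\rm LC}}_t, J] = T[\nabla^{\text{\rm LC}}_{R_\lambda}, J] = T(\nabla^{\text{\rm LC}}_{R_\lambda}J) = 0$. (Strictly one should note the limit is taken after parallel transport, exactly as in Definition \ref{defn:asymptotic-operator}, so the identity holds for the limit operator acting on $\gamma^*\xi$.)

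For part (2), I would start from the triad-connection formula in Proposition \ref{prop:A},
\[
A^\pi_{(\lambda,J,\nabla)} = -J\nabla_t + \frac{T}{2}(\CL_{R_\lambda}J)J,
\]
and substitute $\nabla_t = \nabla^{\text{\rm LC}}_t - \frac{T}{2} J$, which is the $\del_t$-pullback of the pointwise identity $\nabla_{R_\lambda} = \nabla^{\text{\rm LC}}_{R_\lambda} - \frac12 J$ on $\Gamma(\xi)$ (from the displayed relation preceding the corollary, multiplied by $T$ and evaluated along $\gamma$). Then
\[
-J\nabla_t = -J\bigl(\nabla^{\text{\rm LC}}_t - \tfrac{T}{2}J\bigr) = -J\nabla^{\text{\rm LC}}_t + \tfrac{T}{2}J^2 = -J\nabla^{\text{\rm LC}}_t - \tfrac{T}{2}\,\mathrm{Id},
\]
using $J^2 = -\mathrm{Id}$ on $\xi$. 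Combining with the unchanged term $\frac{T}{2}(\CL_{R_\lambda}J)J$ gives exactly \eqref{eq:A-in-LC}.

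The only genuine subtlety — which I would state carefully but not belabor — is the passage between the pointwise endomorphism identities on $\Gamma(\xi)$ over $Q$ and the operator identities for the limit operator on $\gamma^*\xi$: one must use that $\del_t$ along the asymptotic cylinder maps (under $dw$) to $T R_\lambda$ in the $\tau \to \pm\infty$ limit, that the parallel-transport conjugation $\Pi_\tau^\infty$ used in Definition \ref{defn:asymptotic-operator} intertwines the $\tau$-dependent connections with the connection on $\gamma^*\xi$, and that the zero-order torsion term $T^{\pi,(0,1)}_{dw}(\del/\del\tau)$ already decayed (as shown in the proof of Proposition \ref{prop:A}). Since $\nabla^{\text{\rm LC}}$ and $\nabla$ differ by the same tensorial correction $\frac12 J$ uniformly, all these limits are unproblematic and the substitution is legitimate; this is really the main (and only mildly technical) point. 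Everything else is the elementary algebra displayed above.
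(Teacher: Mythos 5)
Your proof is correct and follows essentially the same route as the paper: the paper also deduces the corollary by pulling back the identity $\nabla^{\text{\rm LC}}_{R_\lambda} = \frac12 J + \nabla_{R_\lambda}$ on $\Gamma(\xi)$ (together with $\nabla^{\text{\rm LC}}_{R_\lambda}J = 0$) along $\gamma$ with $\dot\gamma = T R_\lambda$ and substituting into Proposition \ref{prop:A}. Your extra remarks about limits and parallel transport are harmless but not really needed, since once Proposition \ref{prop:A} is established the remaining step is pure pointwise algebra on $\Gamma(\gamma^*\xi)$.
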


It follows from Corollary \ref{cor:A-in-LC} that the self-adjoint operator
$$
A^\pi_{(\lambda,J,\nabla)}: L^2(\gamma^*\xi) \to L^2(\gamma^*\xi)
$$
can be decomposed into
$A = A' + A''$ where $A'$ and $A''$ are the $J$-linear and
the anti-$J$-linear parts: We have
\be\label{eq:A'A''}
A' = -J \nabla_t^{\text{\rm LC}} - \frac{T}{2} Id,
\quad A'' = \frac{T}{2}\CL_{R_\lambda}J J
\ee
where $A''$ defines a compact operator on
$\text{\rm Dom}(A^\pi_{(\lambda,J,\nabla)})$ on a self-adjoint
extension of  $A^\pi_{(\lambda,J,\nabla)}$
which we regard as a linear map
$W^{1,2}(\gamma^*\xi) \to L^2(\gamma^*\xi)$.

The same kind of property also holds for the open string case
for the Legendrian pair $(R_0,R_1)$. The explicit formula for
the asymptotic operator given above enables us to study a  series of
perturbation results on the eigenfunctions and eigenvalues of
the asymptotic operators \emph{under the perturbation of $J$'s}
inside the set $\CJ_{\lambda}$ of $\lambda$-compatible CR almost
complex structures $J$. Discussion on this perturbation theory is now
in order.

\section{Spectral analysis of asymptotic operators}
\label{sec:spectral-perturbation}

In this section, we derive consequences of Corollary \ref{cor:A-in-LC} assuming
$T \neq 0$.

The ellipticity of $A^\pi_{(\lambda,J,\nabla)}$ implies that it
 has a discrete set of eigenvalues, which we enumerate into
$$
\cdots < \mu_{-k} < \cdots < \mu_{-1} < 0 < \mu_1 < \cdots < \mu_k <
\cdots
$$
with repeated finite multiplicity allowed,
where $\mu_k \to \infty$ (resp. $\mu_{-k} \to -\infty$) as $k \to \infty$.
It also has a uniform spectral gap (\cite[Theorem 6.29]{kato}) in that
there exists some $d = d(A^\pi_{(\lambda,J,\nabla)}) > 0$ such that
\be\label{eq:spectral-gap}
|\mu_{i+1} - \mu_i| \geq d > 0
\ee
for all $i$.

The explicit form of asymptotic operator on
$J$ and $\lambda$ given in the previous section
enables us to study the perturbation theory of
the asymptotic operator in terms of the change of $J$,
which we do in the next section.

In the rest of this section, we prove the following generic simpleness
statement using the perturbation theory of self-adjoint operators.

\begin{thm}[Simpleness of eigenvalues]
\label{thm:simple-eigenvalue}  Let $(Q,\xi)$ be a contact manifold.
Assume that $\lambda$ is
nondegenerate. For a generic choice of compatible pair $(\lambda,J)$,
 all eigenvalues $\mu_i$ of the asymptotic
operator are simple for all closed Reeb orbits.
\end{thm}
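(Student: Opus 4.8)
The plan is to keep the nondegenerate contact form $\lambda$ fixed and to perturb only the CR almost complex structure $J$ inside $\CJ_\lambda = \CJ_\lambda(Q,\xi)$. Because $\lambda$ is nondegenerate its closed Reeb orbits are isolated, so (assuming $Q$ closed, or restricting to any relevant region) they form a countable family $\{\gamma_a\}$, each listed together with all of its iterates; the set of orbits does not move with $J$. For each $a$ I will show that
$$
\CJ_\lambda^{a} \ := \ \{\, J \in \CJ_\lambda \ :\ \text{every eigenvalue of } A^\pi_{(\lambda,J,\nabla)} \text{ along }\gamma_a \text{ is simple}\,\}
$$
is residual in $\CJ_\lambda$; then $\bigcap_a \CJ_\lambda^{a}$ is residual by Baire, and that is the claimed set. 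To have a Banach manifold available for the perturbation argument one first works with the $C^\ell$-completion $\CJ_\lambda^{(\ell)}$ for a large finite $\ell$ --- note $A^\pi_{(\lambda,J,\nabla)}$ involves $J$ and one derivative of $J$ (through $\CL_{R_\lambda}J$ and the triad/Levi--Civita connection), so any $\ell \ge 3$ suffices --- and then passes to $C^\infty$ by the usual Taubes trick.

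Fix $\gamma = \gamma_a$ with $T = \int\gamma^*\lambda \ne 0$. Let $\cdots \le \nu_j(J) \le \nu_{j+1}(J) \le \cdots$ ($j\in\Z$) be the eigenvalues of $A^\pi_{(\lambda,J,\nabla)}$ along $\gamma$, repeated with multiplicity and listed in increasing order; each $\nu_j$ depends continuously on $J$. The complement of $\CJ_\lambda^{a}$ is the countable union $\bigcup_{j}\{J : \nu_j(J) = \nu_{j+1}(J)\}$ of closed sets, so it is enough to prove that each of these has empty interior. Suppose then that $A^\pi_{(\lambda,J_0,\nabla)}$ has an eigenvalue $\mu_0$ of multiplicity $m \ge 2$ with eigenspace $E_{\mu_0} \subset W^{1,2}(\gamma^*\xi)$. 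By Kato's analytic perturbation theory \cite{kato}, for $J$ near $J_0$ there is a smooth family of symmetric $m\times m$ real matrices $M(J)$ with $M(J_0) = \mu_0\,\mathrm{Id}_m$ whose eigenvalues are exactly the eigenvalues of $A^\pi_{(\lambda,J,\nabla)}$ near $\mu_0$, and, for an $L^2$-orthonormal basis $\{e_1,\dots,e_m\}$ of $E_{\mu_0}$, one has $(dM(J_0)B)_{ij} = \langle (\delta_B A^\pi) e_i, e_j\rangle_{L^2}$. If one can find $B \in T_{J_0}\CJ_\lambda$ with $dM(J_0)B$ not a scalar multiple of the identity, then along a path through $J_0$ in the direction $B$ the cluster at $\mu_0$ splits into at least two pieces, so the multiplicity of any coincidence $\nu_j = \nu_{j+1}$ strictly drops; an induction on $m$ then yields, arbitrarily close to $J_0$, a $J$ with $\nu_j(J) < \nu_{j+1}(J)$, i.e.\ $J_0$ is not interior to $\{\nu_j = \nu_{j+1}\}$.

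It remains to produce such a $B$, and three facts go into this. First, by the splitting of Corollary \ref{cor:A-in-LC}, the eigenvalue equation $A^\pi e = \mu_0 e$ is the linear first-order ODE $\nabla^{\text{\rm LC}}_t e = J\big((\mu_0 + \tfrac{T}{2})\,\mathrm{Id} - \tfrac{T}{2}(\CL_{R_\lambda}J)J\big)e$ along $\gamma$; hence a nonzero eigenfunction vanishes nowhere and the evaluation $E_{\mu_0} \to \xi_{\gamma(t)}$, $e\mapsto e(t)$, is injective for every $t$. Second, differentiating Corollary \ref{cor:A-in-LC} along $J_0$, the $J$-antilinear part of $\delta_B A^\pi$ is $\tfrac{T}{2}\big((\CL_{R_\lambda}B)J + (\CL_{R_\lambda}J)B\big)$, while the $J$-linear part (the variation of $-J\nabla^{\text{\rm LC}}_t - \tfrac{T}{2}\,\mathrm{Id}$) contributes terms coming from the variation of the triad metric and its Levi--Civita connection. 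Third, taking $B$ supported in a small ball around a point $p = \gamma(t_0)$: after integrating by parts along $\gamma$, the $\CL_{R_\lambda}B$-term contributes through the pointwise values of $B$ along the orbit, and the $g$-symmetric, $J$-antilinear endomorphisms of $\xi_p$ through which it acts are exactly the maps $\zeta \mapsto S\bar\zeta$ with $S$ complex symmetric; one computes that, to leading order, $(dM(J_0)B)_{ij}$ is a positive multiple of $\mathrm{Im}\big(\overline{e_i(t_0)}^{\,T} S\, \overline{e_j(t_0)}\big)$. Since the vectors $e_i(t_0)$ are nonzero and $\R$-linearly independent, one may choose $t_0$ and then $S$ for which this matrix is not scalar --- the only obstruction being configurations in which $\mathrm{span}_\R\{e_i(t)\}$ meets its $J$-image for every $t$, which forces an overdetermined relation on $\CL_{R_\lambda}J$ along $\gamma$ and is ruled out for generic $J$ (at the non-generic Reeb--Killing locus $\CL_{R_\lambda}J\equiv 0$ the antilinear part of $A^\pi$ vanishes and multiple eigenvalues are in fact unavoidable, which is why the genericity of $J$ is essential here).

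The main obstacle is this last step made precise: one must track the $J$-linear, connection-dependent part $\delta_B(-J\nabla^{\text{\rm LC}}_t)$, which is of the same order as the leading antilinear contribution under shrinking of the support of $B$, and one must show that the genericity of $J$ really does exclude the degenerate eigendata configurations (equivalently, that one can reduce to a point $t_0$ where the evaluated eigenfunctions are suitably $\C$-nondegenerate --- automatic when $m \le n$, and needing a multi-point refinement when $m > n$). A secondary bookkeeping point is that an iterated orbit $\gamma_0^k$ is an embedded circle traversed $k$ times, so a localized perturbation of $J$ acts on all $k$ passes simultaneously; the needed freedom is recovered by also varying the base point along $\gamma_0$. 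Finally, running the same reduction with one additional parameter $\tau$ along a contact instanton gives the transversality of eigenvalue families underlying the spectral-flow formula of Theorem \ref{thm:spectral-flow}.
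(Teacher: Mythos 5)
Your overall scheme coincides with the paper's: fix the nondegenerate $\lambda$, perturb only $J\in\CJ_\lambda$, reduce via spectral projection/Kato to a family of finite-dimensional symmetric matrices on each eigenspace, and conclude by Baire over the countably many orbits and eigenvalues. But the decisive step --- exhibiting, at \emph{every} $J_0$ with a multiple eigenvalue, a variation $B\in T_{J_0}\CJ_\lambda$ whose induced matrix on the eigenspace is non-scalar --- is exactly what you leave open, and the route you sketch for it does not close. You differentiate the Levi--Civita formula of Corollary \ref{cor:A-in-LC}, so $\delta_B A^\pi$ contains variation terms of the triad metric and of $\nabla^{\text{\rm LC}}$ which, as you admit, are of the same order as your leading antilinear term under shrinking of $\supp B$; the claimed leading-order expression $\Im\bigl(\overline{e_i(t_0)}^{\,T} S\,\overline{e_j(t_0)}\bigr)$ is therefore not justified. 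Worse, your escape from the ``degenerate eigendata configurations'' is circular: you invoke genericity of $J$ to exclude them, but the empty-interior argument must produce a splitting perturbation at an \emph{arbitrary} $J_0$ in the coincidence set (including, e.g., $J_0$ with $\CL_{R_\lambda}J_0\equiv 0$ along $\gamma$, where every eigenvalue has even multiplicity); you cannot assume $J_0$ is already generic there. The case distinction ``automatic when $m\le n$, multi-point refinement when $m>n$'' and the treatment of multiply covered orbits by ``varying the base point'' are likewise asserted, not proved.

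The paper closes precisely this gap by a different mechanism, which you may want to compare. It first rewrites the asymptotic operator in the connection-free form of Theorem \ref{thm:A=T}, \eqref{eq:A-in-CL2}, so that the $J$-variation is computed exactly, with no metric/connection bookkeeping: $d_JF(B) = -\tfrac{T}{2}(\text{\rm Id}+J)\CL_{R_\lambda}B - TB\bigl(\CL_{R_\lambda}+\tfrac12\CL_{R_\lambda}J\bigr)$ as in \eqref{eq:dJFB}. Substituting the eigenfunction equation (Lemma \ref{lem:eigenfunction}) turns $d_JF(B)e_j$ into $(M\CL_{R_\lambda}B + BN)e_j$ with $M=-\tfrac{T}{2}(\text{\rm Id}+J)$ pointwise invertible, and then Lemma \ref{lem:solving-for-B} solves the resulting linear first-order ODE along $\gamma$ for $B$ so as to realize \emph{any} prescribed symmetric endomorphism $\widetilde L$ of the eigenspace (Lemma \ref{lem:wendl}), giving the full epimorphism of Proposition \ref{prop:epimorphism} at every $J$ in the discriminantal preimage --- generic or not --- and with no localization near a point, hence no leading-order analysis and no special role for iterated orbits in that step. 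If you want to salvage your localized-bump approach, you would need to actually compute the connection-variation terms (or switch to the connection-free formula as the paper does) and replace the appeal to genericity by an argument valid at every degenerate $J_0$; as written, the proposal does not prove the theorem.
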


We first outline the scheme of the proof as follows:
\begin{enumerate}
\item For each $k$, we take the spectral decomposition
$$
W^{2,2}(\gamma^*\xi) = M_k \oplus M_k^\perp
$$
where $M_k: = \ker(A^\pi_{(\lambda,J,\nabla)} - \mu_k \text{\rm Id})$ is the eigenspace with
eigenvlue $\mu_i$.
We denote by
$\Pi_k:L^2(\gamma^*\xi) \to L^2(\gamma^*\xi)$ the  idempotent
associated to the orthogonal  projection
$\pi_k: L^2(\gamma^*\xi) \to M_k$.
We know that $M_k$ is a finite dimensional subspace and
$A^\pi_{(\lambda,J,\nabla)}$ restricts to a diagonalizable linear map denoted by $A_{k;\gamma}(J): M_k \to M_k$
for each $k = 0, \cdots $. We also define the associated map
\be\label{eq:Akgamma}
A_{k,\gamma}: \CJ_\lambda \to \End(M_k); \quad J \mapsto A_{k;\gamma}(J).
\ee
\item For each $k$, we apply the perturbation theory of
linear maps in finite dimensional vector spaces \cite[Chapter 1]{kato}
and show that the set of $J$ from which the
linear map $A_{k;\gamma}(J) $ has simple eigenvalues.
\end{enumerate}

\subsection{Spectral perturbation theory under $J$'s}

Let $k \geq 0$ be given and
$$
W^{2,2}(\gamma^*\xi) = M_k \oplus M_k^\perp
$$
be the decomposition mentioned as above.
We consider the diagonalizable linear map denoted by
$$
A_{k;\gamma}(J): M_k \to M_k
$$
given by restricting the operator $A^\pi_{(\lambda,J,\nabla)}$  to
$M_k$, i.e.,
$$
A_{k;\gamma} (J) := \pi_k A^\pi_{(\lambda,J,\nabla)}|_{M_k}
$$
where $\pi_k$ is the $L^2$-projection to $M_k$.

By choosing an $L^2$-orthonormal basis of $M_k$,
$$
\{e_1, e_2, \cdots, e_{m_k}\} = : \CB_k,
$$
we define a map
$$
\CJ_\lambda \to \End(\R^{m_k}) ; \quad J \mapsto
[A_{k;\gamma}(J)]_{\CB_k}
$$
where $[A_{k;\gamma}(J)]_{\CB_k}$ is the matrix of $A_{k;\gamma}(J)$ with respect to the basis $\CB_k$. This matrix is a symmetric matrix.

\begin{defn}\label{defn:discriminantal-variety} Define
$\Sym^{\text{\rm simp}}(\R^{m_k})$ to be the set of symmetric
matrices that has simple eigenvalues. We call the complement
\be\label{eq:discriminantal-variety}
\Sym(\R^{m_k}) \setminus \Sym^{\text{\rm simp}}(\R^{m_k})
\ee
the \emph{discriminantal variety}.
\end{defn}

Here the following remark provides that the complement
\eqref{eq:discriminantal-variety} is an algebraic variety
for which we can apply the (stratawise) transversality theorem.

\begin{rem}
Consider the characteristic polynomial of $A_{k;\gamma}(J)$ given by
$$
p_{k,\gamma}(J)(\mu): = \det (\mu \text{\rm Id} - A_{k;\gamma}(J))
$$
i.e.,
$$
p_{k,\gamma}(J) = \Char \circ A_{k;\gamma}(J)
$$
where $\Char(A): = \det((\cdot) I - A)$
is the characteristic
polynomial of the matrix $A$. We denote
its discriminant by $\Delta(p_{k,\gamma}(J))$: Recall that the discriminant
$\Delta(f) = \Delta_n(f)$ in general is a certain homogeneous polynomial with degree $2n-2$
of the coefficients of degree $n$ polynomial
$$
f = a_0 + a_1 x + \cdots + a_n x^n
$$
such that $f$ has a multiple root if and only if $\Delta(f) = 0$.
Therefore the zero set of $\Delta(f)$ defines a codimension 1 algebraic
variety of $\R P^{2n-2}$. (See \cite[Chapter 12]{gelfand-kap-zel} for
a summary of the properties of discriminants.) Then
\eqref{eq:discriminantal-variety} is the preimage of
$$
p_{k,\gamma}^{-1}(\Delta^{-1}(0)) = (\Delta \circ p_{k,\gamma})^{-1}(0).
$$
\end{rem}

Recall that the tangent space $T_J \CJ_\lambda$ can be written as
\bea\label{eq:TJCJ}
T_J \CJ_\lambda & = & \{ B \in \Gamma(\text{\rm End}(\xi)) \mid
 BJ + JB = 0,  \, d\lambda(B(\cdot), J (\cdot))
 + d\lambda(J(\cdot), B(\cdot)) =0 \}
 \nonumber\\
\eea
where the second equation means nothing but that $B$ is
a symmetric endomorphism of the metric $d\lambda(\cdot, J \cdot)|_\xi$
on $\xi$. (See \cite{floer:unregularized}, \cite[p.339]{oh:book1}
for a similar description in the symplectic case.)
We note that by definition $T_J \CJ_\lambda$ can be expressed as
 a fiber bundle
\be\label{eq:Slambda}
S_\lambda \to Q
\ee
whose fiber is given by $S_{\lambda,x} = T_J \CJ_\lambda|_x$
which is isomorphic to
$$
 S_{\lambda,x}= \{B \in \text{\rm End}(\R^{2n}) \mid
 B J_0 + J_0 B = 0,  \, d\lambda(B (\cdot),J_0(\cdot))
 + d\lambda(J_0 (\cdot), B(\cdot))=0 \}
 $$
 where $J_0$ is the standard complex structure of
 complex multiplication by $\sqrt{-1}$ with the identification
 $\R^{2n} \cong \C^n$.

Now we have the following proposition the proof of which
we postpone till the next subsection

\begin{prop}\label{prop:transversality} Let $\lambda$ be any
nondegenerate contact form $(Q,\xi)$, and let $(\gamma,T)$ be an
isospeed closed Reeb orbit with the loop $\gamma:S^1 \to Q$ and
$\int \gamma^*\lambda = T$. Consider the pull-back fiber bundle
$\gamma^*S_\lambda \to S^1$. Then the map
$$
A_{k;\gamma}: \CJ_\lambda \to \End(M_k) \cong M^{m_k\times m_k}(\R)
$$
is stratawise transverse to $(\Delta_{m_k})^{-1}(0)$
for all $\gamma \in \mathfrak{Reeb}(\lambda)$.
\end{prop}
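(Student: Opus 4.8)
The plan is a Sard--Smale transversality argument whose only non-formal ingredient is the explicit \emph{zeroth-order} nature of the first variation of $A^\pi_{(\lambda,J,\nabla)}$ in $J$, made transparent by the tensorial formula of Proposition~\ref{prop:A}. Recall first that the discriminantal variety \eqref{eq:discriminantal-variety} is a Whitney-stratified subset of $\Sym(\R^{m_k})$ whose strata are labelled by the coincidence pattern of eigenvalues: the top stratum $\Sigma_2$ (exactly one double eigenvalue) has codimension $2$, and every deeper stratum has codimension $\geq 3$. ``Stratawise transverse to $(\Delta_{m_k})^{-1}(0)$'' means transverse to each of these strata, and for this it is enough to prove that at every $J$ the derivative $d(A_{k;\gamma})_J\colon T_J\CJ_\lambda \to \Sym(M_k)$ is \emph{surjective}; surjectivity onto $\Sym(M_k)$ trivially forces transversality to every stratum.

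Fix $J_0$. Although $M_k = M_k(J)$ moves with $J$, Kato's analytic perturbation theory \cite[Ch.~II]{kato} supplies, on a neighbourhood of $J_0$, the eigenprojection $\Pi_k(J)$ of $A^\pi_{(\lambda,J,\nabla)}$ onto the spectral cluster at $\mu_k(J_0)$, together with a transformation function identifying $M_k(J)$, orthonormally for the ($J$-dependent) triad $L^2$-metric, with $M_k := M_k(J_0)$; in a fixed basis $\CB_k$ of $M_k$ the reduced operator becomes a smooth family of symmetric matrices, so $A_{k;\gamma}$ is a genuine smooth map near $J_0$. The standard first-order formula of that theory gives
\be
d(A_{k;\gamma})_{J_0}(B) = \Pi_k\big(\delta_B A^\pi_{(\lambda,J_0,\nabla)}\big)|_{M_k} + (\text{correction from }\delta_B g),
\ee
the contribution of $\dot\Pi_k$ dropping out because it is off-diagonal while $A^\pi|_{M_k} = \mu_k\,\text{\rm Id}$ is scalar, and the correction term being the one that forces the total to be symmetric. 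By \eqref{eq:Slambda} the relevant variations $B$ are sections of $\gamma^*S_\lambda$, and since the computation is local we may take $B$ supported near a single time $t_0 \in S^1$ with arbitrarily prescribed $1$-jet at $\gamma(t_0)$.

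Differentiating $A^\pi_{(\lambda,J,\nabla)} = -J\nabla_t + \frac{T}{2}(\CL_{R_\lambda}J)J$ in $J$, using Proposition~\ref{prop:nablaRlambdaY} to replace $\CL_{R_\lambda}$ along $\gamma$ by $\frac{1}{T}\nabla_t$ plus a zeroth-order term and using that $\delta_B\nabla$ is a tensor built from $B$ and $\nabla B$, one finds $\delta_B A^\pi = -B\nabla_t + P_B$ with $P_B$ multiplication by a bundle endomorphism depending linearly and pointwise on $(B,\nabla B)$. The key cancellation is that on $M_k$ the first-order term also becomes zeroth order: an eigensection $\zeta \in M_k$ satisfies $\nabla_t\zeta = \mu_k J\zeta - \frac{T}{2}J(\CL_{R_\lambda}J)J\zeta$, so $\nabla_t|_{M_k}$ is itself an endomorphism. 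Hence, combining with the correction above, $d(A_{k;\gamma})_{J_0}(B)$ is represented as $e_a,e_b \mapsto \int_{S^1}\langle C_B(t)\,e_a(t),\ e_b(t)\rangle_g\,dt$ for $e_a,e_b\in\CB_k$, where $C_B(t)\in\End(\gamma^*\xi)_t$ depends linearly and pointwise on $(B(t),\nabla B(t))$. Now $A^\pi_{(\lambda,J_0,\nabla)}$ is a first-order ODE operator along $S^1$, so every nonzero eigensection is nowhere vanishing, and the same ODE (a nontrivial constant-coefficient relation among eigensections would contradict orthonormality) shows that the members of $\CB_k$ stay pointwise linearly independent on a nonempty open set of times; a prime closed Reeb orbit is moreover embedded, so on that set $\gamma$ is injective. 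Choosing $t_0$ there and concentrating $C_B$ near $t_0$, surjectivity of $d(A_{k;\gamma})_{J_0}$ onto $\Sym(M_k)$ reduces, via the injectivity of $u\mapsto\sum_a u_a e_a(t_0)$ and a bump-function argument in $t$, to the pointwise statement that the $C_B(t_0)$ fill out a large enough subspace of $\Sym(\xi_{\gamma(t_0)})$, with all of $\Sym(\xi_{\gamma(t_0)})$ as the cleanest target.

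I expect the main obstacle to be exactly this last pointwise step: one must read off, from the explicit formula for $C_B$, that the symmetric endomorphisms produced by honest variations of $J$ and their covariant derivatives span enough of $\Sym(\xi_{\gamma(t_0)})$. Its $J$-anti-linear part is easy: the term $-\mu_k B(t_0)J$ already sweeps out all $J$-anticommuting $g$-symmetric endomorphisms, since $B_0\mapsto B_0 J$ is a bijection of that space and $\mu_k\neq0$ by nondegeneracy of $\lambda$; the real content is the $J$-linear part, fed by the terms involving $\CL_{R_\lambda}J$ and $\nabla B$ (and by the metric-variation correction), which is where the coordinate-free formula of Proposition~\ref{prop:A} --- rather than an abstract appeal to \cite{kato} --- is indispensable and where one may have to combine perturbations at several times or exploit the full $1$-jet of $B$. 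A secondary point requiring care is a \emph{multiply covered} orbit $\gamma = \gamma_0^{(m)}$, for which no variation of $J$ on $Q$ localizes at one time: the support of $B$ then meets all $m$ preimages of $\gamma(t_0)$ at once, and one recovers surjectivity by averaging over the cyclic $\Z/m$ symmetry of the cover, again using that eigensections are nowhere zero. Finally, the $J$-dependence of the triad metric and of $\nabla = \nabla(J)$ enters only through zeroth-order tensorial terms and the smooth transformation function and does not alter the structure of the argument.
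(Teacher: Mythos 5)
Your reduction of the Proposition to surjectivity of the $J$-derivative of $A_{k;\gamma}$, your use of the explicit tensorial formula, and your use of the fact that eigensections solve a first-order ODE (hence are nowhere vanishing and pointwise independent) all match the skeleton of the paper's argument (Proposition \ref{prop:epimorphism}). But your proposal has a genuine gap precisely at the step you yourself flag: after localizing $B$ near a time $t_0$, you must show that the endomorphisms $C_B(t_0)$, built pointwise from $(B(t_0),\nabla B(t_0))$ with $B$ a section of $\gamma^*S_\lambda$, induce \emph{all} of $\Sym$ of the span of $\{e_a(t_0)\}$, and you leave this unproven. This is not a routine verification, and as a pointwise statement it is genuinely doubtful: every $B\in T_J\CJ_\lambda$ anti-commutes with $J$, one checks that $(\text{\rm Id}+J)\beta$ is again $J$-anti-commuting whenever $\beta$ is, and the Leibniz identity $(\CL_{R_\lambda}B)J+J(\CL_{R_\lambda}B)=-\bigl(B(\CL_{R_\lambda}J)+(\CL_{R_\lambda}J)B\bigr)$ shows that the $J$-linear part of the derivative term is completely determined by the pointwise value of $B$ and by $\CL_{R_\lambda}J$; the free data ($\nabla_t B$, or the bump profile) only feeds the $J$-anti-commuting part. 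In particular, if $\CL_{R_\lambda}J$ vanishes along $\gamma$ and the eigenbasis contains a pair $e, Je$, then any $g$-symmetric $J$-anti-commuting $C$ satisfies $\langle Ce,e\rangle=-\langle CJe,Je\rangle$, so the pointwise image misses an entire direction of $\Sym(M_k)$. So the bump-function/pointwise-span strategy cannot be completed as stated, and your fallback remarks (``combine perturbations at several times or exploit the full $1$-jet of $B$'') are exactly the missing content, not a secondary refinement.

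The paper's proof takes a different, global-in-$t$ route at this point. Using the eigensection identity (Lemma \ref{lem:eigenfunction}) it rewrites the restriction of $d_JF(B)$ to the eigenspace as $M\,\nabla^\phi_t B+BN$ with $M=-\frac{T}{2}(\text{\rm Id}+J)$ \emph{pointwise invertible}, and then, instead of trying to span $\Sym(\xi_{\gamma(t_0)})$ pointwise, it prescribes an extension $\widetilde L$ of an arbitrary symmetric $L\in\End(M_k)$ (possible by the pointwise independence of the eigensections, Lemma \ref{lem:wendl}) and \emph{solves the inhomogeneous first-order linear ODE} $M\nabla_t B+MC_\phi B+BN=\widetilde L$ for $B$ along the whole orbit by parallel transport and variation of constants (Lemma \ref{lem:solving-for-B}). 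In other words, surjectivity is obtained by inverting the derivative-in-$B$ term along $\gamma$, not by the pointwise values of a localized variation; if you try to repair your argument by integrating the $\nabla_t B$ contribution against the eigensections over an interval, you will essentially be reconstructing this ODE argument. Your remaining points (the Kato reduction to the finite-dimensional block, the stratification of the discriminant, the multiple-cover caveat) are consistent with the paper but do not close this central step.
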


Postponing the proof of this proposition till the next subsection,
we proceed with the proof of Theorem \ref{thm:simple-eigenvalue}.

This proposition proves the subset
$A_{k;\gamma}^{-1}(\Delta_{m_k}^{-1}(0))$
is a stratified smooth submanifold of real codimension 1 for each
given $k \geq 1$. In particular the complement
$$
\CJ_\lambda \setminus A_{k;\gamma}^{-1}(\Delta_{m_k}^{-1}(0))
$$
 is a residual subset of
$\CJ_\lambda$. Therefore their countable intersection
$$
\bigcap_{k \geq 1}
\left(\CJ_\lambda \setminus A_{k;\gamma}^{-1}(\Delta_{m_k}^{-1}(0)) \right)
= : \CJ_{\lambda}^{\text{\rm sm}}(\gamma)
$$
is still a residual subset of $\CJ_\lambda$. Since the set
$\mathfrak{Reeb}(\lambda)$ is a countable set, the intersection
$$
\bigcap_{\gamma \in \mathfrak{Reeb}(\lambda)}
\CJ_\lambda^{\text{\rm sm}}(\gamma)
$$
is still a residual subset of $\CJ_\lambda$. By definition, this last set
is precisely those $J$'s for which the associated asymptotic operator
carries simple eigenvalues.

This will complete the proof of
Theorem \ref{thm:simple-eigenvalue} except the proof of Proposition \ref{prop:transversality}
which is now in order.

\subsection{Proof of generic simpleness of eigenvalues}

Let $\gamma \in \mathfrak{Reeb}(\lambda)$ be given and $k$ be fixed.
We consider the assignment $J \mapsto A_{(\lambda, J, \nabla)}$
as a map
$$
F: \CJ_\lambda \to \Fred\left(W^{2,p}(\gamma^*\xi), W^{1,p}(\gamma^*\xi)\right),
$$
obtained by assigning the operator
$$
A^\pi_{(\lambda,J,\nabla)} = - J\nabla_t
+ \frac{T}{2} (\CL_{R_\lambda}J)J
$$
to $J$
in \eqref{eq:A-in-LC}
where $ \Fred(W^{2,2}(\gamma^*\xi), W^{1,2}(\gamma^*\xi))$
is the Banach space of Fredholm operators from $W^{2,2}(\gamma^*\xi)$
to $W^{1,2}(\gamma^*\xi)$.  To compute the variation of the assignment
$$
F: J \mapsto A_{(\lambda,J,\nabla)}^\pi
$$
we use \eqref{eq:nablaRlambdaY}
to convert the formula for $A^\pi_{(\lambda,J,\nabla)}$ into the following
which explicitly shows that the operator does not depend on the choice of
connection.
\begin{thm}\label{thm:A=T} Let $(Q,\lambda,J)$ be a triad. Let $(\gamma, T)$ be an
iso-speed Reeb trajectory. Then we have the formula
\be\label{eq:A-in-CL2}
A^\pi_{(\lambda,J,\nabla)} = T\left(-\frac12 \CL_{R_\lambda}J -  J \CL_{R_\lambda}
- \frac{1}{2} J(\CL_{R_\lambda}J)\right)
\ee
when acted upon $\Gamma(\gamma^*TM)$.
\end{thm}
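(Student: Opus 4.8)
The plan is to obtain \eqref{eq:A-in-CL2} by a direct substitution that removes the connection from the formula of Proposition \ref{prop:A}. Recall that Proposition \ref{prop:A} gives
\[
A^\pi_{(\lambda,J,\nabla)} = - J\nabla_t + \frac{T}{2}(\CL_{R_\lambda}J)J,
\]
where $\nabla$ is the contact triad connection (so that $\nabla_t$ acts on $\Gamma(\gamma^*\xi)$ through its $\xi$-part $\nabla^\pi$), while Proposition \ref{prop:nablaRlambdaY} rewrites the Reeb-directional covariant derivative intrinsically as $\nabla_{R_\lambda}Y = \CL_{R_\lambda}Y + \frac12(\CL_{R_\lambda}J)JY$ for $Y \in \Gamma(\xi)$. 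The only genuinely connection-dependent ingredient in $A^\pi_{(\lambda,J,\nabla)}$ is the term $\nabla_t$, and Proposition \ref{prop:nablaRlambdaY} is precisely what is needed to express it in terms of $\lambda$ and $J$ alone.

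First I would invoke the iso-speed condition $\dot\gamma(t) = T R_\lambda(\gamma(t))$ to identify, on $\Gamma(\gamma^*\xi)$, the covariant time-derivative along $\gamma$ with $T\nabla_{R_\lambda}$. Substituting Proposition \ref{prop:nablaRlambdaY} then turns the formula of Proposition \ref{prop:A} into
\[
A^\pi_{(\lambda,J,\nabla)} = - T J\CL_{R_\lambda} - \frac{T}{2} J(\CL_{R_\lambda}J)J + \frac{T}{2}(\CL_{R_\lambda}J)J .
\]
Next I would reduce the two zeroth-order terms using the anticommutation relation $J\circ(\CL_{R_\lambda}J) = -(\CL_{R_\lambda}J)\circ J$ on $\xi$, which follows by applying $\CL_{R_\lambda}$ to the identity $J^2 = -\Pi$. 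This yields $J(\CL_{R_\lambda}J)J = -(\CL_{R_\lambda}J)J^2 = \CL_{R_\lambda}J$ and $(\CL_{R_\lambda}J)J = -J(\CL_{R_\lambda}J)$, so the previous display collapses to
\[
A^\pi_{(\lambda,J,\nabla)} = T\left(-\frac12\CL_{R_\lambda}J - J\CL_{R_\lambda} - \frac12 J(\CL_{R_\lambda}J)\right),
\]
which is \eqref{eq:A-in-CL2}. For the clause ``when acted upon $\Gamma(\gamma^*TM)$'', I would observe that each operator on the right-hand side is defined a priori on $\gamma^*TQ$, and that $\CL_{R_\lambda}$, $\CL_{R_\lambda}J$ and $J$ all preserve the subbundle $\gamma^*\xi$, on which the equality holds and recovers $A^\pi_{(\lambda,J,\nabla)}$.

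I do not expect a genuine obstacle here, since the content is packaged into Propositions \ref{prop:A} and \ref{prop:nablaRlambdaY}; what remains is bookkeeping. The points deserving care are the consistent reading of the symbols --- $\CL_{R_\lambda}$ as the first-order operator $Y \mapsto [R_\lambda, Y]$ on sections, as opposed to $\CL_{R_\lambda}J$ as the zeroth-order endomorphism $(\CL_{R_\lambda}J)(\cdot)$ --- and the verification that all relevant terms map $\xi$ into $\xi$, so that the $\pi$-projection implicit in $\nabla^\pi$ is harmless; this last point rests on the torsion property $T(\cdot, R_\lambda) = 0$ of the triad connection already used in Proposition \ref{prop:nablaRlambdaY}.
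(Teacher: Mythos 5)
Your proposal is correct and follows essentially the same route as the paper: use the iso-speed condition to identify $\nabla_t$ with $T\nabla_{R_\lambda}$ on $\Gamma(\gamma^*\xi)$, substitute \eqref{eq:nablaRlambdaY}, and simplify via the anticommutation $(\CL_{R_\lambda}J)J = -J(\CL_{R_\lambda}J)$ coming from differentiating $J^2=-\Pi$ (with $\CL_{R_\lambda}\Pi=0$). The intermediate expressions and the final bookkeeping coincide with the paper's proof of Theorem \ref{thm:A=T}.
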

\begin{proof} Recalling $\dot \gamma(t) = TR_\lambda(\gamma(t))$ and the definition of
the pull-back connection in general, we have
$$
(\nabla_t \eta)(t) = \nabla_{T R_\lambda(\gamma(t))} Y|_{\gamma(t)} =
T \nabla_{R_\lambda(\gamma(t))} Y|_{\gamma(t)}
$$
where $\eta \in \Gamma(\gamma^*\xi)$ and $Y$ is a  vector field $Y$
such that $\eta(t) = Y(\gamma(t))$ locally defined near
the point $\gamma(t)$ for given $t \in S^1$.

Therefore with a slight abuse of notation, utilizing \eqref{eq:nablaRlambdaY},
we rewrite
\beastar
A^\pi_{(\lambda,J,\nabla)}
& = & - J T \nabla_{R_\lambda} +   \frac{T}{2} (\CL_{R_\lambda}J) J \nonumber\\
& = & -\frac{T}2 J\CL_{R_\lambda}JJ - TJ \CL_{R_\lambda}
- \frac{T}{2} J(\CL_{R_\lambda}J)\nonumber\\
& = & -\frac{T}2 \CL_{R_\lambda}J - T J \CL_{R_\lambda}
- \frac{T}{2} J(\CL_{R_\lambda}J) \nonumber\\
& = & T\left(-\frac12 \CL_{R_\lambda}J -  J \CL_{R_\lambda}
- \frac{1}{2} J(\CL_{R_\lambda}J)\right).
\eeastar
This finishes the proof.
\end{proof}

Therefore its variation under $\delta J = B$ is given by
\bea\label{eq:dJFB}
d_J F(B)
 & = & T \left(- \frac12 \CL_{R_\lambda} B
- B \CL_{R_\lambda} -\frac12 B \CL_{R_\lambda}J
 - \frac12 J \CL_{R_\lambda} B\right) \nonumber \\
 & = & -\frac{T}2\left(\text{\rm Id} + J\right) \CL_{R_\lambda}
B - T B \left(\CL_{R_\lambda}
 + \frac12 \CL_{R_\lambda} J\right)
 \eea
for $B$ satisfying $JB + BJ = 0$.

In the following paragraph, we adapt the exposition of \cite[p.73]{ionel-parker:GW} in our
study of deformation of the asymptotic operators,
which is used in the context of Gromov-Witten theory.

We fix the Fredholm index $\iota$ and
We denote the set of such Fredholm operators (with index $\iota$) of $\dim_\R{\ker} = k$ by
$\mbox{\rm Fred}_k(J,\gamma;\mu)$ and their union by $\mbox{\rm Fred}_k^\iota(\gamma;\mu)$ i.e.,
\be\label{eq:Fred-J-mu}
\mbox{\rm Fred}_k^\iota(\gamma;\mu) = \bigcup_{J \in \CJ_\lambda}\mbox{\rm Fred}_k^\iota(J,\gamma;\mu)
\ee
as an infinite dimensional fiber bundle over $\CJ_\lambda$. In general,
by a theorem of Koschorke \cite{koschorke},
we have
$$
\text{\rm Fred}^\iota = \bigcup_{k,\gamma} \text{\rm Fred}_k^\iota
$$
where $\mbox{\rm Fred}_k^\iota$  of index $\iota$
is a submanifold of $\mbox{\rm Fred}$
with real codimension $k(k-\iota)$:
The normal bundle of $\mbox{\rm Fred}_k^\iota$ in $\mbox{\rm Fred}$ at an operator $D\in \mbox{Fred}_k$ is
$$
\mbox{Hom}_\R\left(\ker (D-\mu \text{\rm Id}),\mbox{coker} (D - \mu \text{\rm Id})\right).
$$
Considering the $L^2$-adjoint, we may identify this with
$$
\mbox{Hom}_\R(\ker(D - \mu \text{\rm Id}),\ker(D - \mu \text{\rm Id})).
$$

To show that a similar statement holds for the union of subsets thereof
\be\label{eq:Fred-J}
\mbox{\rm Fred}^\iota(\gamma;\mu) = \bigcup_{k}\mbox{\rm Fred}_k^\iota(J,\gamma;\mu).
\ee
we need to verify the restricted variations
arising from the change of $J$ is big enough.

\begin{prop}\label{prop:epimorphism} The map
$$
d_J A_{k;\gamma}: T_J\CJ_\lambda \to
T_{A_{k;\gamma}(J)}M^{m_k \times m_k}(\R) \cong
M^{m_k \times m_k}(\R)
$$
is an epimorphism at every $J \in (\Delta_{k;\gamma} \circ p_{k;\gamma})^{-1}(0)$.
\end{prop}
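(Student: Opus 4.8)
The plan is to dualize. Since the eigenspace $M_k \subset W^{2,2}(\gamma^*\xi)$ and the $L^2$-orthogonal projection $\pi_k$ onto it are fixed (they are the spectral data of $A^\pi_{(\lambda,J,\nabla)}$ at the given $J$), the derivative of $A_{k;\gamma}$ is simply the compression $d_J A_{k;\gamma}(B) = \pi_k\circ\big(d_J F(B)\big)\big|_{M_k}$, where $d_J F(B)$ is the self-adjoint operator given by \eqref{eq:dJFB}. The image lies in the subspace of $g$-symmetric endomorphisms of $M_k$, and with respect to the trace pairing a symmetric matrix $(a_{ij})$ annihilates this image precisely when $\sum_{i,j} a_{ij}\,\langle d_J F(B) e_i, e_j\rangle_{L^2} = 0$ for all $B \in T_J\CJ_\lambda$. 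Diagonalizing $(a_{ij})$ and taking $\{\phi_l\}$ to be the associated $L^2$-orthonormal eigenbasis of $M_k$ — each $\phi_l$ being then a genuine eigenfunction of $A^\pi_{(\lambda,J,\nabla)}$ with eigenvalue $\mu_k$ — the condition becomes $\sum_l \beta_l\,\langle d_J F(B)\phi_l,\phi_l\rangle_{L^2} = 0$ for all $B$, so the proposition is equivalent to the linear independence of the functionals $B \mapsto \langle d_J F(B)\phi_l,\phi_l\rangle_{L^2}$, i.e. to the implication $\big(\sum_l \beta_l\langle d_J F(B)\phi_l,\phi_l\rangle \equiv 0\big)\Rightarrow \beta_l = 0$ for all $l$.

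Next I would make this pairing pointwise. Substituting \eqref{eq:dJFB}: the zeroth-order-in-$B$ summand $-TB\big(\CL_{R_\lambda}+\tfrac12\CL_{R_\lambda}J\big)$ already contributes a term of the shape $\int_{S^1}\langle B(\gamma(t)),\Theta_0(t)\rangle_g\,dt$, whereas $-\tfrac{T}{2}(\mathrm{Id}+J)\CL_{R_\lambda}B$ carries the derivative $\nabla_t B$ along $\gamma$ (recall $\dot\gamma = T R_\lambda$), which is moved off $B$ by integrating by parts on $S^1$ using the metric-compatibility of $\nabla^\pi$. The point is that every resulting first derivative $\nabla_t\phi_l$ is then eliminated algebraically: from $A^\pi_{(\lambda,J,\nabla)}\phi_l = \mu_k\phi_l$ together with $A^\pi_{(\lambda,J,\nabla)} = -J\nabla_t + \tfrac{T}{2}(\CL_{R_\lambda}J)J$ (Proposition \ref{prop:A}) one gets $\nabla_t\phi_l = \mu_k J\phi_l - \tfrac{T}{2}J(\CL_{R_\lambda}J)J\phi_l$, which is of order zero in $\phi_l$. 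Collecting everything yields $\sum_l\beta_l\langle d_J F(B)\phi_l,\phi_l\rangle_{L^2} = \int_{S^1}\langle B(\gamma(t)),\Theta(t)\rangle_g\,dt$ with $\Theta(t) = \sum_l\beta_l\,\Xi\big(t;\phi_l(t)\big)$, where $\Xi(t;\cdot)$ is a fixed $\End(\xi_{\gamma(t)})$-valued quadratic form built only out of $J$, $\CL_{R_\lambda}J$, $\mu_k$ and $T$ at $\gamma(t)$.

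The last step would use the freedom in $B$. As $B$ ranges over all of $T_J\CJ_\lambda = \Gamma(S_\lambda)$, localizing $B$ near a point $\gamma(t_0)$ and letting it sweep the fibre $S_{\lambda,\gamma(t_0)}$ forces $\mathrm{proj}_{S_{\lambda,\gamma(t)}}\Theta(t) = 0$ for every $t$, where we use the description \eqref{eq:TJCJ} of $S_\lambda$ as the bundle of $J$-anti-linear, $g_\xi$-symmetric endomorphisms of $\xi$ (and reduce to an embedded $\gamma$ by passing to the underlying simple Reeb orbit, which is embedded for generic nondegenerate $\lambda$). It remains to see that $\sum_l\beta_l\,\mathrm{proj}_{S_\lambda}\Xi\big(t;\phi_l(t)\big)\equiv 0$ forces every $\beta_l = 0$. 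A short computation identifies the leading (in $\mu_k$) part of $\mathrm{proj}_{S_\lambda}\Xi(t;v)$ with a nonzero multiple of $\mu_k$ times the $S_\lambda$-component of the rank-one endomorphism $X \mapsto \langle X, Jv\rangle_g\, v$, which is nonzero whenever $v\neq 0$ (here $\mu_k\neq 0$ because $\lambda$ is nondegenerate); since each $\phi_l$, as a nonzero solution of the linear first-order equation $\nabla_t\phi_l = \mu_k J\phi_l - \tfrac T2 J(\CL_{R_\lambda}J)J\phi_l$, vanishes nowhere, and since the $\phi_l$ are linearly independent, one concludes $\beta_l = 0$. I expect this last step — verifying the non-degeneracy of $\mathrm{proj}_{S_\lambda}\Xi$ and, when $m_k > 1$, controlling how the vectors $\phi_l(t)$ sweep out a moving subspace of $\xi$ so as to rule out a persistent linear relation — together with the minor technical point of localizing $B$ when $\gamma$ is multiply covered, to be the main obstacle; everything upstream is a routine unwinding of \eqref{eq:dJFB} and \eqref{eq:A-in-CL2}.
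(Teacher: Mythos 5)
Your reduction in the first two steps is fine (the derivative is indeed the compression $\pi_k\,d_JF(B)|_{M_k}$ and surjectivity onto symmetric endomorphisms is what is needed), but the proof stalls exactly where you say it does, and that is not a minor technicality — it is the entire content of the proposition. After integrating by parts to remove $\nabla_t B$ you are left with a pointwise condition $\mathrm{proj}_{S_\lambda}\sum_l\beta_l\,\Xi(t;\phi_l(t))\equiv 0$, and you must show (i) that $\mathrm{proj}_{S_\lambda}\Xi(t;v)\neq 0$ for $v\neq 0$ and (ii) that no cancellation among the $\phi_l$ can persist when $m_k>1$. Your argument for (i) — that the ``leading part in $\mu_k$'' is a nonzero multiple of a rank-one term — is not a proof: $\mu_k$ is a fixed number, not a large parameter, so nonvanishing of a ``leading'' term says nothing about the full expression; and (ii) is left completely open. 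The multiply-covered case is also not repaired by ``passing to the underlying simple orbit'': the eigenfunctions of the asymptotic operator of a multiple cover do not descend to the simple orbit, while a localized $B$ is constrained at all preimage times simultaneously, so the pointwise condition you extract is only a sum over the fiber of $\gamma$ over a point of its image.

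The paper's proof avoids all of this by \emph{not} integrating by parts. Writing $d_JF(B)$ along $\gamma$ as $M\nabla_t B + MC_\phi B + BN$ with $M=-\tfrac{T}{2}(\mathrm{Id}+J)$ pointwise invertible (and $T\neq 0$ by nondegeneracy), it treats the desired action on the eigenspace as a \emph{target}: given the symmetric endomorphism $L$ of $M_k$, extend it to an endomorphism $\widetilde L$ of $\gamma^*\xi$ and solve the inhomogeneous first-order linear ODE $M\nabla_t B + MC_\phi B + BN=\widetilde L$ for $B$ (Lemma \ref{lem:solving-for-B}, via parallel transport and variation of constants). Then $d_JF(B)$ \emph{equals} $\widetilde L$ along $\gamma$, so its compression to $M_k$ is exactly $L$; no pointwise non-degeneracy of a quadratic form, no control of cancellations among the $\phi_l$, and no localization of $B$ is needed. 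In other words, the derivative term $\CL_{R_\lambda}B$, which you discarded by parts, is precisely what gives full surjectivity; converting it into a zeroth-order pairing throws away the mechanism that makes the restricted $J$-variations sufficient. As it stands your proposal has a genuine gap at the non-degeneracy/cancellation step, and I see no easy way to close it along the route you chose.
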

\begin{proof} To prove the proposition, we need to prove that for any element
$$
0 \neq \kappa, \, c \in \ker (A^\pi_{(\lambda,J,\nabla)} - \mu_k \text{\rm Id}) \subset \Gamma(\gamma^*\xi)
$$
we can find a variation $B \in T_J \CJ_\lambda$ such that
$$
\langle c, B \kappa \rangle_{L^2} \neq 0.
$$

The following lemma is a fundamental ingredient, the counterpart of \cite[Lemma C.1]{wendl:lecture}. However our variation is restricted to those arising
from $J$-variation of the assignment
$$
J \mapsto A^\pi_{(\lambda,J,\nabla)}
$$
based on the explicit $J$-dependent formula of $A^\pi_{(\lambda,J,\nabla)}$, while
Wendl used an abstract variation of Fredholm operators. Because of this
our proof is much more nontrivial than that of \cite[Lemma C.1]{wendl:lecture} which
strongly relies on the special form of $J$-dependence
of the linearization $ A^\pi_{(\lambda,J,\nabla)}$.

\begin{lem}\label{lem:wendl} Let $i \geq 0$ and
$L \in \End(\ker A^\pi_{(\lambda,J,\nabla)} - \mu_i \text{\rm Id})$ that is
symmetric with respect to the $L^2$ inner product. Then there exists a
smooth section
$$
B \in T_J \CJ_\lambda = \Gamma(\gamma^*S_\lambda) \subset \Gamma(\gamma^*\End(\xi))
$$
such that
$$
\langle c, B \kappa  \rangle_{L^2} = \langle c, L \kappa \rangle_{L^2}
$$
for all $\kappa, \, c \in \ker (A^\pi_{(\lambda,J,\nabla)} - \mu_i I)$.
\end{lem}
\begin{proof} We fix a basis
$
\CB: = \{e_1, \cdots, e_{m_i} \}
$
of $\ker (A^\pi_{(\lambda,J,\nabla)} - \mu_i \text{\rm Id})$.
Recall that each eigenfunction is a solution to
a first-order linear ODE and hence it is a nowhere varnishing
section of $\gamma^*\xi$. Furthermore
$$
\{e_1(t), \cdots, e_{m_i}(t) \} \in \xi_{\gamma(t)} = \CB(t)
$$
is linearly independent at each $t \in S^1$ and so spans
$m_i$ dimensional subspace of $\xi_{\gamma(t)}$.
In particular, we must have $m_i \leq 2n$ with $\dim Q = 2n+1$.

We extend the set $\CB(t)$ to an
orthogonal basis $\widetilde \CB(t)$ of $\xi_{\gamma(t)}$
and define a linear map $\widetilde L(t)$ on $\xi_{\gamma(t)}$ so that
\be\label{eq:tilde-L}
\widetilde L(t)(e_k(t)) = ( L e_k)(t)
\ee
for all $k = 1, \cdots, m_i$. This $\widetilde L(t)$ may neither be symmetric.
Since it satisfies
$\langle \kappa, \widetilde L c \rangle_{L^2} = \langle \kappa, L c \rangle_{L^2}$
for all $\kappa, \, c \in \ker(A^\pi_{(\lambda,J,\nabla)} - \mu_i \text{\rm Id})$,
we can replace $\widetilde L$ by its symmetrization $(\widetilde L + (\widetilde L)^T)/2$
if needed.

Writing $A = [dF(J)(B)]_\CB$, we have
$$
A_{ij} = \langle e_i, d_JF(B) e_j \rangle_{L^2}
=\int_{S^1} \left \langle e_i(t), (d_JF(B) e_j)(t)\right\rangle \, dt.
$$
\begin{lem}\label{lem:eigenfunction}
For any eigenfunction $\eta$ of $A_{(\lambda,J,\nabla)}$, we have
$$
\CL_{R_\lambda} \eta + \frac12 (\CL_{R_\lambda} J)\eta =  -\frac{\mu}{T} \eta -
\frac12 J (\CL_{R_\lambda}J) \eta.
$$
\end{lem}
\begin{proof} This is a direct consequence of \eqref{eq:A-in-CL2}.
\end{proof}

Therefore substituting this into \eqref{eq:dJFB}, we have
\beastar
d_JF(B) e_j & =  & -\frac{T}2\left(\text{\rm Id} + J\right) \CL_{R_\lambda}
Be_j  - T B \left(\CL_{R_\lambda} e_j
 + \frac12 (\CL_{R_\lambda} J) e_j \right) \\
 & = & \left( -\frac{T}2\left(\text{\rm Id} + J\right)( \CL_{R_\lambda}
B) + T B \left(\frac{\mu}{T}\, \text{\rm Id} +  \frac12 J(\CL_{R_\lambda}J)\right) \right) e_j.
\eeastar
We write the last operator in the big parenthesis as
$$
M \CL_{R_\lambda}B + B N
$$
where 
$$
M = -\frac{T}2\left(\text{\rm Id} + J\right), \quad 
N = - T(\frac{\mu}{T}\, \text{\rm Id} + \frac12 J \CL_{R_\lambda}J)
$$
and note that $M$ is an invertible map pointwise: The inverse of $M$ is given by
\be\label{eq:M-inverse}
M^{-1} = -\frac{1}{T}(\text{\rm Id} - J).
\ee
(Recall nondegeneracy of Reeb orbits implies $T \neq 0$.)

After pulling back the operator by $\gamma$,
we can write the pull-back operator as
$$
M \nabla^\phi_t B + BN
$$
and defining the map $\nabla^\phi_t$ by
$$
\nabla^\phi_t B := \CL_{R_\lambda} \widetilde B|_{\gamma(t)}
$$
where $\widetilde B$ is a smooth extension of the map
$$
\gamma(t) \mapsto B_t
$$
to a neighborhood of $\Image \gamma$. By the definition of the Lie derivative,
this definition does not depend on the choice of extension $\widetilde B$.
It is straightforward to check that this is the induced connection of
the one on the vector bundle $\gamma^*\xi \to S^1$ defined, which we still denote by
$\nabla^\phi_t$ as follows.
The following lemma is standard and so its proof is omitted.

\begin{lem} Let $(\gamma,T)$ be the given isospeed Reeb orbit.
Consider the pull-back vector bundle $\gamma^*\xi \to S^1$.
Define the operator $\nabla^\phi_t: \Gamma(\gamma^*\xi) \to
\Gamma(\gamma^*\xi)$ to be the one defined by
$$
\nabla^\phi_t(\eta): = T \CL_{R_\lambda}Y|_{\gamma(t)}
$$
for any (locally defined) vector field $Y$ tangent to $\xi$
defined near the point $\gamma(t)$
such that $Y(\gamma(t)) = \eta(t)$. Then the map is well-defined
and defines a connection on the vector bundle $\gamma^*\xi \to S^1$.
\end{lem}

Therefore we can express it as
$$
\nabla^\phi_t = \nabla_t + C_\phi
$$
where $\nabla_t$ is the pull-back connection of the Hermitian
 connection on $\xi \to Q$ induced by the contact triad connection
 on the triad $(Q,\lambda,J)$, and $C_\phi$ is a zero order operator
 along $\gamma$. Then we can rewrite $d_JF(B)$ into
$$
M \nabla_t B + M C_\phi B + BN.
$$
Furthermore we also have $\pi \nabla|_\xi = \pi \nabla^{\text{\rm LC}}|_\xi$.
(See \cite[Section 6]{oh-wang:connection}, more precisely see its arXiv version
1212.4817(v2).)

The following lemma is a crucial ingredient in our proof that enables us to
use a priori much smaller set of variations arising from $J$-variations
of $A^\pi_{(\lambda,J,\nabla)}$ instead of the abstract variations
used in \cite[Section 3.2]{wendl:lecture} especially in
the proof of \cite[Lemma C.1]{wendl:lecture}.
\begin{lem}\label{lem:solving-for-B}
The following initial valued problem can be uniquely solved:
$$
\begin{cases}
M \nabla^{\text{\rm LC}}_t B + M C_\phi B + BN = L \\
B(0) = B_0
\end{cases}
$$
for any given smooth section $L$ of $\End(\gamma^*\xi)$.
\end{lem}
\begin{proof} We first recall from \eqref{eq:M-inverse} that
the coefficient matrix $M$ is invertible. Therefore the equation can be rewritten as
$$
 \nabla^{\text{\rm LC}}_t B +  C_\phi B + M^{-1}BN = M^{-1} L
 $$
 which is a inhomogeneous linear first-order ODE for the variable $B$. By conjugating $B$ by the 
 parallel transport map $\Pi_\gamma^t: T_{\gamma(0)}Q \to T_{\gamma(t)}Q$, we consider
 $$
 \bar B(t): = \Pi_\gamma^t  B(t) (\Pi_\gamma^t)^{-1}: T_{\gamma(0)}Q \to T_{\gamma(0)}Q.
 $$
 Then the equation is further transformed to
 $$
\frac{d \bar B}{dt} +  \bar C_\phi \bar B 
+ \bar M^{-1}\bar B \bar N = \bar M^{-1} \bar L
$$
where all `bar' operators are conjugates by the parallel transport as for $\bar B$.
This is the genuine first-order linear system of ODE valued in the vector space $T_{\gamma(0)}Q$.
   
Now we apply the standard argument of `variation of constant' for 
solving the initial valued problem of the inhomogeneous first-order linear ODE
 for  $\bar B$. By writing back
$B(t) =  (\Pi_\gamma^t)^{-1} \bar B(t) \Pi_\gamma^t$, we have uniquely solved the required equation,
 which finishes the proof.
\end{proof}

Applying this lemma by setting $L  = \widetilde L$, the operator given in \eqref{eq:tilde-L}, 
we have finished the proof of Lemma \ref{lem:wendl}.
\end{proof}

This proves that the map $J \mapsto A^\pi_{(\lambda,J,\nabla)}$
is a submersion from $\CJ_\lambda$ to the stratum 
$\Delta_{k;\gamma}^{-1}(0)$ along
$$
(\Delta _{k;\gamma}\circ A_{k,\gamma})^{-1}(0)
$$
for all $k$, which finishes the proof of Proposition \ref{prop:epimorphism}.
\end{proof}

Now we have finished the proof of Proposition \ref{prop:transversality}.

\appendix

\section{Contact triad connection and canonical connection}
\label{sec:connection}

Let $(Q,\xi)$ be a given contact manifold. When a contact form $\lambda$
is given, we have the projection
$\pi=\pi_\lambda$ from $TQ$ to $\xi$ associated to the decomposition
$$
TQ = \xi \oplus \R \langle R_\lambda \rangle.
$$
We denote by $\Pi=\Pi_\lambda: TQ \to TQ$
 the corresponding idempotent, i.e., the endomorphism of $TQ$ satisfying
${\Pi}^2 = \Pi$, $\Im \Pi = \xi$, $\ker \Pi = \R\langle R_\lambda\rangle$.

\subsection{Contact triads and triad connections}

Let  $(Q, \lambda, J)$ be a contact triad of dimension $2n+1$ for the contact manifold $(Q, \xi)$, and equip with it the contact triad metric
$g=g_\xi+\lambda\otimes\lambda$.
In \cite{oh-wang:connection}, Wang and the second-named author
 introduced the \emph{contact triad connection} associated to every contact triad $(Q, \lambda, J)$ with the contact triad metric and proved its existence and uniqueness and naturality.

\begin{thm}[Contact Triad Connection \cite{oh-wang:connection}]\label{thm:connection}
For every contact triad $(Q,\lambda,J)$, there exists a unique affine connection $\nabla$, called the contact triad connection,
 satisfying the following properties:
\begin{enumerate}
\item The connection $\nabla$ is  metric with respect to the contact triad metric, i.e., $\nabla g=0$;
\item The torsion tensor $T$ of $\nabla$ satisfies $T(R_\lambda, \cdot)=0$;
\item The covariant derivatives satisfy $\nabla_{R_\lambda} R_\lambda = 0$, and $\nabla_Y R_\lambda\in \xi$ for any $Y\in \xi$;
\item The projection $\nabla^\pi := \pi \nabla|_\xi$ defines a Hermitian connection of the vector bundle
$\xi \to M$ with Hermitian structure $(d\lambda|_\xi, J)$;
\item The $\xi$-projection of the torsion $T$, denoted by $T^\pi: = \pi T$ satisfies the following property:
\be\label{eq:TJYYxi}
T^\pi(JY,Y) = 0
\ee
for all $Y$ tangent to $\xi$;
\item For $Y\in \xi$, we have the following
$$
\del^\nabla_Y R_\lambda:= \frac12(\nabla_Y R_\lambda- J\nabla_{JY} R_\lambda)=0.
$$
\end{enumerate}
\end{thm}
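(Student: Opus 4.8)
The plan is to prove uniqueness by a difference-tensor computation and existence by writing $\nabla = \nabla^{\mathrm{LC}} + A$, where $\nabla^{\mathrm{LC}}$ is the Levi-Civita connection of the triad metric $g$, and then solving the resulting pointwise linear system for the $(2,1)$-tensor $A$ blockwise along $TQ = \xi \oplus \R\langle R_\lambda\rangle$. For uniqueness, suppose $\nabla$ and $\nabla'$ both satisfy (1)--(6) and set $S(X,Y) := \nabla_X Y - \nabla'_X Y$, a tensor. Condition (1) forces $S_X$ to be skew-symmetric relative to $g$, and the standard cyclic-summation argument (as in the derivation of the Koszul formula) then yields $2\,g(S(X,Y),Z) = g(\mathcal T(X,Y),Z) - g(\mathcal T(Y,Z),X) + g(\mathcal T(Z,X),Y)$, where $\mathcal T(X,Y) := S(X,Y) - S(Y,X) = T - T'$ is the difference of torsions; hence it suffices to show $\mathcal T \equiv 0$. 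Here (2) gives $\mathcal T(R_\lambda,\cdot) = 0$; (4) and (5) together characterize $\nabla^\pi$ and $(\nabla')^\pi$ as the \emph{same} object, namely the canonical (Chern-type) Hermitian connection of $(\xi, d\lambda|_\xi, J)$ --- the unique Hermitian connection whose torsion has vanishing $(1,1)$-part, in exact analogy with the canonical connection of an almost Hermitian manifold (\cite{gauduchon,kobayashi}; cf.\ Proposition \ref{prop:TJYY}) --- so that $\mathcal T^\pi \equiv 0$ on $\xi \times \xi$; and (1), (3), (6) pin down $S(Y, R_\lambda)$ for $Y \in \xi$, which via the skew-symmetry also controls the remaining $R_\lambda$-component of $S$ on $\xi \times \xi$. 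Assembling the blocks forces $S \equiv 0$.

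For existence I would construct $\nabla$ directly. Take $\nabla^\pi := \pi\nabla|_\xi$ to be the canonical Hermitian connection of $(\xi, d\lambda|_\xi, J)$, whose existence is the standard construction of correcting a Hermitian connection so as to kill its $(1,1)$-torsion (\cite{gauduchon}); then (4) and (5) hold by construction. Set $\nabla_{R_\lambda}R_\lambda := 0$; for $Y \in \xi$ set $\nabla_Y R_\lambda := \tfrac12(\CL_{R_\lambda}J)JY$, which is $\xi$-valued and anti-$J$-linear and hence realizes (3) and (6); for $Y \in \xi$ set $\nabla_{R_\lambda}Y := \CL_{R_\lambda}Y + \tfrac12(\CL_{R_\lambda}J)JY$, so that $T(R_\lambda, Y) = \nabla_{R_\lambda}Y - \nabla_Y R_\lambda - [R_\lambda, Y] = 0$, giving (2); and fix the $R_\lambda$-component of $\nabla_Y Z$ for $Y, Z \in \xi$ by $g(\nabla_Y Z, R_\lambda) := -g(Z, \nabla_Y R_\lambda)$, as metricity demands. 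Extending by $\R$-bilinearity and the Leibniz rule yields a genuine affine connection $\nabla$, and it then remains only to verify condition (1), i.e.\ full $g$-compatibility in the remaining mixed $\xi$/$R_\lambda$ slots. Once this is done, naturality under triad-preserving contactomorphisms follows formally from uniqueness, and smoothness of $\nabla$ is automatic, since $A = \nabla - \nabla^{\mathrm{LC}}$ is the unique pointwise-linear solution of a system with smooth coefficients.

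The step I expect to be the main obstacle is precisely that final compatibility check: one must confirm that (1)--(6) are neither over- nor under-determined, so that the separately prescribed blocks above genuinely assemble into one metric connection satisfying \emph{all six} axioms at once. Concretely, the one nontrivial identity to verify is that for $Y, Z \in \xi$ one has $R_\lambda\bigl(g_\xi(Y,Z)\bigr) = g_\xi(\nabla_{R_\lambda}Y, Z) + g_\xi(Y, \nabla_{R_\lambda}Z)$ with the prescribed $\nabla_{R_\lambda}$; expanding the left side via $\CL_{R_\lambda}(d\lambda) = d(\CL_{R_\lambda}\lambda) = 0$ and $\CL_{R_\lambda}(JZ) = (\CL_{R_\lambda}J)Z + J\CL_{R_\lambda}Z$, this reduces to the pointwise $g$-symmetry of $(\CL_{R_\lambda}J)J$ on $\xi$ together with the anticommutation $(\CL_{R_\lambda}J)J = -J(\CL_{R_\lambda}J)$, both recorded earlier (\cite[Lemma 6.2]{blair:book}). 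Checking in the same spirit that (5) does not clash with the Reeb-slot values forced by (2), (3), (6) --- which again comes down to these algebraic identities for $\CL_{R_\lambda}J$ --- completes the verification.
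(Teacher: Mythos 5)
You should first note that this paper does not actually prove Theorem \ref{thm:connection}: it is quoted from \cite{oh-wang:connection}, so your proposal can only be measured against that source and against the formulas the paper does record. Your blockwise scheme is close in spirit to the original axiomatic argument (and your prescriptions $\nabla_Y R_\lambda = \tfrac12(\CL_{R_\lambda}J)JY$ and $\nabla_{R_\lambda}Y = \CL_{R_\lambda}Y + \tfrac12(\CL_{R_\lambda}J)JY$ agree with \eqref{eq:nablaYRlambda} and \eqref{eq:nablaRlambdaY}), whereas the cited source can alternatively obtain existence by restricting the canonical connection of the almost Hermitian manifold $(Q\times\R,\widetilde g_\lambda,\widetilde J)$, which is the point of view recorded here in Proposition \ref{prop:canonical}. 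Your reduction of the final metricity check to the $g$-symmetry of $(\CL_{R_\lambda}J)J$ and its anticommutation with $J$ is the correct reduction.

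The genuine gap is in the uniqueness bookkeeping. First, axiom (5) constrains only $\xi$-direction derivatives, so (4)+(5) can characterize at most the partial connection $\{\nabla^\pi_Y\}_{Y\in\xi}$ (and even that requires checking that the Gauduchon--Kobayashi pointwise argument survives replacing the Lie bracket by $\pi[Y,Z]$ for the non-integrable distribution $\xi$); the operator $\nabla^\pi_{R_\lambda}$ is untouched by (5). Second, and more seriously, axioms (1),(3),(6) do \emph{not} pin down $S(Y,R_\lambda)$: (3) and (6) only say that $\Phi(Y):=\nabla_Y R_\lambda$ is a $\xi$-valued, anti-$J$-linear field, and (1) together with (2) determines only its $g$-symmetric part (namely $\tfrac12(\CL_{R_\lambda}J)J$, via $\CL_{R_\lambda}d\lambda=0$ and Blair's lemma); the $g$-skew part is unconstrained by the axioms you cite, so ``assembling the blocks'' does not yet force $S\equiv 0$. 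The missing step is precisely where the $R_\lambda$-direction content of (4) enters: by (2), $\nabla_{R_\lambda}Y=\nabla_Y R_\lambda+[R_\lambda,Y]$, and imposing $J$-linearity of $\nabla_{R_\lambda}$ on $\xi$ gives $J\nabla_Y R_\lambda-\nabla_{JY}R_\lambda=(\CL_{R_\lambda}J)Y$, which combined with (6) forces $\nabla_Y R_\lambda=\tfrac12(\CL_{R_\lambda}J)JY$ for \emph{any} connection satisfying the axioms. Only with this in hand does metricity give $\lambda(\nabla_Y Z)=-g(Z,\nabla_Y R_\lambda)$ identically for both connections, hence the vanishing of the $R_\lambda$-component of $\mathcal T$ on $\xi\times\xi$, after which your Koszul-type identity does yield $S\equiv 0$. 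So the architecture survives, but the key determination must be derived from (2)+(4)+(6) rather than attributed to (1),(3),(6), and the partial-connection version of the Chern-type uniqueness must be stated and checked rather than invoked by analogy.
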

From this theorem, we see that the contact triad connection $\nabla$ canonically induces
a Hermitian connection $\nabla^\pi$ for the Hermitian vector bundle $(\xi, J, g_\xi)$,
and we call it the \emph{contact Hermitian connection}. This connection is
used for the study of various a priori estimates of contact instantons starting 
from \cite{oh-wang:CR-map1}.

\begin{cor}\label{cor:connection}
Let $\nabla$ be the contact triad connection. Then
\begin{enumerate}
\item For any vector field $Y$ on $Q$,
\be\label{eq:nablaYRlambda}
\nabla_Y R_\lambda = \frac{1}{2}(\CL_{R_\lambda}J)JY;
\ee
\item $\lambda(T)=d\lambda$ as a two-form on $Q$.
\end{enumerate}
\end{cor}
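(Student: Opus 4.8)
The plan is to derive both identities directly from the six axioms of the contact triad connection in Theorem~\ref{thm:connection}, reducing part~(2) to part~(1). For part~(1), observe that $Y\mapsto \nabla_Y R_\lambda$ and $Y\mapsto \tfrac12(\CL_{R_\lambda}J)JY$ are both $C^\infty(Q)$-linear in $Y$, so it suffices to verify \eqref{eq:nablaYRlambda} pointwise on $\xi$ and on $R_\lambda$. On $R_\lambda$ it is immediate, since $\nabla_{R_\lambda}R_\lambda=0$ by axiom~(3) while $JR_\lambda=0$ annihilates the right-hand side; thus the whole content is the computation on $\xi$. Write $A(Y):=\nabla_Y R_\lambda$, which maps $\xi$ into $\xi$ by axiom~(3).

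For $Y\in\xi$ I would first note that $g(\nabla_{R_\lambda}Y,R_\lambda)=R_\lambda\,g(Y,R_\lambda)-g(Y,\nabla_{R_\lambda}R_\lambda)=0$ by metricity (axiom~(1)) and axiom~(3), so $\nabla_{R_\lambda}Y\in\xi$, hence $\nabla_{R_\lambda}Y=\nabla^\pi_{R_\lambda}Y$; likewise $\nabla_{R_\lambda}(JY)\in\xi$. The Hermitian property (axiom~(4)) then gives $\nabla_{R_\lambda}(JY)=J\nabla_{R_\lambda}Y$. Combining this with the torsion identity $[R_\lambda,Z]=\nabla_{R_\lambda}Z-\nabla_Z R_\lambda$ (axiom~(2)), the definition of the Lie derivative yields
\[
(\CL_{R_\lambda}J)(Y)=[R_\lambda,JY]-J[R_\lambda,Y]=J\nabla_Y R_\lambda-\nabla_{JY}R_\lambda=JA(Y)-A(JY).
\]
Axiom~(6), $\del^\nabla_Y R_\lambda=\tfrac12(\nabla_Y R_\lambda-J\nabla_{JY}R_\lambda)=0$, says $A(Y)=J\,A(JY)$, which (as $A(JY)\in\xi$) is equivalent to $A(JY)=-JA(Y)$; substituting gives $(\CL_{R_\lambda}J)(Y)=2JA(Y)$, i.e. $A(Y)=-\tfrac12 J(\CL_{R_\lambda}J)Y$. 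Finally, since $\CL_{R_\lambda}\lambda=0$, $\CL_{R_\lambda}R_\lambda=0$ and $\CL_{R_\lambda}\,\mathrm{Id}=0$, the idempotent $\Pi=\mathrm{Id}-R_\lambda\otimes\lambda$ satisfies $\CL_{R_\lambda}\Pi=0$; differentiating $J^2=-\Pi$ then gives $(\CL_{R_\lambda}J)J+J(\CL_{R_\lambda}J)=0$, so $-\tfrac12 J(\CL_{R_\lambda}J)Y=\tfrac12(\CL_{R_\lambda}J)JY$, which is \eqref{eq:nablaYRlambda}.

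For part~(2) I would compute $\lambda(T(X,Y))$ directly. Using $\lambda(Z)=g(R_\lambda,Z)$ and metricity, $\lambda(\nabla_X Y)=X(\lambda(Y))-g(\nabla_X R_\lambda,Y)$ and symmetrically for $\nabla_Y X$, whence
\[
\lambda(T(X,Y))=X(\lambda(Y))-Y(\lambda(X))-\lambda([X,Y])+g(\nabla_Y R_\lambda,X)-g(\nabla_X R_\lambda,Y)=d\lambda(X,Y)+\big(g(\nabla_Y R_\lambda,X)-g(\nabla_X R_\lambda,Y)\big).
\]
By part~(1), $g(\nabla_X R_\lambda,Y)=\tfrac12\,g\big((\CL_{R_\lambda}J)JX,Y\big)$, and since $(\CL_{R_\lambda}J)J$ is pointwise symmetric with respect to the triad metric (Lemma~6.2 of \cite{blair:book}) and has image in $\xi$, the parenthesized term vanishes identically, giving $\lambda(T)=d\lambda$.

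There is no deep obstacle here: once Theorem~\ref{thm:connection} is in hand, the argument is careful bookkeeping with the six axioms. The one input beyond those axioms is the pointwise symmetry of $(\CL_{R_\lambda}J)J$ used in part~(2), which genuinely relies on the \emph{compatibility} (not merely the adaptedness) of $(\lambda,J)$; I would cite \cite{blair:book}, or else derive it from the automatic symmetry of $\CL_{R_\lambda}g$ together with $\CL_{R_\lambda}(d\lambda)=0$ and the structure relation $g|_\xi=d\lambda(\cdot,J\cdot)$, which forces $d\lambda\big(X,(\CL_{R_\lambda}J)Y\big)$ to be symmetric in $(X,Y)$ and hence $(\CL_{R_\lambda}J)J$ (equivalently $\nabla_{(\cdot)}R_\lambda$ on $\xi$) to be $g$-self-adjoint.
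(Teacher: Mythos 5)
Your argument is correct, and it is the standard derivation: the paper itself states Corollary \ref{cor:connection} without proof, deferring to \cite{oh-wang:connection}, where the identity $\nabla_Y R_\lambda = \tfrac12(\CL_{R_\lambda}J)JY$ and $\lambda(T)=d\lambda$ are obtained from the axioms of Theorem \ref{thm:connection} in essentially the way you do (torsion axiom (2) plus $J$-linearity of $\nabla^\pi$ to express $\CL_{R_\lambda}J$ through $\nabla_{(\cdot)}R_\lambda$, then axiom (6) and the anticommutation $(\CL_{R_\lambda}J)J+J(\CL_{R_\lambda}J)=0$). The one external input you invoke for part (2), the pointwise $g$-symmetry of $(\CL_{R_\lambda}J)J$, is exactly the Blair Lemma 6.2 \cite{blair:book} that the paper itself quotes in the proof of Proposition \ref{prop:A}, so your proof is complete as written.
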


We refer readers to \cite{oh-wang:connection} for more discussion on the contact triad connection and its relation with other related canonical type connections.

\subsection{Almost Hermitian manifolds and canonical connections}

Next we give the definition of canonical connection on general almost
Hermitian manifolds and apply it to the case of $\mathfrak{lcs}$-fication of
contact triad $(Q,\lambda,J)$. See \cite[Chapter 7]{oh:book1} for the exposition of
canonical connection for almost Hermitian manifolds and its relationship with the Levi-Civita connection,
and in relation to the study of pseudoholomorphic curves on general symplectic manifolds
emphasizing the Weitzenb\"ock formulae in the study of elliptic regularity in the same spirit of the present survey.

Let $(M,J)$ be any almost complex manifold.
\begin{defn} A metric $g$ on $(M,J)$ is called Hermitian, if $g$ satisfies
$$
g(Ju,Jv) = g(u,v), \quad u, \, v \in T_x M, \, x \in M.
$$
We call the triple $(M,J,g)$ an almost Hermitian manifold.
\end{defn}

For any given almost Hermitian manifold $(M,J,g)$, the bilinear form
$$
\Phi : = g(J \cdot, \cdot)
$$
is called the fundamental two-form in \cite{kobayashi-nomizu2}, which
is nondegenerate.
\begin{defn} An almost Hermitian manifold $(M,J,g)$ is an \emph{almost
K\"ahler manifold} if the two-form $\Phi$ above is closed.
\end{defn}

\begin{defn}
A (almost) Hermitian connection $\nabla$ is an affine connection
satisfying
$$
\nabla g = 0 = \nabla J.
$$
\end{defn}
Existence of such a connection is easy to check.
In general the torsion $T = T_\nabla$ of the almost Hermitian connection $\nabla$ is not zero, even when
$J$ is integrable.  The following is the almost complex version of the Chern connection in complex
geometry.

\begin{thm}[\cite{gauduchon}, \cite{kobayashi}]
On any almost Hermitian manifold $(M,J,g)$,
there exists a unique Hermitian connection $\nabla$ on $TM$ satisfying
\be\label{eq:canonical-nabla}
T(X,JX) = 0
\ee
for all $X \in TM$.
\end{thm}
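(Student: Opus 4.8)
The plan is to reduce the statement to a fibrewise linear-algebra problem, following the strategy that produces the Levi--Civita connection. First I would fix one reference Hermitian connection; a convenient global choice is
\[
\nabla^0_X Y := D_X Y - \tfrac12 J\,(D_X J)Y,
\]
where $D$ is the Levi--Civita connection of $g$. Since $D_X J$ is $g$-skew-symmetric and anticommutes with $J$ (the latter because $D_X(J^2)=0$), the operator $J(D_X J)$ is again $g$-skew-symmetric, and a direct check gives $\nabla^0 g = 0$ and $\nabla^0 J = 0$; so $\nabla^0$ is Hermitian. Denote its torsion by $T^0$. Every Hermitian connection is then $\nabla = \nabla^0 + C$ with $C \in \Omega^1(M;\mathfrak u(TM))$ — i.e. each $C_X$ is $g$-skew-symmetric and commutes with $J$ — and $T_\nabla(X,Y) = T^0(X,Y) + (\delta C)(X,Y)$, where $(\delta C)(X,Y):= C_X Y - C_Y X$. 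Writing $\operatorname{def}(\beta)(X) := \beta(X,JX)$ and $\Psi := \operatorname{def}\circ\delta$ (so $\Psi(C)(X)= C_X(JX) - C_{JX}X$ and $\operatorname{Image}(\Psi)\subseteq\operatorname{Image}(\operatorname{def})$ automatically), the condition $T_\nabla(X,JX)\equiv 0$ becomes the linear equation $\Psi(C) = -\operatorname{def}(T^0)$. So everything reduces to showing that at each point $p$ the map $\Psi\colon V^*\otimes\mathfrak u(V)\to\operatorname{Image}(\operatorname{def})$, with $V := T_pM$ carrying the induced Hermitian data, is a bijection; the sought connection is then $\nabla := \nabla^0 + C$ with $C := -\Psi^{-1}(\operatorname{def}(T^0))$, a smooth tensor field because $\Psi$ varies smoothly with $p$.

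The bijectivity I would get from a dimension count plus injectivity. With $\dim_{\mathbb R}V = 2n$ one has $\dim_{\mathbb R}\!\big(V^*\otimes\mathfrak u(V)\big) = 2n\cdot n^2 = 2n^3$. On the other side, polarizing $\beta(X,JX)\equiv 0$ shows it is equivalent to $(X,Y)\mapsto\beta(X,JY)$ being antisymmetric, which forces the $2$-form part of $\beta$ into type $(2,0)+(0,2)$; hence $\ker(\operatorname{def})$ has real dimension $n(n-1)\cdot 2n = 2n^3 - 2n^2$, and since $\dim_{\mathbb R}(\Lambda^2 V^*\otimes V) = \binom{2n}{2}\cdot 2n = 4n^3 - 2n^2$ we obtain $\dim_{\mathbb R}\operatorname{Image}(\operatorname{def}) = 2n^3$. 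As $\operatorname{Image}(\Psi)\subseteq\operatorname{Image}(\operatorname{def})$ and the source $V^*\otimes\mathfrak u(V)$ and the target $\operatorname{Image}(\operatorname{def})$ have equal dimension, injectivity of $\Psi$ already makes it onto $\operatorname{Image}(\operatorname{def})$; and $-\operatorname{def}(T^0)$ lies in $\operatorname{Image}(\operatorname{def})$ tautologically, so both existence and uniqueness drop out at once.

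The real content, and the step I expect to be the main obstacle, is therefore the injectivity of $\Psi$ — i.e. the uniqueness assertion: if $C\in V^*\otimes\mathfrak u(V)$ satisfies $C_X(JX)=C_{JX}X$ for all $X$, then $C=0$. I would argue by a Koszul-type manipulation. Setting $c(X,Y,Z):=g(C_X Y,Z)$, the hypotheses $C_X\in\mathfrak{so}(V)$ and $[C_X,J]=0$ read $c(X,Y,Z)=-c(X,Z,Y)$ and $c(X,JY,Z)=-c(X,Y,JZ)$, while polarizing $C_X(JX)=C_{JX}X$ in $X$ gives
\[
c(X,JW,Z)+c(W,JX,Z)=c(JX,W,Z)+c(JW,X,Z)\qquad\text{for all }X,W,Z.
\]
I would then feed the cyclic permutations of $(X,W,Z)$ into this identity and combine them using the two symmetry relations above so that the terms cancel in pairs, exactly as in the proof of the Koszul formula for the Levi--Civita connection, arriving at $c\equiv 0$. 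This is elementary but not purely mechanical: arranging the cancellations so that precisely $c$ survives is where the work lies. Once injectivity is established the proof is complete, and if a closed form is wanted, solving $\Psi(C)=-\operatorname{def}(T^0)$ explicitly expresses $C$ through the Nijenhuis tensor of $J$, recovering the classical formula for the Chern (canonical) connection.
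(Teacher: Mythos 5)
Your overall framework is fine, and it is in the same spirit as the sources the paper cites (the paper itself gives no proof of this theorem, referring to Gauduchon and Kobayashi): the reference connection $\nabla^0=D-\tfrac12 J(DJ)$ is indeed Hermitian, every Hermitian connection is $\nabla^0+C$ with $C_X\in\mathfrak{u}(T_pM)$, your identification of $\ker(\operatorname{def})$ with the $(2,0)+(0,2)$ part is correct, and the count $\dim\bigl(V^*\otimes\mathfrak{u}(V)\bigr)=\dim\operatorname{Image}(\operatorname{def})=2n^3$ is right, so everything does reduce to injectivity of $\Psi$. But that injectivity is exactly the uniqueness assertion of the theorem, and it is the one step you do not carry out: you announce a ``Koszul-type'' cancellation over cyclic permutations of $(X,W,Z)$ and concede that arranging it is ``where the work lies.'' As described, that mechanism is misdirected: your polarized identity is not cyclically symmetric in the three slots --- the third argument $Z$ is inert (it only records the $g$-pairing), while all the structure sits in the first two --- so signed sums over cyclic permutations do not isolate $c$. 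What closes the argument is $J$-substitution in the first slots, plus one further observation absent from your sketch.

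Concretely, in operator form the polarized hypothesis together with $C_XJ=JC_X$ reads $JC_XY+JC_YX=C_{JX}Y+C_{JY}X$ for all $X,Y$. Replace $X$ by $JX$ in this identity, apply $J$ to the result, and subtract it from the original: the terms involving $C_XY$ and $C_{JX}Y$ cancel and you are left with $JC_YX=C_{JY}X$ for all $X,Y$, i.e.\ $C_{JY}=JC_Y$ as endomorphisms. Now $C_{JY}$ is $g$-skew by hypothesis, whereas $JC_Y$ is $g$-symmetric (since $C_Y$ is skew and commutes with $J$, one has $g(JC_YA,B)=-g(C_YA,JB)=g(A,C_YJB)=g(A,JC_YB)$); an endomorphism that is both symmetric and skew vanishes, hence $C_{JY}=0$ for every $Y$ and $C\equiv 0$. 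Equivalently, complexify: the polarized identity says $C_XY=C_YX$ for $X\in V^{1,0}$, $Y\in V^{0,1}$, and since each $C_W$ preserves bidegree the two sides lie in $V^{0,1}$ and $V^{1,0}$ respectively, hence vanish; the pure-type components then vanish by skewness and nondegeneracy of the pairing between $V^{1,0}$ and $V^{0,1}$. Either of these short arguments supplies the missing crux (note both rest on an idea not present in your outline --- the symmetric-versus-skew dichotomy, or the bidegree decomposition --- rather than on pairwise cancellation \`a la Koszul); with it inserted, your existence-and-uniqueness proof is complete.
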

In complex geometry \cite{chern:connection} where $J$ is integrable, a Hermitian connection
satisfying \eqref{eq:canonical-nabla} is called the Chern connection.

\begin{defn}\label{defn:canonical-nabla} A \emph{canonical connection}
 of an almost Hermitian connection is
defined to be one that has the torsion property \eqref{eq:canonical-nabla}.
\end{defn}
The triple
$$
(Q \times \R, \widetilde J, \widetilde g_\lambda)
$$
is a natural example of an almost Hermitian manifold associated to the
contact triad $(Q,\lambda,J)$.

Let $\widetilde \nabla$ be the canonical
connection thereof. Then we have the following which also provides a natural
relationship between the contact triad connection and the canonical connection.

\begin{prop}[Canonical connection versus contact triad connection]\label{prop:canonical}
Let $\widetilde g = \widetilde g_\lambda $ be the almost Hermitan metric given above.
Let $\widetilde \nabla$ be the canonical connection of the almost Hermitian manifold
\eqref{eq:almost-Hermitian-triple}, and $\nabla$ be the contact triad connection for the triad $(Q,\lambda,J)$.
Then $\widetilde \nabla$ preserves the splitting \eqref{eq:TM-splitting}
and satisfies $\widetilde \nabla|_\xi = \nabla|_\xi$.
\end{prop}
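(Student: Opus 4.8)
The plan is to argue by uniqueness of the canonical connection. By the Gauduchon--Kobayashi theorem recalled in Appendix \ref{sec:connection}, $\widetilde\nabla$ is the \emph{unique} Hermitian connection on $(Q\times\R,\widetilde g_\lambda,\widetilde J)$ whose torsion $\widetilde T$ satisfies $\widetilde T(X,\widetilde JX)=0$ for all $X$. Hence it suffices to produce \emph{one} connection $\widehat\nabla$ on $TM$ which preserves the splitting \eqref{eq:TM-splitting}, restricts to $\nabla^\pi$ on $\xi$, and has the three defining properties of the canonical connection; uniqueness will then force $\widehat\nabla=\widetilde\nabla$, yielding both assertions of the Proposition at once.

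For the candidate I would take $\widehat\nabla:=\nabla^\pi\oplus\nabla_0$ relative to \eqref{eq:TM-splitting}, where $\nabla^\pi$ is the contact Hermitian connection of Theorem \ref{thm:connection}(4) pulled back to $Q\times\R$ (and extended so that $\frac{\del}{\del s}$-derivatives of $s$-independent sections vanish), and $\nabla_0$ is the flat connection on $\span\{\widetilde R_\lambda,\frac{\del}{\del s}\}$ making $\widetilde R_\lambda$ and $\frac{\del}{\del s}$ parallel. Then splitting-preservation and $\widehat\nabla|_\xi=\nabla|_\xi$ hold by construction, and $\widehat\nabla\widetilde g_\lambda=0$, $\widehat\nabla\widetilde J=0$ reduce to a block computation: $\nabla^\pi$ is Hermitian for $(d\lambda|_\xi,J)$, on the rank-two factor $\{\widetilde R_\lambda,\frac{\del}{\del s}\}$ is a $\widehat\nabla$-parallel $\widetilde g_\lambda$-orthonormal frame in which $J_0$ has constant coefficients, and there are no cross terms because \eqref{eq:TM-splitting} is both $\widetilde g_\lambda$-orthogonal and $\widetilde J$-invariant.

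The substantive step --- and the one I expect to be the main obstacle --- is the torsion normalization $\widehat T(X,\widetilde JX)=0$. Writing $X=X^\pi+a\widetilde R_\lambda+b\frac{\del}{\del s}$ with $X^\pi\in\xi$ and expanding bilinearly, one reduces to evaluating $\widehat T$ on the bundle-adapted pairs $\widehat T(Y,Z)$ and $\widehat T(Y,JY)$ for $Y,Z\in\Gamma(\xi)$, together with $\widehat T(Y,\widetilde R_\lambda)$, $\widehat T(Y,\frac{\del}{\del s})$ and $\widehat T(\widetilde R_\lambda,\frac{\del}{\del s})$; each of these I would compute straight from the definition of $\widehat\nabla$ and the structural facts already available --- $T(R_\lambda,\cdot)=0$ and $T^\pi(JY,Y)=0$ (Theorem \ref{thm:connection}(2),(5)), $\lambda(T)=d\lambda$ (Corollary \ref{cor:connection}(2)), $\nabla_YR_\lambda=\tfrac12(\CL_{R_\lambda}J)JY$ (Corollary \ref{cor:connection}(1)), $\CL_{R_\lambda}\lambda=0$ so the Reeb flow preserves $\xi$ and $[\widetilde R_\lambda,Y]\in\Gamma(\xi)$, $[\widetilde R_\lambda,\frac{\del}{\del s}]=0$, and the pointwise $g_\xi$-symmetry of $(\CL_{R_\lambda}J)J$ (Lemma 6.2 of \cite{blair:book}). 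The delicate part is the bookkeeping in the $\widetilde R_\lambda$-direction: the $\xi$-components of the torsion expressions vanish by Theorem \ref{thm:connection}(5), while one must check that the contributions along $\widetilde R_\lambda$ coming from the non-integrability of $\xi$ (through $\lambda([Y,JY])=-d\lambda(Y,JY)$) are exactly disposed of by the choice of $\nabla_0$ and the normalization $\lambda(T)=d\lambda$, simultaneously in every direction. Once $\widehat T(X,\widetilde JX)=0$ is checked, uniqueness of the canonical connection gives $\widehat\nabla=\widetilde\nabla$, hence $\widetilde\nabla$ preserves \eqref{eq:TM-splitting} and $\widetilde\nabla|_\xi=\nabla^\pi=\nabla|_\xi$; alternatively one could bypass the candidate by computing $\widetilde\nabla$ from $\nabla^{\mathrm{LC}}$ via $\widetilde\nabla_XY=\nabla^{\mathrm{LC}}_XY-\tfrac12\widetilde J(\nabla^{\mathrm{LC}}_X\widetilde J)Y$, but that first requires $\nabla^{\mathrm{LC}}$ of the triad metric in the decomposition and seems no shorter.
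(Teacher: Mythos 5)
Your overall scheme --- verify the defining properties of the canonical connection for the candidate $\widehat\nabla=\nabla^\pi\oplus\nabla_0$ and then invoke the uniqueness part of the Gauduchon--Kobayashi theorem --- is fine in its Hermitian half: $\widehat\nabla\widetilde g_\lambda=0$ and $\widehat\nabla\widetilde J=0$ do follow from the block description. But the step you yourself flag as the delicate one is not merely left unfinished; it fails. Take $0\neq Y\in\Gamma(\xi)$, lifted $s$-independently. Since $\widehat\nabla$ preserves \eqref{eq:TM-splitting}, both $\widehat\nabla_Y(JY)$ and $\widehat\nabla_{JY}Y$ lie in $\Gamma(\xi)$, while $[Y,JY]$ is tangent to the slices $Q\times\{s\}$ and has Reeb component $\lambda([Y,JY])=-d\lambda(Y,JY)$. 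Hence
\[
\widehat T(Y,\widetilde JY)\;=\;T^\pi(Y,JY)+d\lambda(Y,JY)\,R_\lambda\;=\;|Y|^2_{g_\lambda}\,R_\lambda\;\neq\;0,
\]
by Theorem \ref{thm:connection}(5) and \eqref{eq:triad-metric}. The ingredients you hoped would cancel this cannot do so: the identity $\lambda(T)=d\lambda$ of Corollary \ref{cor:connection} says precisely that the triad torsion \emph{has} this nonvanishing Reeb component (it restates the difficulty rather than removing it), and the choice of $\nabla_0$ only enters through derivatives \emph{of} $R_\lambda$ and $\frac{\del}{\del s}$, which do not occur in $\widehat T(Y,JY)$. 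So the candidate violates the normalization $T(X,\widetilde JX)=0$ of Definition \ref{defn:canonical-nabla}, and uniqueness cannot be invoked.

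The obstruction is structural, not a defect of your particular candidate: for \emph{any} connection preserving \eqref{eq:TM-splitting}, the $\span\{R_\lambda,\frac{\del}{\del s}\}$-component of $T(Y,Z)$ with $Y,Z\in\Gamma(\xi)$ is forced to equal $d\lambda(Y,Z)R_\lambda$, which is nonzero on the pair $(Y,JY)$ by the contact condition. Hence no splitting-preserving connection satisfies $T(X,\widetilde JX)=0$, so the strategy ``exhibit a splitting-preserving connection with the canonical connection's defining properties and conclude by uniqueness'' cannot be repaired by adjusting $\nabla_0$ or the $\xi$-block; if it worked it would prove strictly more than is possible. A genuine proof of the comparison has to compute $\widetilde\nabla$ itself (for instance from the Levi-Civita connection of $\widetilde g_\lambda$) and compare $\xi$-components, with the unavoidable $d\lambda$-torsion then appearing in tensorial off-diagonal terms of $\widetilde\nabla$ (mapping $\Gamma(\xi)$ into the $\span\{R_\lambda,\frac{\del}{\del s}\}$ summand) rather than vanishing, the assertion $\widetilde\nabla|_\xi=\nabla|_\xi$ being about the diagonal $\xi$-block $\pi\widetilde\nabla_X Y$ for $Y\in\Gamma(\xi)$. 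Note finally that the fallback formula you mention, $\widetilde\nabla=\nabla^{\text{\rm LC}}-\tfrac12\widetilde J(\nabla^{\text{\rm LC}}\widetilde J)$, is Gauduchon's \emph{first} canonical connection, which is guaranteed to satisfy the torsion normalization of Definition \ref{defn:canonical-nabla} only under extra hypotheses (e.g.\ almost K\"ahler); here the fundamental two-form $d\lambda+ds\wedge\lambda$ of \eqref{eq:almost-Hermitian-triple} is not closed, so that shortcut also requires justification before it can be used.
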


\section{Covariant differential of vector-valued forms}\label{sec:covariant-differential}

In this appendix, we recall the standard exterior calculus of vector-valued forms
borrowing from the exposition in \cite[Appendix]{oh-wang:CR-map1}.

Assume $(P, h)$ is a Riemannian manifold of dimension $n$ with metric $h$, and $D$ is the Levi--Civita connection.
Let $E\to P$ be any vector bundle with inner product $\langle\cdot, \cdot\rangle$,
and assume $\nabla$ is a connection on $E$ which is compatible with $\langle\cdot, \cdot\rangle$.

Denote by $\Omega^k(E)$ the space of $E$-valued $k$-forms on $P$. The connection $\nabla$
induces an exterior derivative by
\beastar
d^\nabla&:& \Omega^k(E)\to \Omega^{k+1}(E)\\
d^\nabla(\alpha\otimes \zeta)&=&d\alpha\otimes \zeta+(-1)^k\alpha\wedge \nabla\zeta.
\eeastar
It is not hard to check that for any $1$-forms $\beta$, equivalently one can write
$$
d^\nabla\beta (v_1, v_2)=(\nabla_{v_1}\beta)(v_2)-(\nabla_{v_2}\beta)(v_1),
$$
where $v_1, v_2\in TP$. 

For the purpose of the present paper, we mostly apply the last formula to the $w^*TQ$ 
or $w^*\xi$ one-forms. Some illustrations thereof are now in order.

For any given map $w: \dot \Sigma \to Q$, not necessarily arising from the symplectization,
we can decompose its derivative
$dw$, regarded as a $w^*TQ$-valued one-form on $\dot \Sigma$, into
\be\label{eq:du}
dw = d^\pi w + w^*\lambda \otimes R_\lambda
\ee
 where $d^\pi w := \pi dw$. Furthermore $d^\pi w$ is decomposed into
\be\label{eq:dpiu}
d^\pi w = \delbar^\pi w + \del^\pi w
\ee
where $\delbar^\pi w: = (dw^\pi)_J^{(0,1)}$ (resp. $\del^\pi w: = (dw^\pi)_J^{(1,0)}$) is
the anti-complex linear part (resp. the complex linear part) of $d^\pi w: (T\dot \Sigma, j) \to (\xi,J|_\xi)$.
(For the simplicity of notation, we will abuse our notation by often denoting $J|_\xi$ by $J$.
We also simply write $((\cdot)^\pi)_J^{(0,1)} = (\cdot)^{\pi(0,1)}$ and
$((\cdot)^\pi)_J^{(1,0)} = (\cdot)^{\pi(1,0)}$ in general.)

Here are some examples of vector-valued forms that appear in the main text of the present paper:
\begin{enumerate}
\item The vector fields $Y$ (resp. $Y^\pi$) along the map $w$ are 
$w^*TQ$-valued (resp. $w^*\xi$-valued) zero-form.
\item The covariant derivative $\nabla Y$ is a $w^*TQ$-valued one-form.
\item $\nabla^\pi Y^\pi$ is a $w^*\xi$-valued one-form.
\item We regard $dw$ (resp. $d^\pi w$) as $w^*TQ$-valued (resp. $w^*\xi$-valued) one-form.
\end{enumerate}

\section{Subsequence convergence and charge vanishing}
\label{sec:charge-vanishing}

In this section, we recall subsequence and charge vanishing result on contact instantons from
\cite{oh-wang:CR-map1}, \cite{oh-yso:index}.

We put the following hypotheses in our asymptotic study of the finite
energy contact instanton maps $w$ as in \cite{oh-wang:CR-map1}:
\begin{hypo}\label{hypo:basic-intro}
Let $h$ be the metric on $\dot \Sigma$ given above.
Assume $w:\dot\Sigma\to M$ satisfies the contact instanton equations \eqref{eq:contacton-Legendrian-bdy-intro},
and
\begin{enumerate}
\item $E^\pi_{(\lambda,J;\dot\Sigma,h)}(w)<\infty$ (finite $\pi$-energy);
\item $\|d w\|_{C^0(\dot\Sigma)} <\infty$.
\item $\Image w \subset \mathsf K\subset M$ for some compact subset $\mathsf K$.
\end{enumerate}
\end{hypo}

The above finite $\pi$-energy and $C^0$ bound hypotheses imply
\be\label{eq:hypo-basic-pt}
\int_{[0, \infty)\times [0,1]}|d^\pi w|^2 \, d\tau \, dt <\infty, \quad \|d w\|_{C^0([0, \infty)\times [0,1])}<\infty
\ee
in these coordinates.

\begin{defn}[Asymptotic action and charge]
Assume that the limit of $w(\tau, \cdot)$ as $\tau \to \infty$ exists.
Then we can associate two
natural asymptotic invariants at each puncture defined as
\bea
T & := & \lim_{r \to \infty} \int_{[0,1]} w_r^*\lambda \label{eq:TQ-T}\\
- Q & : = & \lim_{r \to \infty} \int_{[0,1]} w_r^*(\lambda \circ j).\label{eq:TQ-Q}
\eea
where $w_r: [0,1]\to Q$ is the map defined by $w_r(t) := w(r,t)$.
(Here we only look at positive punctures. The case of negative punctures is similar.)
We call $T$ the \emph{asymptotic contact action}
and $Q$ the \emph{asymptotic contact charge} of the contact instanton $w$ at the given puncture.
\end{defn}

The following open string version
of subsequence convergence and charge vanishing result
is proved in \cite{oh:contacton-Legendrian-bdy}, \cite{oh-yso:index}.

\begin{thm}\label{thm:subsequence-open}
Let $w:[0, \infty)\times [0,1]\to M$ satisfy the contact instanton equations \eqref{eq:contacton-Legendrian-bdy-intro}
and converges $|\tau| \to \infty$.
Then for any sequence $s_k\to \infty$, there exists a subsequence, still denoted by $s_k$, and a
massless instanton $w_\infty(\tau,t)$ (i.e., $E^\pi(w_\infty) = 0$)
on the strip $\R \times [0,1]$  that satisfies the following:
\begin{enumerate}
\item $\delbar^\pi w_\infty = 0$ and
$$
\lim_{k\to \infty}w(s_k + \tau, t) = w_\infty(\tau,t)
$$
in the $C^l(K \times [0,1], M)$ sense for any $l$, where $K\subset [0,\infty)$ is an arbitrary compact set.
\item $w_\infty$ has vanishing asymptotic charge $Q = 0$ and satisfies $w_\infty(\tau,t)= \gamma(t)$
for some Reeb chord $\gamma$ is some Reeb chord joining $R_0$ and $R_1$ with period $T$ at each puncture.
\item $T \neq 0$ at each  puncture with the associated pair $(R,R')$ of boundary condition with $R \cap R'
= \emptyset$.
\end{enumerate}
\end{thm}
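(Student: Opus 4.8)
The plan is to follow the three-step scheme used for the closed-string case in \cite[Section 6]{oh-wang:CR-map1}: a compactness/bootstrap step producing a limiting massless instanton on the full strip, a Liouville-type classification of such limits with Legendrian boundary, and a final step ruling out the zero-period case. First I would translate, setting $w_k(\tau,t):=w(s_k+\tau,t)$; for large $k$ this is defined on $[-s_k,\infty)\times[0,1]$, solves the same system \eqref{eq:contacton-Legendrian-bdy-intro} with the same Legendrian boundary conditions on the two edges $t=0,1$, and inherits the uniform bounds $\|dw_k\|_{C^0}\le\|dw\|_{C^0}<\infty$ and $\Image w_k\subset\mathsf K$ from Hypothesis~\ref{hypo:basic-intro}. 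Feeding the $C^0$-bound into the higher-order a~priori elliptic estimates for contact instantons with Legendrian boundary from \cite{oh-wang:CR-map1,oh-yso:index}---which rest on the Fundamental Equation \eqref{eq:Laplacian-w}, its isothermal form \eqref{eq:fundamental-isothermal}, and the alternating bootstrap through the coupled system \eqref{eq:equation-for-zeta0}--\eqref{eq:equation-for-alpha}---gives uniform $C^l$-bounds on the $w_k$ over every compact subset of $\R\times[0,1]$. Arzel\`a--Ascoli and a diagonal argument then extract a subsequence converging in $C^l_{\mathrm{loc}}$ to a limit $w_\infty:\R\times[0,1]\to M$, which again solves \eqref{eq:contacton-Legendrian-bdy-intro} with the induced boundary conditions; this is assertion (1). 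Since $E^\pi_{(\lambda,J)}(w)=\tfrac12\int_{[0,\infty)\times[0,1]}|d^\pi w|^2<\infty$, the tail integrals $\int_{[s_k-R,\,s_k+R]\times[0,1]}|d^\pi w|^2$ tend to $0$ for each fixed $R$; combined with $C^l_{\mathrm{loc}}$-convergence this forces $\int_{[-R,R]\times[0,1]}|d^\pi w_\infty|^2=0$ for all $R$, so $d^\pi w_\infty\equiv 0$, i.e.\ $E^\pi(w_\infty)=0$.

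Next I would classify the limit. From $d^\pi w_\infty=0$ we get $dw_\infty=(w_\infty^*\lambda)\,R_\lambda$; writing $w_\infty^*\lambda=a\,d\tau+b\,dt$ with bounded $a,b:\R\times[0,1]\to\R$, the fact that $\partial_\tau w_\infty,\partial_t w_\infty$ lie in $\R\,R_\lambda$ together with $R_\lambda\rfloor d\lambda=0$ gives $d(w_\infty^*\lambda)=w_\infty^*d\lambda=0$, while $d(w_\infty^*\lambda\circ j)=0$ is the second equation of \eqref{eq:contacton-Legendrian-bdy-intro}; these two identities say exactly that $F:=b+\sqrt{-1}\,a$ is holomorphic on the strip. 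Along the two edges $w_\infty$ is valued in a Legendrian, on which $\lambda$ restricts to $0$, so $a$ vanishes on $t=0$ and $t=1$. Iterated Schwarz reflection across these two lines extends $F$ to a bounded entire holomorphic function, hence to a constant by Liouville; since $a=0$ on $t=0$ we obtain $a\equiv 0$ and $b\equiv T$ for a constant $T$. Therefore $\partial_\tau w_\infty\equiv 0$, so $w_\infty(\tau,t)=\gamma(t)$ with $\dot\gamma(t)=T\,R_\lambda(\gamma(t))$, $\int\gamma^*\lambda=T$, and (from the boundary condition) $\gamma(0)\in R$, $\gamma(1)\in R'$ for the pair of boundary Legendrians meeting at the puncture---that is, $(\gamma,T)$ is an iso-speed Reeb chord. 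Finally $w_\infty^*(\lambda\circ j)$ is a multiple of $d\tau$, so $-Q=\lim_r\int_{\{\tau=r\}}w_\infty^*(\lambda\circ j)=0$ because a $d\tau$-form has zero integral over a $t$-slice; this is assertion (2). Assertion (3) then follows from the nondegeneracy hypothesis on the Legendrian link (Definition~\ref{defn:nondegeneracy-links}), in particular from the pairwise disjointness $R\cap R'=\emptyset$ of the pair meeting at the puncture: if $T=0$ then $\gamma$ would be a constant curve at a point of $R\cap R'=\emptyset$, which is impossible, so $T\neq 0$ at every puncture.

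The step I expect to be the main obstacle is the first one---the uniform higher-derivative bounds on the translates $w_k$ \emph{up to the Legendrian boundary}. Contact instantons carry no clean energy identity bounding $\|dw\|_{C^0}$, which is why this bound is assumed outright in Hypothesis~\ref{hypo:basic-intro}, and the boundary estimates genuinely require the alternating-bootstrap analysis of the coupled system \eqref{eq:equation-for-zeta0}--\eqref{eq:equation-for-alpha} rather than a single elliptic inequality; I would quote these from \cite{oh-wang:CR-map1,oh-yso:index} rather than reprove them. By contrast the reflection/Liouville argument in the second step is elementary, though the orientation conventions (the precise action of $j$ on the strip, hence which of $b\pm\sqrt{-1}\,a$ is holomorphic) must be fixed before running it.
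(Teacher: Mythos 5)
Your proposal is correct and follows essentially the same route as the proof the paper relies on (which it does not reproduce but cites from \cite{oh:contacton-Legendrian-bdy,oh-yso:index}): translation plus the quoted boundary elliptic estimates and finite $\pi$-energy give a massless limit, the closedness of $w_\infty^*\lambda$ and of $w_\infty^*\lambda\circ j$ together with the Legendrian boundary condition and a reflection/Liouville argument force $a\equiv 0$, $b\equiv T$ (hence $Q=0$ and $w_\infty(\tau,t)=\gamma(t)$), and disjointness $R\cap R'=\emptyset$ rules out $T=0$. No gaps beyond the estimates you explicitly (and legitimately) quote.
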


\begin{cor}[Corollary 5.11 \cite{oh-yso:index}]
\label{cor:tangent-convergence}
Assume that the pair $(\lambda, \vec R)$ is nondegenerate
in the sense of Definition \ref{defn:nondegeneracy-links}.
Let $w: \dot \Sigma \to M$ satisfy the contact instanton equation \eqref{eq:contacton-Legendrian-bdy-intro} and Hypothesis \eqref{eq:hypo-basic-pt}.
Then on each strip-like end with strip-like coordinates $(\tau,t) \in [0,\infty) \times [0,1]$ near a puncture
\beastar
&&\lim_{s\to \infty}\left|\pi \frac{\del w}{\del\tau}(s+\tau, t)\right|=0, \quad
\lim_{s\to \infty}\left|\pi \frac{\del w}{\del t}(s+\tau, t)\right|=0\\
&&\lim_{s\to \infty}\lambda(\frac{\del w}{\del\tau})(s+\tau, t)=0, \quad
\lim_{s\to \infty}\lambda(\frac{\del w}{\del t})(s+\tau, t)=T
\eeastar
and
$$
\lim_{s\to \infty}|\nabla^l dw(s+\tau, t)|=0 \quad \text{for any}\quad l\geq 1.
$$
All the limits are uniform for $(\tau, t)$ in $K\times [0,1]$ with compact $K\subset \R$.
\end{cor}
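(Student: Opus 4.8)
The plan is to bootstrap the statement from the subsequence convergence of Theorem~\ref{thm:subsequence-open} via the elementary principle that a sequence converges to $L$ as soon as every subsequence has a further subsequence converging to $L$. The point I would exploit is that the five limiting values asserted in the corollary are the \emph{same} for every subsequential limit of the type produced by that theorem, so no uniqueness of the limiting Reeb chord is needed; in particular each asserted limit will follow from the mere \emph{existence} of a subsequential limit which is an iso-speed Reeb chord of the correct action.

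First I would fix strip-like coordinates $(\tau,t)\in[0,\infty)\times[0,1]$ near the puncture and reduce to a map $w:[0,\infty)\times[0,1]\to M$ solving the contact instanton equation together with Hypothesis~\ref{hypo:basic-intro}, so that $\int_{[s,\infty)\times[0,1]}|d^\pi w|^2\to 0$ as $s\to\infty$. Next I would record the identity
\[
d(w^*\lambda)=\tfrac12\,|d^\pi w|^2\,d\tau\wedge dt,
\]
which follows from $\delbar^\pi w=0$ (hence $\pi\tfrac{\del w}{\del\tau}=-J\pi\tfrac{\del w}{\del t}$) together with $d\lambda(R_\lambda,\cdot)=0$ and compatibility of $J$. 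Integrating this over $t$ and using $\lambda|_{TR_i}=0$ along the Legendrian boundary arcs shows that $\tau\mapsto\int_0^1\lambda(\tfrac{\del w}{\del t})(\tau,t)\,dt$ is nondecreasing with derivative $\tfrac12\int_0^1|d^\pi w|^2\,dt$, hence converges as $\tau\to\infty$ because the $\pi$-energy is finite; its limit is the asymptotic action $T$ of the puncture, which is nonzero by Theorem~\ref{thm:subsequence-open}(3). Combined with the charge vanishing $Q=0$ of Theorem~\ref{thm:subsequence-open}(2), this is all the structural input I would need.

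Then I would argue by contradiction. Fix a compact $K\subset\R$ and suppose one of the five uniform limits on $K\times[0,1]$ fails: there are $s_k\to\infty$ and points $(\tau_k,t_k)\in K\times[0,1]$, which we may take to converge to $(\tau_\infty,t_\infty)$, along which the relevant quantity stays a fixed distance from its claimed limit. By Theorem~\ref{thm:subsequence-open} a further subsequence of $w(s_k+\cdot,\cdot)$ converges in $C^l_{\mathrm{loc}}(\R\times[0,1],M)$ for every $l$ to a map $w_\infty(\tau,t)=\gamma(t)$ with $\gamma$ an iso-speed Reeb chord joining the two boundary Legendrians, $\dot\gamma=TR_\lambda(\gamma)$, and the \emph{same} action $T$ identified above. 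Since $\del w_\infty/\del\tau\equiv0$, the pull-back connection in the $\tau$-direction annihilates every section of $w_\infty^*TQ$, and $\nabla_t\dot\gamma=T^2\nabla_{R_\lambda}R_\lambda=0$ by axiom (3) of the contact triad connection (Theorem~\ref{thm:connection}); hence $\nabla^l(dw_\infty)=0$ for all $l\geq1$, while $dw_\infty=TR_\lambda(\gamma)\,dt$ gives $\pi\tfrac{\del w_\infty}{\del\tau}=\pi\tfrac{\del w_\infty}{\del t}=0$, $\lambda(\tfrac{\del w_\infty}{\del\tau})=0$, $\lambda(\tfrac{\del w_\infty}{\del t})=T$. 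Passing to the limit in the $C^l_{\mathrm{loc}}$-convergence evaluates each of the five quantities at $(\tau_\infty,t_\infty)$ to exactly its asserted value, contradicting the choice of $(s_k,\tau_k,t_k)$; since $K$ was arbitrary, this yields the uniform convergence on each $K\times[0,1]$.

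The genuinely substantial ingredient here is Theorem~\ref{thm:subsequence-open} itself, whose proof rests on the fundamental equation \eqref{eq:Laplacian-w} and the Weitzenb\"ock-type a priori estimates recalled in Section~\ref{sec:charge-vanishing} and in \cite{oh-wang:CR-map1,oh-yso:index}; if one did not wish to quote it as a black box, the hard part would be establishing $C^0$-convergence of $d^\pi w\to0$ directly through those estimates and a bootstrap. Granting Theorem~\ref{thm:subsequence-open}, the only thing that really requires care is the bookkeeping that every subsequential limit is an iso-speed Reeb chord of the \emph{same} action $T$ — which is precisely the monotonicity-plus-finiteness remark above — so that the five limiting values $0,0,0,T,0$ are well defined independently of the extracted subsequence.
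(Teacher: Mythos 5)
Your proposal is correct and follows essentially the route the paper intends (and that the cited source \cite{oh-yso:index} takes): the corollary is deduced from the subsequence convergence and charge-vanishing statement of Theorem \ref{thm:subsequence-open}, combined with the observation that the asserted limiting values $0,0,0,T,0$ are independent of the extracted subsequence, the action $T$ being pinned down by the Stokes/monotonicity argument for $\tau\mapsto\int_0^1\lambda\bigl(\tfrac{\del w}{\del t}\bigr)\,dt$ using $d(w^*\lambda)=\tfrac12|d^\pi w|^2\,d\tau\wedge dt$ and the Legendrian boundary condition. Your handling of the uniform convergence on $K\times[0,1]$ by contradiction and of $\nabla^l dw_\infty=0$ via $\nabla_{R_\lambda}R_\lambda=0$ is exactly the standard bookkeeping needed, so no gap remains.
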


We also state that the same holds for closed string case for which
the charge $Q$ may not vanish.
The proof of the following subsequence convergence result
is proved in \cite[Theorem 6.4]{oh-wang:CR-map1}.
\begin{thm}[Subsequence Convergence,
Theorem 6.4 \cite{oh-wang:CR-map1}]
\label{thm:subsequence}
Let $w:[0, \infty)\times S^1 \to M$ satisfy the contact instanton 
equation \eqref{eq:contacton-Legendrian-bdy-intro}
and Hypothesis \eqref{eq:hypo-basic-pt}.
Then for any sequence $s_k\to \infty$, there exists a subsequence, still denoted by $s_k$, and a
massless instanton $w_\infty(\tau,t)$ (i.e., $E^\pi(w_\infty) = 0$)
on the cylinder $\R \times [0,1]$  that satisfies the following:
\begin{enumerate}
\item $\delbar^\pi w_\infty = 0$ and
$$
\lim_{k\to \infty}w(s_k + \tau, t) = w_\infty(\tau,t)
$$
in the $C^l(K \times [0,1], M)$ sense for any $l$, where $K\subset [0,\infty)$ is an arbitrary compact set.
\item $w_\infty^*\lambda = -Q\, d\tau + T\, dt$
\end{enumerate}
\end{thm}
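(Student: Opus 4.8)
The plan is to regard Theorem~\ref{thm:subsequence} as a standard ``convergence at a cylindrical end'' statement in the spirit of Hofer's compactness, adapted to the contact instanton system by means of the tensorial \emph{a priori} estimates of \cite{oh-wang:CR-map1}. First I would set $w_k(\tau,t):=w(s_k+\tau,t)$ on $[-s_k,\infty)\times S^1$. Since the contact instanton equations \eqref{eq:contact-instanton} are invariant under $\tau$-translation, each $w_k$ is again a contact instanton, and by Hypothesis~\ref{hypo:basic-intro} and \eqref{eq:hypo-basic-pt} the sequence has $\sup_k\|dw_k\|_{C^0}<\infty$ with all images contained in a fixed compact set $\mathsf K\subset M$. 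On every fixed subcylinder $[-N,N]\times S^1$ the maps $w_k$ with $s_k\geq N$ are honest solutions, so the problem reduces to a \emph{local} compactness statement together with a diagonal argument over the exhaustion $\bigcup_N[-N,N]\times S^1=\R\times S^1$.

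The second step is to upgrade the uniform $C^0$ bound on $dw_k$ to uniform $C^\ell_{\mathrm{loc}}$ bounds on $w_k$ for every $\ell$. This is where the estimates of \cite{oh-wang:CR-map1} enter: combining the Fundamental Equation \eqref{eq:Laplacian-w} with $\delbar^\pi w_k=0$ shows that $d^\pi w_k$ solves a first order elliptic system whose zeroth order coefficients are controlled by $\|dw_k\|_{C^0}$ and the fixed geometry of $(Q,\lambda,J)$, while the Reeb part satisfies the Hodge-type system $d(w_k^*\lambda)=\tfrac12|d^\pi w_k|^2\,d\tau\wedge dt$ and $d(w_k^*\lambda\circ j)=0$; iterating interior elliptic $C^{\ell,\alpha}$-estimates yields the desired uniform bounds on compact subcylinders. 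An Arzel\`a--Ascoli and diagonal argument then extracts a subsequence, still denoted $\{w_k\}$, with $w_k\to w_\infty$ in $C^\infty_{\mathrm{loc}}$ on $\R\times S^1$; letting $k\to\infty$ in the equations, $w_\infty$ is again a contact instanton, in particular $\delbar^\pi w_\infty=0$.

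For the remaining assertions I would argue cohomologically. Finite $\pi$-energy forces $\int_{[T,\infty)\times S^1}|d^\pi w|^2\to0$ as $T\to\infty$, hence $\int_{K\times S^1}|d^\pi w_k|^2\to0$ for every compact $K$, and by $C^\infty_{\mathrm{loc}}$-convergence $d^\pi w_\infty\equiv0$, i.e.\ $E^\pi(w_\infty)=0$. Therefore $dw_\infty=(w_\infty^*\lambda)\otimes R_\lambda$, so $w_\infty^*d\lambda=0$ and $w_\infty^*\lambda$ is closed; together with the closedness of $w_\infty^*\lambda\circ j$ (which passes to the limit) and the fact that $w_\infty^*\lambda$ has uniformly bounded pointwise norm, a de Rham plus Liouville-type argument on $\R\times S^1$ forces $w_\infty^*\lambda$ to have constant coefficients, say $w_\infty^*\lambda=a\,d\tau+b\,dt$. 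It remains to identify $a,b$: since $d(w^*\lambda\circ j)=0$, Stokes makes $\int_{\{r\}\times S^1}w^*(\lambda\circ j)$ independent of $r$ and equal to $-Q$, whereas $\frac{d}{dr}\int_{\{r\}\times S^1}w^*\lambda=\int_{S^1}\tfrac12|d^\pi w|^2\,dt\geq0$ is integrable in $r$ by finite $\pi$-energy, so $\int_{\{r\}\times S^1}w^*\lambda\to T$; passing these period integrals to $w_\infty$ gives $b=T$ and $a=-Q$, that is, $w_\infty^*\lambda=-Q\,d\tau+T\,dt$.

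The hard part is the second step, the uniform higher-order estimates for the translated sequence. The contact instanton system is elliptic only after coupling $\delbar^\pi w=0$ to $d(w^*\lambda\circ j)=0$, so the bootstrap must be run simultaneously for the $\xi$-component $d^\pi w$ and the Reeb component $w^*\lambda$, with the Fundamental Equation \eqref{eq:Laplacian-w} used to close the loop; this is precisely the technical core carried out in \cite{oh-wang:CR-map1}, and once it is available the rest is the routine limiting and cohomological argument sketched above.
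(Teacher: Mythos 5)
Your proposal is correct and follows essentially the same route as the proof this paper defers to, namely Theorem 6.4 of \cite{oh-wang:CR-map1}: translate by $s_k$, use the local elliptic a priori estimates for the coupled system $\delbar^\pi w=0$, $d(w^*\lambda\circ j)=0$ to get uniform $C^\infty_{\mathrm{loc}}$ bounds, extract a limit by Arzel\`a--Ascoli, kill $d^\pi w_\infty$ by finite $\pi$-energy, and then use closedness of both $w_\infty^*\lambda$ and $w_\infty^*\lambda\circ j$ plus a Liouville argument and the period integrals to identify $w_\infty^*\lambda=-Q\,d\tau+T\,dt$. The only point to tidy is the sign bookkeeping in the last step (it depends on the convention $j\partial_\tau=\partial_t$ used to define $\lambda\circ j$ and the charge $Q$), which is a convention matter rather than a gap.
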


We have the same exponential convergence as Corollary
\ref{cor:tangent-convergence} for the closed strong case, provided $Q = 0$.

\begin{cor}\label{cor:Q=0} Assume that $\lambda$ is nondegenerate.
Suppose that $w_\tau$ converges as $|\tau| \to \infty$  and its massless
limit instanton has  $Q = 0$ but $T \neq 0$, then the $w_\tau$
converges to a Reeb orbit of period $|T|$ exponentially fast.
\end{cor}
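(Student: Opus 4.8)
The plan is to carry out, for closed strings over $S^1$, the scheme recalled in Section~\ref{subsec:exponential-convergence} for the open-string (Legendrian boundary) case, using the vanishing of the charge to rule out drift. First I would identify the $C^0$-limit. By the standing hypothesis that $w_\tau:=w(\tau,\cdot)$ converges as $\tau\to+\infty$, together with Theorem~\ref{thm:subsequence} and $Q=0$: along any $s_k\to\infty$ a subsequence of $w(s_k+\cdot,\cdot)$ converges in $C^\infty_{\mathrm{loc}}$ to a massless instanton $w_\infty$ with $w_\infty^*\lambda=T\,dt$, and from $\delbar^\pi w_\infty=0$ and $E^\pi(w_\infty)=0$ one gets $\del_\tau w_\infty\equiv 0$ and $w_\infty(\tau,t)=\gamma(t)$ for an isospeed Reeb orbit $(\gamma,T)$ of action $T$, i.e.\ of period $|T|$; uniqueness of the $C^0$-limit forces $\gamma$ to be independent of the sequence. (Equivalently: subsequence convergence and $Q=0$ give $\del_\tau w\to 0$ in $C^0$ by the same elliptic bootstrap as in Corollary~\ref{cor:tangent-convergence} --- it is exactly $Q=0$ that removes any $d\tau$-drift --- so the $\omega$-limit set of the precompact path $\tau\mapsto w_\tau$ is connected, while nondegeneracy of $\lambda$ makes the isospeed Reeb orbits of action $T$ discrete, hence the $\omega$-limit is a single orbit.)

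Next I would upgrade this to $C^0$-\emph{exponential} decay by the spectral-gap argument on the linearized equation. Setting $\zeta(\tau,\cdot):=\pi\,\del_\tau w(\tau,\cdot)\in\Gamma(w_\tau^*\xi)$, the Fundamental Equation~\eqref{eq:fundamental-isothermal} in the cylindrical coordinates $(\tau,t)$ takes the form $\nabla^\pi_\tau\zeta=A^\tau_{(\lambda,J,\nabla)}\zeta$, where after parallel transport along short geodesics to $\gamma$ the operators $A^\tau_{(\lambda,J,\nabla)}$ converge to the asymptotic operator $A^\pi_{(\lambda,J,\nabla)}$ of $\gamma$ (Definition~\ref{defn:asymptotic-operator}). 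By Proposition~\ref{prop:A} this limit operator is self-adjoint, and since $\gamma$ is nondegenerate it has trivial kernel, hence a spectral gap around $0$. I would then consider $g(\tau):=\tfrac12\|\zeta(\tau,\cdot)\|_{L^2(S^1)}^2$ and, for $\tau$ large, absorb the error $A^\tau_{(\lambda,J,\nabla)}-A^\pi_{(\lambda,J,\nabla)}$ and the lower-order terms of \eqref{eq:fundamental-isothermal} to derive a differential inequality $g''(\tau)\ge c_0\,g(\tau)$ with $c_0>0$ governed by the gap; a standard comparison argument (the three-interval lemma) then yields $g(\tau)\le Ce^{-2\delta\tau}$ for any $0<\delta<|\mu_1|$, and together with the interior elliptic estimates of \cite{oh-wang:CR-map1} this gives $\|d(w(\tau,\cdot),\gamma(\cdot))\|_{C^0(S^1)}\le Ce^{-\delta\tau}$ (this is the closed-string case of \cite[Proposition~11.23]{oh-wang:CR-map2}).

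Finally I would run the alternating bootstrap of \cite{oh-yso:index}. In an isothermal coordinate $z=x+iy$ on a cylindrical end, put $\zeta:=d^\pi w(\del_x)$ and $\chi:=\lambda(\del_y w)+\sqrt{-1}\,\lambda(\del_x w)-\sqrt{-1}\,T$, so that $\chi\to 0$; then the Fundamental Equation~\eqref{eq:Laplacian-w} decouples into a $\zeta$-equation of the form $\nabla^\pi_x\zeta+J\nabla^\pi_y\zeta+(\text{zeroth order in }\zeta)=0$ and a $\delbar$-equation $\delbar\chi=\tfrac12|\zeta|^2$, the closed-string analogue of \eqref{eq:equation-for-zeta0}--\eqref{eq:equation-for-alpha} but with no boundary term. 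Feeding the $C^0$-exponential decay of $\zeta$ into the $\chi$-equation and alternating with the $C^{k,\alpha}$-estimates of \cite{oh-wang:CR-map1} for the CR operator propagates exponential decay to all higher norms of $d^\pi w$ and of $w^*\lambda-T\,dt$, hence of $dw-TR_\lambda(w)\,dt$; integrating in $\tau$ shows $w(\tau,\cdot)$ converges in $C^\infty$ exponentially fast to the isospeed Reeb orbit $(\gamma,T)$, i.e.\ to a Reeb orbit of period $|T|$.

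I expect the main obstacle to be the $C^0$-exponential decay step: one must set up the $L^2$ differential inequality honestly, controlling the $\tau$-dependent error $A^\tau_{(\lambda,J,\nabla)}-A^\pi_{(\lambda,J,\nabla)}$ and absorbing the nonlinear and lower-order terms of the Fundamental Equation, while a priori knowing only $C^0$-subconvergence and that $\gamma$ is nondegenerate. By comparison, the reduction to a single orbit and the subsequent alternating bootstrap are routine adaptations of the open-string arguments recalled in Section~\ref{subsec:exponential-convergence}.
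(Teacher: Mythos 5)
Your proposal is correct and follows essentially the route the paper itself takes: the corollary is obtained by combining the subsequence convergence of Theorem \ref{thm:subsequence} with $Q=0$ and nondegeneracy to identify the limit orbit, then invoking the $C^0$-exponential decay of \cite[Proposition 11.23]{oh-wang:CR-map2} (whose proof is exactly the spectral-gap $L^2$ differential inequality you outline) and the alternating bootstrap between $\zeta$ and $\chi$ recalled in Section \ref{subsec:exponential-convergence}, with the closed-string case requiring only the removal of the boundary condition. Your expansion of the $C^0$-exponential step and the $\chi\to 0$ normalization are consistent with the cited arguments, so no gap.
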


\section{Off-shell setting of the linearization operator}
\label{sec:off-shell}

We now provide some details of the Fredholm theory and the index calculation
referring readers to the original articles \cite{oh:contacton}, \cite{oh-savelyev} for 
complete details.

We fix an elongation function $\rho: \R \to [0,1]$
so that
\beastar
\rho(\tau) & = & \begin{cases} 1 \quad & \tau \geq 1 \\
0 \quad & \tau \leq 0
\end{cases} \\
0 & \leq & \rho'(\tau) \leq 2.
\eeastar
Then we consider sections of $w^*TQ$ by
\be\label{eq:barY}
\overline Y_i = \rho(\tau - R_0) R_\lambda(\gamma^+_i(t)),\quad
\underline Y_j = \rho(\tau + R_0) R_\lambda(\gamma^+_j(t))
\ee
and denote by $\Gamma_{s^+,s^-} \subset \Gamma(w^*TQ)$ the subspace defined by
$$
\Gamma_{s^+,s^-} = \bigoplus_{i=1}^{s^+} \R\{\overline Y_i\} \oplus \bigoplus_{j=1}^{s^-} \R\{\underline Y_j\}.
$$
Let $k \geq 2$ and $p > 2$.
The local model of the tangent space  of $\CW^{k,p}_\delta(\dot \Sigma, Q;J;\gamma^+,\gamma^-)$ at
$$
w \in C^\infty_\delta(\dot \Sigma,Q) \subset W^{k,p}_\delta(\dot \Sigma, Q)
$$
is given by
\be\label{eq:tangentspace}
\Gamma_{s^+,s^-} \oplus W^{k,p}_\delta(w^*TQ)
\ee
where $W^{k,p}_\delta(w^*TQ)$ is the Banach space
\beastar
&{} & \{Y = (Y^\pi, \lambda(Y)\, R_\lambda)
\mid e^{\frac{\delta}{p}|\tau|}Y^\pi \in W^{k,p}(\dot\Sigma, w^*\xi), \,
\lambda(Y) \in W^{k,p}(\dot \Sigma, \R)\}\\
& \cong & W^{k,p}(\dot \Sigma, \R) \cdot R_\lambda(w) \oplus W^{k,p}(\dot\Sigma, w^*\xi).
\eeastar
Here we measure the various norms in terms of the triad metric of the triad $(Q,\lambda,J)$.

We choose $\delta> 0$ so that $0 < \delta/p < 1$ is smaller than the
spectral gap
\be\label{eq:gap}
\text{gap}(\gamma^+,\gamma^-): = \min_{i,j}
\{d_{\text H}(\text{spec}A_{(T_i,z_i)},0),\, d_{\text H}(\text{spec}A_{(T_j,z_j)},0)\}.
\ee
 We denote by
$$
\CW^{k,p}_\delta(\dot \Sigma, Q;J;\gamma^+,\gamma^-), \quad k \geq 2
$$
the Banach manifold such that
$$
\lim_{\tau \to \infty}w((\tau,t)_i) = \gamma^+_i(T_i(t+t_i)), \quad
\lim_{\tau \to - \infty}w((\tau,t)_j) = \gamma^-_j(T_j(t-t_j))
$$
for some elements $t_i, \, t_j \in S^1$, where $(\tau,t)_i$ and $(\tau,t)_j$ are the cylindrical
coordinates at the punctures $i, \, j$ respectively and
$$
T_i = \int_{S^1} (\gamma^+_i)^*\lambda, \, T_j = \int_{S^1} (\gamma^-_j)^*\lambda.
$$
Here $t_i$ (resp. $t_j$) depends on the given analytic coordinate $(\tau,t)_i$
(resp. $(\tau,t)_j$) and the parametrization of
the Reeb orbits.

Now for each given $w \in \CW^{k,p}_\delta:= \CW^{k,p}_\delta(\dot \Sigma, Q;J;\gamma^+,\gamma^-)$,
we consider the Banach space
$$
\Omega^{(0,1)}_{k-1,p;\delta}(w^*\xi)
$$
the $W^{k-1,p}_\delta$-completion of $\Omega^{(0,1)}(w^*\xi)$ and form the bundle
$$
\CH^{(0,1)}_{k-1,p;\delta}(\xi) = \bigcup_{w \in \CW^{k,p}_\delta} \Omega^{(0,1)}_{k-1,p;\delta}(w^*\xi)
$$
over $\CW^{k,p}_\delta$. Then we can regard the assignment
$$
\Upsilon_1: (w,f) \mapsto \delbar^\pi w
$$
as a smooth section of the bundle $\CH^{(0,1)}_{k-1,p;\delta}(\xi) \to \CW^{k,p}_\delta$.

Similarly we consider
$$
\Omega^{(0,1)}_{k-1,p;\delta}(u^*\CV)
$$
the $W^{k-1,p}_\delta$-completion of $\Omega^{(0,1)}(u^*\CV)$ and form the bundle
$$
\CH^{(0,1)}_{k-1,p;\delta}(\CV) = \bigcup_{w \in \CW^{k,p}_\delta} \Omega^{(0,1)}_{k-1,p;\delta}(u^*\CV)
$$
over $\CW^{k,p}_\delta$. Then the assignment
$$
\Upsilon_2: u=(w,f) \mapsto w^*\lambda \circ j - f^*ds
$$
defines a smooth section of the bundle
$$
\CH^{(0,1)}_{k-1,p;\delta}(\CV) \to \CW^{k,p}_\delta.
$$
We have already computed the linearization of each of these maps in the previous section.

With these preparations, the following is a corollary of exponential estimates established
in \cite{oh-wang:CR-map1}.

\begin{prop}[Corollary 6.5 \cite{oh-wang:CR-map1}]\label{prop:on-containedin-off}
Assume $\lambda$ is nondegenerate.
Let $w:\dot \Sigma \to Q$ be a contact instanton and let $w^*\lambda = a_1\, d\tau + a_2\, dt$.
Suppose
\bea
\lim_{\tau \to \infty} a_{1,i} = -Q(p_i), &{}& \, \lim_{\tau \to \infty} a_{2,i} = T(p_i)\nonumber\\
\lim_{\tau \to -\infty} a_{1,j} = -Q(p_j) , &{}& \, \lim_{\tau \to -\infty} a_{2,j} = T(p_j)
\eea
at each puncture $p_i$ and $q_j$.
Then $w \in \CW^{k,p}_\delta(\dot \Sigma, Q;J;\gamma^+,\gamma^-)$.
\end{prop}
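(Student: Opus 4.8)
The plan is to unwind the definition of the weighted Banach manifold $\CW^{k,p}_\delta(\dot\Sigma,Q;J;\gamma^+,\gamma^-)$ and verify its two defining requirements — that $w(\tau,\cdot)$ converges at each puncture to the prescribed asymptotic Reeb orbit $\gamma^\pm_i$, and that the exponential--chart coordinate of $w$ around these orbits lies in $\Gamma_{s^+,s^-}\oplus W^{k,p}_\delta(w^*TQ)$ — directly from the exponential convergence package recalled in Section~\ref{subsec:exponential-convergence} and Appendix~\ref{sec:charge-vanishing}; the proposition is genuinely a corollary, the content being alignment of definitions rather than new analysis. First I would fix a puncture $p_i$ and pass to cylindrical coordinates $(\tau,t)\in[0,\infty)\times S^1$. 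Using the finite $\pi$-energy and $C^0$ hypotheses \eqref{eq:hypo-basic-pt}, Theorem~\ref{thm:subsequence} gives, for every $\tau_k\to\infty$, a subsequence along which $w(\tau_k+\cdot,\cdot)$ converges in $C^\infty_{\mathrm{loc}}$ to a massless instanton $w_\infty$ with $w_\infty^*\lambda=-Q(p_i)\,d\tau+T(p_i)\,dt$, the numbers $-Q(p_i),T(p_i)$ being exactly the prescribed limits of $a_{1,i},a_{2,i}$. Nondegeneracy of $\lambda$ then forces $w_\infty(\tau,t)=\gamma^+_i(T_i(t+t_i))$ for a unique closed Reeb orbit $\gamma^+_i$ of period $T_i=T(p_i)$ and a phase $t_i\in S^1$, and (as in Corollary~\ref{cor:Q=0}) the convergence upgrades from subsequential to honest convergence of $w(\tau,\cdot)$. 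This fixes the asymptotic datum $(\gamma^+_i)_i,(\gamma^-_j)_j$ entering the definition of $\CW^{k,p}_\delta$.

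Next I would promote $C^0$-convergence to $C^\infty$ exponential convergence with a rate strictly below the spectral gap $\mathrm{gap}(\gamma^+,\gamma^-)$ of \eqref{eq:gap}. This is precisely the asymptotic analysis recalled in Section~\ref{subsec:exponential-convergence}: $C^0$-exponential decay of $d\big(w(\tau,\cdot),\gamma^+_i(T_i(\cdot+t_i))\big)$ at any rate $0<\delta<|\mu_1|$, followed by the alternating bootstrap applied to the coupled system \eqref{eq:equation-for-zeta0}--\eqref{eq:equation-for-alpha} in its closed-string form, which upgrades this to $|\nabla^\ell(dw-TR_\lambda(w)\,dt)|\le C_\ell e^{-\delta|\tau|}$ for all $\ell\ge 0$, as recorded in Corollary~\ref{cor:tangent-convergence}. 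One then fixes $\delta>0$ with $\delta/p$ below $\mathrm{gap}(\gamma^+,\gamma^-)$, so the weight $e^{\delta|\tau|/p}$ does not overwhelm these bounds.

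Then — the bookkeeping step — I would write $w(\tau,t)=\exp_{\gamma^+_i(T_i(t+t_i))}(Y(\tau,t))$ for $\tau$ large, split $Y=Y^\pi+\lambda(Y)R_\lambda$ along $TQ=\xi\oplus\R R_\lambda$, and show $Y\in\Gamma_{s^+,s^-}\oplus W^{k,p}_\delta(w^*TQ)$. The $\xi$-component $Y^\pi$ is controlled directly: its $W^{k,p}$-norm over $[\tau,\infty)\times S^1$ is bounded by the exponential decay of $d^\pi w$ and of its $\nabla^\pi$-derivatives from the previous step, so $\|e^{\delta|\tau|/p}Y^\pi\|_{W^{k,p}(w^*\xi)}<\infty$, the relevant connection being the contact Hermitian connection $\nabla^\pi$ used in the definition of $W^{k,p}_\delta$. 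The Reeb-component scalar $\lambda(Y)$ is handled via $\partial_\tau\lambda(Y)=a_{1,i}\to-Q(p_i)$ and $\partial_t\lambda(Y)\sim a_{2,i}-T_i\to 0$: its leading term is absorbed into the finite-dimensional summand $\Gamma_{s^+,s^-}$ spanned by the elongation sections $\overline Y_i,\underline Y_j$ of \eqref{eq:barY}, with exponentially decaying remainder lying in $W^{k,p}_\delta$. Assembling the contributions over all positive and negative punctures gives $w\in\CW^{k,p}_\delta(\dot\Sigma,Q;J;\gamma^+,\gamma^-)$.

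I expect the main obstacle to be exactly this interface: transcribing the $C^\infty$-exponential estimates of \cite{oh-wang:CR-map1}, which are phrased in terms of $dw$ and its covariant derivatives, into a weighted Sobolev bound on the exponential-chart coordinate $Y$ — in particular tracking the Reeb-direction behaviour so that precisely the $\Gamma_{s^+,s^-}$-part is subtracted off, and keeping the phase shift $t_i$ consistent with the parametrization convention built into the definition of $\CW^{k,p}_\delta$. Once the definitions are lined up this is mechanical, which is why the statement is recorded as a corollary of the estimates of \cite{oh-wang:CR-map1} rather than proved from scratch.
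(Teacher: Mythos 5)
Before comparing, note that the paper contains no proof of Proposition \ref{prop:on-containedin-off}: it is recorded as a quotation of Corollary 6.5 of \cite{oh-wang:CR-map1}, with the remark that it is "a corollary of exponential estimates established in \cite{oh-wang:CR-map1}". So there is no in-paper argument to check you against; what can be said is that the ingredients you assemble --- Theorem \ref{thm:subsequence}, the $C^0$- and $C^\infty$-exponential convergence recalled in Section \ref{subsec:exponential-convergence}, a weight $\delta/p$ chosen below the spectral gap \eqref{eq:gap}, and the final translation of the exponential decay of $dw - T R_\lambda(w)\,dt$ into membership of the exponential-chart coordinate in $\Gamma_{s^+,s^-}\oplus W^{k,p}_\delta(w^*TQ)$ --- are exactly the ones the paper intends the reader to combine, and your bookkeeping of the $\xi$- and Reeb-components against the definition \eqref{eq:tangentspace} is the right interface. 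It is also fine (indeed necessary) that you invoke the standing Hypothesis \ref{hypo:basic-intro}, which the statement of the proposition does not repeat.

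There is, however, one genuine jump: your treatment of the asymptotic charge. You assert that nondegeneracy of $\lambda$ forces the massless subsequential limit to be $w_\infty(\tau,t)=\gamma^+_i(T_i(t+t_i))$ and then upgrade to honest convergence "as in Corollary \ref{cor:Q=0}". Nondegeneracy does not kill the charge: Theorem \ref{thm:subsequence}(2) only gives $w_\infty^*\lambda=-Q\,d\tau+T\,dt$, and Corollary \ref{cor:Q=0} carries $Q=0$ as a separate hypothesis. If $Q(p_i)\neq 0$ the hypotheses of the proposition as literally stated can still hold (the spiraling cylinder $w(\tau,t)=\gamma(Tt-Q\tau)$ over a closed Reeb orbit is a contact instanton with $a_1\equiv -Q$, $a_2\equiv T$), yet then $\lambda(\del w/\del\tau)\to -Q(p_i)\neq 0$, the loops $w(\tau,\cdot)$ keep translating along the orbit, no $t$-parametrized limit exists, and the Reeb component of your exponential coordinate grows linearly, so it lies neither in $W^{k,p}$ nor in the bounded finite-dimensional summand $\Gamma_{s^+,s^-}$ of \eqref{eq:barY}; both your argument and the conclusion fail. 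So vanishing of the asymptotic charge at every puncture must either be imposed (as it implicitly is in the quoted Corollary 6.5, and as is automatic for curves in symplectization, where $w^*\lambda\circ j=df$ is exact so each loop integral defining $Q$ in \eqref{eq:TQ-Q} vanishes) or be established separately before you may route the convergence through Corollary \ref{cor:Q=0}. With that hypothesis made explicit, the rest of your outline is sound and follows the intended derivation from the exponential estimates of \cite{oh-wang:CR-map1}.
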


Now we are ready to describe the moduli space of pseudoholomorphic curves on symplectization
with prescribed asymptotic condition as the zero set
\be\label{eq:defn-MM}
\CM(\dot \Sigma, Q;J;\gamma^+,\gamma^-) = \left(\CW^{k,p}_\delta(\dot \Sigma, Q;J;\gamma^+,\gamma^-)
\oplus \CW^{k,p}_\delta(\dot \Sigma, \R)\right) \cap \Upsilon^{-1}(0)
\ee
whose definition does not depend on the choice of $k, \, p$ or $\delta$ as long as $k\geq 2, \, p>2$ and
$\delta > 0$ is sufficiently small. One can also vary $\lambda$ and $J$ and define the universal
moduli space whose detailed discussion is postponed.
We see therefrom that
$D\Upsilon_{(\lambda,J)}$ is the first-order differential operator whose first-order part
is given by the direct sum operator
$$
(Y^\pi,(\kappa, b)) \mapsto \delbar^{\nabla^\pi} Y^\pi \oplus (d\kappa \circ j - db)
$$
where we write $(Y,v) = \left(Y^\pi + \kappa R_\lambda, b \frac{\del}{\del s}\right)$
for $\kappa = \lambda(Y), \, b = ds(v)$.
Here we have
$$
\delbar^{\nabla^\pi} : \Omega^0_{k,p;\delta}(w^*\xi;J;\gamma^+,\gamma^-) \to
\Omega^{(0,1)}_{k-1,p;\delta}(w^*\xi)
$$
and the second summand can be written as the standard Cauchy-Riemann operator
$$
\delbar: W^{k,p}(\dot \Sigma;\C) \to \Omega^{(0,1)}_{k-1,p}(\dot \Sigma,\C); \quad
b + i \kappa =: \varphi \mapsto
\delbar \varphi.
$$
The following proposition can be derived from the arguments used by
Lockhart and McOwen \cite{lockhart-mcowen}. 

\begin{prop}\label{prop:fredholm}
Suppose $\delta > 0$ satisfies the inequality
$$
0< \delta < \min\left\{\frac{\text{\rm gap}(\gamma^+,\gamma^-)}{p}, \frac{2}{p}\right\}
$$
where $\text{\rm gap}(\gamma^+,\gamma^-)$ is the spectral gap, given in \eqref{eq:gap},
of the asymptotic operators $A_{(T_j,z_j)}$ or $A_{(T_i,z_i)}$
associated to the corresponding punctures. Then the operator
\eqref{eq:DUpsilonu} is Fredholm.
\end{prop}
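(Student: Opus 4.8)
The plan is to reduce the Fredholm property of the operator \eqref{eq:DUpsilonu} to the model translation-invariant situation on a cylinder near each puncture and then to invoke the Lockhart--McOwen theory \cite{lockhart-mcowen} of elliptic operators on manifolds with cylindrical ends. First I would use the exponential $C^\infty$-convergence $w_\tau \to \gamma_\pm$ recalled in Corollary \ref{cor:Q=0} and Theorem \ref{thm:subsequence}, together with the explicit linearization formula of Theorem \ref{thm:linearization} and the exponential decay of the off-diagonal terms of \eqref{eq:matrix-form} established in Proposition \ref{prop:off-diagonal}, to write \eqref{eq:DUpsilonu} on each cylindrical end $[0,\infty)\times S^1$ (respectively $(-\infty,0]\times S^1$) --- after trivializing $u^*T(Q\times\R) = w^*\xi \oplus \underline{\C}$ by the parallel transports $\Pi^\infty_\tau$ along the short geodesics from $w(\tau,\cdot)$ to $\gamma_\pm$ --- in the form
$$
\tfrac12\bigl(\nabla_\tau - A^\tau_{(\lambda,J,\nabla)}\bigr)\oplus \tfrac12\bigl(\del_\tau + i\del_t\bigr) + E(\tau),
$$
where $A^\tau$ is the family from \eqref{eq:Atau} and $E(\tau)$ collects the off-diagonal terms together with the error produced by the $\tau$-dependent trivialization, so that $\|E(\tau)\|\to 0$ exponentially in the operator norm of the relevant weighted spaces as $|\tau|\to\infty$. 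By Definition \ref{defn:asymptotic-operator} and \eqref{eq:Aperp}, the conjugated family converges exponentially fast to the constant-coefficient operator $\tfrac12(\del_\tau - A)$ with $A := A^\pi_{(\lambda,J,\nabla)} \oplus A^\perp_{(\lambda,J,\nabla)}$ acting on $\Gamma(\gamma_\pm^*\xi \oplus \C)$, so at each puncture \eqref{eq:DUpsilonu} is an exponentially small perturbation of $\del_\tau - A$.

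Next I would check the hypotheses of the Lockhart--McOwen Fredholm criterion for the weight $\delta$. The operator $A$ is a first-order, formally self-adjoint, elliptic operator on $S^1$: self-adjointness of $A^\pi_{(\lambda,J,\nabla)}$ with respect to the triad metric is Proposition \ref{prop:A}, and $A^\perp = -i\del_t$ is self-adjoint by inspection. Hence $A$ has discrete real spectrum without finite accumulation point and, by \cite[Theorem 6.29]{kato} (see the discussion around \eqref{eq:spectral-gap}), a uniform spectral gap. Since the existence of a positive $\delta$ with $\delta < \text{\rm gap}(\gamma^+,\gamma^-)/p$ already presupposes $\text{\rm gap}(\gamma^+,\gamma^-) > 0$ --- that is, by the definition \eqref{eq:gap} of the gap as the Hausdorff distance from $0$ to $\text{spec}(A^\pi_{(\lambda,J,\nabla)})$, that $0 \notin \text{spec}(A^\pi_{(\lambda,J,\nabla)})$ at every puncture, which is precisely the nondegeneracy of the asymptotic Reeb orbits --- the hypothesis on $\delta$ forces the weight exponent $\delta/p$ appearing in $e^{\frac{\delta}{p}|\tau|}$ to satisfy $0 < \delta/p < \text{dist}\bigl(0,\text{spec}(A^\pi_{(\lambda,J,\nabla)})\bigr)$ and, through the term $2/p$, also $0 < \delta/p$ below the first positive eigenvalue of $A^\perp = -i\del_t$. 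In particular $\delta/p \notin \text{spec}(A)$ simultaneously at every puncture.

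Given this, the conjugate of $\del_\tau - A$ by $e^{\frac{\delta}{p}\tau}$, namely $\del_\tau - (A - \delta/p)$, is invertible on $W^{k,p}(\R\times S^1)\to W^{k-1,p}(\R\times S^1)$ because $\delta/p\notin\text{spec}(A)$, and on a half-cylinder it is surjective with finite-dimensional kernel; patching these model parametrices at the ends with an interior parametrix over the compact core $K_\Sigma$ (using the interior ellipticity of the Cauchy--Riemann type operator \eqref{eq:DUpsilonu}), and absorbing the exponentially small errors $E(\tau)$ into the remainder, produces two-sided parametrices for \eqref{eq:DUpsilonu} modulo compact operators, exactly as in \cite{lockhart-mcowen}. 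This proves that \eqref{eq:DUpsilonu} is Fredholm. I expect the main obstacle to be the first step: verifying carefully that, after the trivialization by $\Pi^\infty_\tau$, the deviation of \eqref{eq:DUpsilonu} from the translation-invariant model decays exponentially in the operator norm between the weighted Sobolev spaces. This is where the exponential $C^\infty$-convergence estimates recalled earlier, the explicit linearization formula of Theorem \ref{thm:linearization}, and the exponential vanishing of the off-diagonal terms in Proposition \ref{prop:off-diagonal} are all essential; once this is in place the Fredholm conclusion is a routine application of Lockhart--McOwen, the homotopy \eqref{eq:continuous-path} to the block-diagonal form being needed only later for the index computation, not for Fredholmness itself.
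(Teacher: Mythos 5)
Your proposal is correct and is essentially the paper's own (implicit) approach: the paper offers no proof of Proposition \ref{prop:fredholm} beyond the remark that it can be derived from the arguments of Lockhart--McOwen \cite{lockhart-mcowen}, and that is exactly what you carry out. Your reduction at each end to the translation-invariant model $\del_\tau - A$ with $A = A^\pi_{(\lambda,J,\nabla)}\oplus A^\perp_{(\lambda,J,\nabla)}$ (using the exponential convergence results and Proposition \ref{prop:off-diagonal} to get exponentially decaying errors), the verification that the weight $\delta/p$ avoids the spectrum of $A$ under the stated hypothesis on $\delta$, and the parametrix patching over the compact core constitute the standard derivation the paper has in mind.
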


\def\cprime{$'$}
\providecommand{\bysame}{\leavevmode\hbox to3em{\hrulefill}\thinspace}
\providecommand{\MR}{\relax\ifhmode\unskip\space\fi MR }
\providecommand{\MRhref}[2]{%
  \href{http://www.ams.org/mathscinet-getitem?mr=#1}{#2}
}
\providecommand{\href}[2]{#2}


\begin{thebibliography}{ABW10}

\bibitem[Abb04]{abbas:chord}
C.~Abbas, \emph{The chord problem and a new method of filling by
  pseudoholomorphic curves}, Internat. Math. Res. (2004), no.~18, 913--927.

\bibitem[ABW10]{albers-bramham-wendl}
P.~Albers, B.~Bramham, and C.~Wendl, \emph{On non-separating contact
  hypersurfaces in symplectic 4-manifolds}, Algebraic \& Geometric Topology
  (2010), 697--737.

\bibitem[APS76]{atiyah-patodi-singer:spectral}
M.~F. Atiyah, V.K. Patodi, and I.~M. Singer, \emph{Spectral asymmetry and
  {R}iemanninan geometry iii}, Math. Proc. Cambridge Philos. Soc. \textbf{79}
  (1976), no.~1, 71--99.

\bibitem[Ban02]{banyaga:lcs}
A.~Banyaga, \emph{Some properties of locally conformal symplectic structures},
  Comm. Math. Helv. \textbf{77} (2002), 383--398.

\bibitem[BH23]{bao-honda:kuranishi}
Erkao Bao and Ko~Honda, \emph{Semi-global kuranishi charts and the definition
  of contact homology}, Adv. Math. \textbf{414} (2023), Paper No. 108864, 148
  pp.

\bibitem[Bla10]{blair:book}
David~E. Blair, \emph{Riemannian geometry of contact and symplectic manifolds},
  Progress in Mathematics, vol. 203, Birkh\"auser Boston, Ltd., Boston, MA,
  2010, Second edition.

\bibitem[Can22]{cant:thesis}
Dylan Cant, \emph{A dimension formula for relative symplectic field theory},
  2022, thesis, Stanford University.

\bibitem[Che]{chern:connection}
S.S. Chern, \emph{Complex manifolds without potential theory}, Van Nostrand
  Mathematical Studies.

\bibitem[FFN07]{FFN-07}
Paul Fendley, Matthew P.~A. Fisher, and Chetan Nayak, \emph{Topological
  entanglement entropy from the holographic partition function}, J. Stat. Phys.
  \textbf{126} (2007), no.~6, 1111--1144.

\bibitem[Flo88]{floer:unregularized}
Andreas Floer, \emph{The unregularized gradient flow of the symplectic action},
  Comm. Pure Appl. Math. \textbf{41} (1988), no.~6, 775--813.

\bibitem[Gau97]{gauduchon}
P.~Gauduchon, \emph{Hermitian connection and {D}irac operators}, Boll. Un.
  Math. Ital. B (7) \textbf{11} (1997), no.~2 suppl., 2587 --288.

\bibitem[GKZ94]{gelfand-kap-zel}
I.M. Gelfand, M.M. Kapranov, and A.V. Zelvinsky, \emph{Discriminants,
  {R}esultants, and {M}ultidimensional {D}eterminants}, Modern Birkh\"auser
  Classics, Birkh\"auser, Boston, 1994.

\bibitem[Hof93]{hofer:invent}
H.~Hofer, \emph{Pseudoholomorphic curves in symplectizations with applications
  to the {W}einstein conjecture in dimension three}, Invent. Math. \textbf{114}
  (1993), 515--563.

\bibitem[Hof00]{hofer:gafa}
\bysame, \emph{Holomorphic curves and real three-dimensional dynamics}, Geom.
  Func. Anal. (2000), 674--704, Special Volume, Part II.

\bibitem[HWZ95]{HWZ:embedding-control}
H.~Hofer, K.~Wysocki, and E.~Zehnder, \emph{Properties of pseudoholomorphic
  curves in symplectizations, {II}. {E}mbedding control and algebraic
  invariants}, Geom. Funct. Anal. \textbf{5} (1995), no.~2, 270--328.

\bibitem[HWZ96]{HWZ:asymptotics}
\bysame, \emph{Properties of pseudoholomorphic curves in symplectizations, {I}:
  asymptotics}, Annales de l'insitut Henri Poincar\'e, (C) Analyse Non
  Nin\'eaire \textbf{13} (1996), 337 -- 379.

\bibitem[IP03]{ionel-parker:GW}
E.~Ionel and T.~Parker, \emph{Relative {G}romov-{W}itten invariants}, Annals of
  Math. \textbf{157} (2003), 45--96.

\bibitem[Kat95]{kato}
T.~Kato, \emph{Perturbation theory for linear operators}, Classics in
  Mathematics, Springer-Verlag, Berlin, 1995, Reprint of the 1980 edition.

\bibitem[KN96]{kobayashi-nomizu2}
S.~Kobayashi and K.~Nomizu, \emph{Foundations of {D}ifferential {G}eometry,
  vol.2}, John Wiley \& Sons, New York, 1996, Wiley Classics Library Edition.

\bibitem[KOa]{kim-oh:rational}
Jongmyeong Kim and Y.-G. Oh, \emph{Rational contact instantons and {L}egendrian
  {F}ukaya category}, in preparation.

\bibitem[KOb]{kim-oh:contacton-kuranishi}
Taesu Kim and Y.-G. Oh, \emph{Kuranishi structures on contact instanton moduli
  spaces}, in preparation.

\bibitem[KOc]{kim-oh:asymp-analysis}
\bysame, \emph{Perturbation theory of asymptotic operators of contact
  instantons and of pseudoholomorphic curves on symplectization}, preprint,
  arXiv:2303.01011.

\bibitem[Kob03]{kobayashi}
S.~Kobayashi, \emph{Natural connections in almost complex manifolds,
  {E}xplorations in {C}omplex and {R}iemannian {G}eometry}, Contemp. Math.,
  vol. 332, pp.~153--169, Amer. Math. Soc., Providence, RI, 2003.

\bibitem[Kos70]{koschorke}
S.~Koschorke, \emph{Infinite dimensional {K}-theory and characteristic classes
  of {F}redholm bundle maps}, Proc. Symposia Pure Math., vol.~XV, pp.~95--133,
  AMS, Prvidence, R. I., 1970.

\bibitem[LM85]{lockhart-mcowen}
R.~Lockhart and R.~McOwen, \emph{Elliptic differential operators on noncompact
  manifolds}, Ann. Scuola Norm. Sup. Pisa Cl. Sci. (4) \textbf{12} (1985),
  no.~3, 409--447.

\bibitem[Oha]{oh:contacton-transversality}
Y.-G. Oh, \emph{Bordered contact instantons and their {F}redholm theory and
  generic transversalities}, preprint, to appear in the proceedings of Bumsig
  Kim's Memorial Conference, October 2021, KIAS, arXiv:2209.03548(v2).

\bibitem[Ohb]{oh:shelukhin-conjecture}
\bysame, \emph{Contact instantons, anti-contact involution and proof of
  {S}helukhin's conjecture}, submitted, arXiv:2212.03557.

\bibitem[Ohc]{oh:entanglement1}
\bysame, \emph{Geometry and analysis of contact instantons and entangement of
  {L}egendrian links {I}}, submitted, preprint, arXiv:2111.02597.

\bibitem[Oh15]{oh:book1}
\bysame, \emph{Symplectic {T}opology and {F}loer {H}omology. vol. 1.}, New
  Mathematical Monographs, 28., Cambridge University Press, Cambridge., 2015.

\bibitem[Oh21a]{oh:contacton-Legendrian-bdy}
\bysame, \emph{Contact {H}amiltonian dynamics and perturbed contact instantons
  with {L}egendrian boundary condition}, preprint, arXiv:2103.15390(v2), 2021.

\bibitem[Oh21b]{oh:perturbed-contacton-bdy}
\bysame, \emph{Geometric analysis of perturbed contact instantons with
  {L}egendrian boundary conditions}, preprint, arXiv:2205.12351, 2021.

\bibitem[Oh22]{oh:contacton-gluing}
\bysame, \emph{Gluing theories of contact instantons and of pseudoholomorphic
  curves in {SFT}}, preprint, arXiv:2205.00370, 2022.

\bibitem[Oh23]{oh:contacton}
\bysame, \emph{Analysis of contact {C}auchy-{R}iemann maps {III}: energy,
  bubbling and {F}redholm theory}, Bulletin of Math. Sci. \textbf{13} (2023),
  no.~1, Paper No. 2250011, 61 pp.

\bibitem[OK]{oh-kim:survey}
Y.-G. Oh and Taesu Kim, \emph{Analysis of pseudoholomorphic curves on
  symplectization: Revisit via contact instantons}, 2021-22 MATRIX Annals,
  MATRIX Book Series, (a contribution to the proceedings for the IBS-CGP and
  MATRIX workshop on Symplectic Topology, December, 2022).

\bibitem[OS23]{oh-savelyev}
Y.-G. Oh and Y.~Savelyev, \emph{Pseudoholomoprhic curves on the
  $\mathfrak{LCS}$-fication of contact manifolds}, Advances in Geometry
  \textbf{23} (2023), no.~2, 153--190.

\bibitem[OW14]{oh-wang:connection}
Y.-G. Oh and R.~Wang, \emph{Canonical connection on contact manifolds}, Real
  and Complex Submanifolds, Springer Proceedings in Mathematics \& Statistics,
  vol. 106, 2014, (arXiv:1212.4817 in its full version), pp.~43--63.

\bibitem[OW18a]{oh-wang:CR-map1}
\bysame, \emph{Analysis of contact {C}auchy-{R}iemann maps {I}: A priori
  {$C^k$} estimates and asymptotic convergence}, Osaka J. Math. \textbf{55}
  (2018), no.~4, 647--679.

\bibitem[OW18b]{oh-wang:CR-map2}
\bysame, \emph{Analysis of contact {C}auchy-{R}iemann maps {II}: Canonical
  neighborhoods and exponential convergence for the {M}orse-{B}ott case},
  Nagoya Math. J. \textbf{231} (2018), 128--223.

\bibitem[OY]{oh-yso:spectral}
Y.-G. Oh and Seungook Yu, \emph{Legendrian spectral invariants on the one jet
  bundle via perturbed contact instantons}, preprint, arXiv:2301.06704.

\bibitem[OY24]{oh-yso:index}
\bysame, \emph{Contact instantons with {L}egendrian boundary condition: a
  priori estimates, asymptotic convergence and index formula}, Internat. J.
  Math. \textbf{35} (2024), no.~07, https://doi.org/10.1142/S0129167X24500198,
  2450019.

\bibitem[Par19]{pardon:contacthomology}
John Pardon, \emph{Contact homology and virtual fundamental cycles}, J. Amer.
  Math. Soc. \textbf{32} (2019), no.~3, 825--919.

\bibitem[RS01]{robbin-salamon:asymptotic}
J.~Robbin and D.~Salamon, \emph{Asymptotic behavior of holomorphic strips},
  Ann. I. H. Poincar\'e--AN \textbf{18} (2001), 573--612.

\bibitem[Sie08]{siefring:relative}
Richard Siefring, \emph{Relative asymptotic behavior of pseudoholomorphic
  half-cylinders}, Comm. Pure Appl. Math. \textbf{61} (2008), no.~12,
  1631--1684.

\bibitem[Sie11]{siefring:intersection}
\bysame, \emph{Intersection theory of punctured pseudoholomorphic curves},
  Geom. Topol. \textbf{15} (2011), no.~4, 2351–2457.

\bibitem[Wen]{wendl:lecture}
C.~Wendl, \emph{Lectures on {S}ymplectic {F}ield {T}heory}, a book manuscript,
  arXiv:1612.01009.

\end{thebibliography}

\end{document}